\documentclass[reqno, 11pt, letterpaper]{amsart}
\usepackage{amsfonts,amsmath,amssymb,color}

\numberwithin{equation}{section}

\usepackage[kerning=true]{microtype}

\usepackage{amsthm,leqno}
\usepackage{hyperref}
\usepackage{bigints}
\usepackage{dsfont}

\usepackage{amsfonts,amsmath,amssymb,enumerate}
\usepackage{graphicx}
\usepackage{tikz}																
\usetikzlibrary{decorations.markings}
\usetikzlibrary{plotmarks}
\usetikzlibrary{patterns}

\newtheorem{theo}{Theorem}[section]
\newtheorem{prop}[theo]{Proposition}

\newtheorem{lemme}[theo]{Lemma}

\theoremstyle{remark}
\newtheorem{remark}[theo]{Remark}

\renewcommand{\(}{\left(}
\renewcommand{\)}{\right)}					

\newcommand{\be}{\begin{equation*}}
\newcommand{\ee}{\end{equation*}}
\newcommand{\ben}{\begin{equation}}
\newcommand{\een}{\end{equation}}
\newcommand{\begincal}{\begin{eqnarray*}}
\newcommand{\fincal}{\end{eqnarray*}}
\newcommand{\bal}{\begin{aligned}}
\newcommand{\eal}{\end{aligned}}

\newcommand{\pui}{\frac{n-2}{2}}

\newcommand{\ua}{u_\alpha}

\newcommand{\va}{v_\alpha}

\newcommand{\xia}{x_{i,\alpha}}
\newcommand{\xja}{x_{j,\alpha}}

\newcommand{\Via}{V_{i,\alpha}}

\newcommand{\Wia}{W_{i,\alpha}}
\newcommand{\Bia}{B_{i,\alpha}}
\newcommand{\Bja}{B_{j,\alpha}}
\newcommand{\mWa}{\mathcal{W}_{\alpha}}

\newcommand{\xa}{x_\alpha}

\newcommand{\sa}{\sigma_\alpha}

\newcommand{\ma}{\mu_\alpha}

\newcommand{\nua}{\nu_\alpha}
\newcommand{\mia}{\mu_{i,\alpha}}
\newcommand{\mja}{\mu_{j,\alpha}}

\newcommand{\ga}{\gamma_\alpha}

\newcommand{\lija}{\lambda_{i,j}^{\alpha}}
\newcommand{\ria}{r_{i,\alpha}}
\newcommand{\rja}{r_{j,\alpha}}
\newcommand{\sjia}{s_{j,i,\alpha}}
\newcommand{\sija}{s_{i,j,\alpha}}
\newcommand{\rjia}{\rho_{j,i,\alpha}}

\newcommand{\tia}{\theta_{i,\alpha}}

\newcommand{\Tia}{\Theta_{i,\alpha}}
\newcommand{\Tja}{\Theta_{j,\alpha}}
\newcommand{\tja}{\theta_{j,\alpha}}
\newcommand{\Oia}{\Omega_{i,\alpha}}
\newcommand{\Xia}{\Xi_{i,\alpha}}
\newcommand{\uoa}{u_{0, \alpha}}

\newcommand{\vea}{\varepsilon_\alpha}
\newcommand{\ea}{\eta_\alpha}
\newcommand{\ta}{\tau_\alpha}

\newcommand{\psia}{\psi_\alpha}
\newcommand{\vpa}{\varphi_\alpha}
\newcommand{\hvpa}{\hat{\varphi}_\alpha}
\newcommand{\psa}{\psi_\alpha}

\newcommand{\R}{\mathbb{R}}
\newcommand{\ve}{\varepsilon}
\newcommand{\vp}{\varphi}

\numberwithin{equation}{section}

\title[]{A priori estimates for finite-energy sign-changing blowing-up solutions of critical elliptic equations}

\author{Bruno Premoselli}

\address{Bruno Premoselli, Universit\'e Libre de Bruxelles, Service d'Analyse CP 214, Boulevard du Triomphe, B-1050 Bruxelles, Belgique.}
\email{bruno.premoselli@ulb.be}

\begin{document}

\begin{abstract}
We prove sharp pointwise blow-up estimates for finite-energy sign-changing solutions of critical equations of Schr\"odinger-Yamabe type on a closed Riemannian manifold $(M,g)$ of dimension $n \ge 3$. This is a generalisation of the so-called $C^0$-theory for positive solutions of Schr\"odinger-Yamabe type equations. To deal with the sign-changing case we develop a method of proof that combines an \emph{a priori} bubble-tree analysis with a finite-dimensional reduction, and reduces the proof to obtaining sharp \emph{a priori} blow-up estimates for a linear problem. 
\end{abstract}

\maketitle

\section{Introduction and statement of the results}

\subsection{Pointwise estimates for finite-energy blowing-up sequences of solutions}

Let $(M,g)$ be a smooth and closed Riemannian manifold of dimension $n \ge 3$, where closed means compact without boundary. Let $(h_\alpha)_\alpha$ be a sequence of bounded functions in $M$. We are interested in this work in the following Schr\"odinger-Yamabe type equation in $M$:
\[ \triangle_g u + h_\alpha u = |u|^{2^*-2}u, \]
 where $\triangle_g = - \textrm{div}_g(\nabla \cdot)$ is the Laplace-Beltrami operator and $2^* = \frac{2n}{n-2}$ is the critical exponent for the embedding of the Sobolev space $H^1(M)$ into Lebesgue spaces. This equation originated a vast amount of work and has been the target of active investigation for decades. 
 
\medskip
 
Let $(\ua)_\alpha$, with $\ua \in H^1(M)$, be a sequence of solutions of
\ben \label{eqha}
\triangle_g \ua + h_\alpha \ua = |\ua|^{2^*-2} \ua 
\een
in $M$. Unless specified otherwise, the $\ua$'s are allowed to change sign. Since $h_\alpha$ is bounded the regularity result of Trudinger \cite{Trudinger} ensures that $\ua \in C^{1,\beta}(M)$ for all $0 < \beta < 1$. Following standard terminology we say that $(\ua)_\alpha$ \emph{blows-up with finite energy} if 
\ben \label{blowup}
 \Vert \ua \Vert_{H^1(M)} \le C  \quad \textrm{ and } \quad \lim_{\alpha \to + \infty} \Vert \ua \Vert_{L^\infty(M)} = + \infty \een
for some $C >0$ independent of $\alpha$. The asymptotic behavior of $(\ua)_\alpha$ in the energy space $H^1(M)$ has been known since Struwe's seminal work \cite{Struwe}: assume e.g. that $h_\alpha$ converges in $L^2(M)$ towards $h$. Then, up to a subsequence, $(\ua)_\alpha $ decomposes as 
\ben \label{struwe}
 \ua = u_0 + \sum_{i=1}^k \Via + \phi_\alpha,
 \een
where $\Vert \phi_\alpha \Vert_{H^1(M)} \to 0$ as $\alpha \to + \infty$, $u_0$ is a weak solution of the limiting equation
\[\triangle_g u_0 + h u_0 = |u_0|^{2^*-2} u_0 \]
in $M$, $k\ge 1$ is an integer and, for all $1 \le i \le k$, $\Via$ is a \emph{sign-changing bubble}. More precisely, there exists a nontrivial, possibly sign-changing finite-energy solution $V_i$ of the euclidean Yamabe equation in $\R^n$:
\be
\triangle_{\xi} V_i = |V_i|^{2^*-2} V_i
\ee
and there exist a sequence $(\xia)_\alpha$ of points of $M$ and a sequence $(\mia)_\alpha$ of positive numbers with $\mia \to 0$ as $\alpha \to + \infty $ such that 
\[ \Via(x) \approx \mia^{1 - \frac{n}{2}} V_i \Big( \frac{1}{\mia} \exp_{\xia}^{-1}(x) \Big) \]
at first order, where $\exp_{\xia}$ is the exponential map for $g$ at $\xia$ (for the precise expression of $\Via$ see \eqref{Via} below). The sequences $(\xia)_\alpha$ and $(\mia)_\alpha$, $1 \le i \le k$, moreover satisfy (see e.g. Bahri-Coron \cite{BahriCoron} and Solimini \cite{Solimini})
\ben \label{structure}
\mia \to 0 \quad \textrm{ and } \quad \frac{\mia}{\mja} + \frac{\mja}{\mia} + \frac{d_g(\xia, \xja)^2}{\mia \mja} \to + \infty  \quad  \textrm{ for all  } i \neq j
\een
as $\alpha \to + \infty$, we have $\ua \rightharpoonup u_0$ in $H^1(M)$ and the energy of $\ua$ is quantified:
\[ \Vert \ua \Vert_{H^1(M)}^2 =  \Vert u_0 \Vert_{H^1(M)}^2 + \sum_{i=1}^k   \Vert \nabla V_i \Vert_{L^2(\R^n)}^2 + o(1) \]
as $\alpha \to + \infty$. When all the functions $\ua$ are assumed to be \emph{positive} the profiles $\Via$ are equal to the so-called positive standard bubbles 
\[
 \Bia(x) = \frac{\mia^\pui}{\big(\mia^2 + \frac{d_{g}(\xia,x)^2}{n(n-2)} \big)^{\pui}}, \quad x \in M.
\]
The $\Bia$'s carry the same energy up to an error term and, when $\ua \ge 0$, the energy of $\ua$ is quantified as 
 \[ \Vert \ua \Vert_{H^1(M)}^2 =  \Vert u_0 \Vert_{H^1(M)}^2 + k E_n  + o(1), \]
where $E_n$ is a dimensional constant (see Section \ref{introyamabe} below for more details). In the sign-changing regime the situation is more intricate as we do not have one sole energy level for the $V_i$'s (see e.g. Ding \cite{Ding}). We let, for all $x \in M$, 
\be
B_{0,\alpha}(x)  = \left \{ \bal & 0 & \textrm{ if } u_0 \equiv 0 \textrm{ and } \ker(\triangle_g +h) = \{0\} \\ & 1 & \textrm{ otherwise} \eal \right.  ,\\
\ee
where we keep the notations of \eqref{struwe}.

\medskip

An important issue is to know whether the description \eqref{struwe} holds true in the $C^0$ space, and thus whether or not pointwise estimates on $(\ua)_\alpha$ inherited from \eqref{struwe} hold. This has been known to hold true for positive solutions of \eqref{eqha} for almost two decades and has had spectacular applications, but no analogous results were obtained in the sign-changing case. In this work we answer this question positively when $\ua$ is allowed to change sign. Our main result is as follows : 
\begin{theo} \label{theorieC0}
Let $(M,g)$ be a closed Riemannian manifold of dimension $n \ge 3$, let $(h_\alpha)_\alpha$ be a sequence of functions converging in $L^\infty(M)$ to some function $h$ in $M$ and let $(\ua)_\alpha$ be a sequence of solutions of \eqref{eqha} that satisfies \eqref{blowup}. Let $u_0$ and $V_{1,\alpha}, \dots, V_{k,\alpha}$ be as in the Struwe decomposition \eqref{struwe} of $\ua$, where the $\Via$'s are given by \eqref{Via} below. Then, up to passing to a subsequence for $(\ua)_\alpha$, we have
\[
\frac{\ua - u_0 - \sum_{i=1}^k \Via}{\sum_{i=0}^k \Bia } \to 0 \]
in $L^\infty(M)$ as $\alpha \to + \infty$.
\end{theo}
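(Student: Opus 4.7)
The plan is to argue by contradiction, combining a pointwise a priori bubble-tree analysis with a Lyapunov--Schmidt type projection onto the kernel of the linearized operator at each sign-changing limit bubble. Set
\[ \phi_\alpha := u_\alpha - u_0 - \sum_{i=1}^k V_{i,\alpha}, \qquad N_\alpha := \sup_{x \in M} \frac{|\phi_\alpha(x)|}{\sum_{i=0}^k B_{i,\alpha}(x)}, \]
and assume, toward contradiction and along a subsequence, that $N_\alpha \ge \varepsilon_0 > 0$; let $y_\alpha \in M$ be a point where the supremum is essentially attained. The goal is to rescale $\phi_\alpha$ at the appropriate scale near $y_\alpha$, extract a limit profile $\psi_\infty$ on $\R^n$ belonging to the kernel of the linearized Yamabe operator at the relevant limit bubble, enforce that $\psi_\infty$ is also orthogonal to that kernel, and conclude $\psi_\infty \equiv 0$, contradicting the normalization at $y_\alpha$.

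The first step is a refined a priori bubble-tree analysis of $\phi_\alpha$. Using \eqref{struwe}, the separation condition \eqref{structure}, and elliptic regularity applied directly to \eqref{eqha}, one orders the bubbles hierarchically by scale $\mu_{i,\alpha}$ and partitions $M$ into influence regions of each bubble. On each region, Green's representation for $\triangle_g + h_\alpha$ and Moser-type estimates propagate pointwise control, yielding quantitative bounds of the form $|u_\alpha|(x) \lesssim \sum_{i=0}^k B_{i,\alpha}(x)$ together with matching upper bounds for $|\phi_\alpha|$ outside arbitrarily small neighborhoods of the concentration points. This replaces the maximum-principle and Harnack arguments available in the positive case, which cannot be applied directly to sign-changing profiles. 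In parallel, one performs the finite-dimensional reduction: at each limit bubble $V_i$ the linearized operator $L_i := \triangle_{\xi} - (2^*-1)|V_i|^{2^*-2}$ has a nontrivial kernel containing at least the translations $\partial_j V_i$ and the dilation generator $\tfrac{n-2}{2} V_i + y \cdot \nabla V_i$. Letting $\{Z_{i,j,\alpha}\}_j$ be rescaled and localized kernel elements on $M$, choose Lagrange coefficients $\lambda_{i,j}^\alpha$ so that
\[ \tilde\phi_\alpha := \phi_\alpha - \sum_{i,j} \lambda_{i,j}^\alpha Z_{i,j,\alpha} \]
is $H^1$-orthogonal to $\mathrm{span}\{Z_{i,j,\alpha}\}$. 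Testing \eqref{eqha} against each $Z_{i,j,\alpha}$ and expanding via the bubble-tree bounds shows $|\lambda_{i,j}^\alpha| = o\bigl(N_\alpha\, \bigl(\sum_{i=0}^k B_{i,\alpha}\bigr)(y_\alpha)\bigr)$, so the correction does not disturb the normalization at $y_\alpha$.

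After identifying the bubble $i_0$ whose influence zone contains $y_\alpha$ and its effective scale, define
\[ \psi_\alpha(y) := \frac{\mu_{i_0,\alpha}^{\frac{n-2}{2}}}{N_\alpha}\, \tilde\phi_\alpha\bigl(\exp_{x_{i_0,\alpha}}(\mu_{i_0,\alpha}\, y)\bigr). \]
The equation satisfied by $\psi_\alpha$ converges coefficient-wise to $L_{i_0} \psi_\infty = 0$, while the pointwise bounds from Step 1 force $\{\psi_\alpha\}$ to be locally $C^1$-precompact and pointwise bounded by a multiple of $(1+|y|)^{-(n-2)}$. The limit $\psi_\infty$ therefore sits in the $L^{2^*}$-kernel of $L_{i_0}$, while the orthogonality conditions passed to the limit from the reduction step force it to be orthogonal to this kernel, so $\psi_\infty \equiv 0$. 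This contradicts $|\psi_\infty(y_\infty)| \ge \varepsilon_0$ at the limit point $y_\infty$ of the rescaled $y_\alpha$'s. Cases where $y_\alpha$ sits between bubble scales, or far from all concentration points, are handled analogously by rescaling at the relevant intermediate or unit scale, with the role of $L_{i_0}$ played respectively by a null operator or by $\triangle_g + h - (2^*-1)|u_0|^{2^*-2}$.

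The main obstacle is the interplay between the pointwise bubble-tree analysis and the orthogonality imposed by the reduction. In the positive case, the strong maximum principle and Harnack inequality yield pointwise control on $u_\alpha$ and on $\phi_\alpha$ independently of any orthogonality condition. For sign-changing profiles none of these tools is available, and one is forced to work systematically modulo the approximate kernel $\mathrm{span}\{Z_{i,j,\alpha}\}$ since the operators $L_{V_{i,\alpha}}$ are not invertible even on suitably weighted spaces. Establishing the pointwise bubble-tree bounds in a form compatible with the orthogonality conditions, while simultaneously tracking the hierarchical scale structure dictated by \eqref{structure}, is where the bulk of the technical work lies.
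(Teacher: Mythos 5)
There is a genuine gap: your argument assumes the hard part. You set $N_\alpha = \sup_M |\phi_\alpha|/\sum_{i=0}^k \Bia$ and blow up at a point where the supremum is essentially attained, but nothing in the proposal shows that $N_\alpha$ is bounded, and without that the linearization of $|u_\alpha|^{2^*-2}u_\alpha$ around $u_0+\sum_i \Via$ in the equation for $\psi_\alpha$ is not justified (the rescaled nonlinear remainder carries factors of $N_\alpha^{2^*-2}$). The claim that ``Green's representation and Moser-type estimates propagate pointwise control, yielding $|u_\alpha|\lesssim \sum_i \Bia$'' is precisely the statement that is not available for sign-changing solutions and is essentially equivalent to the theorem; it is the content the paper has to establish, and it cannot be extracted from \eqref{eqha} by standard elliptic estimates alone. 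Moreover, even granting $|\phi_\alpha|\le N_\alpha\sum_i\Bia$ (true by definition), the single blow-up at $y_\alpha$ fails when $y_\alpha$ lies in a neck region or near a higher bubble nested inside the influence region of $V_{i_0,\alpha}$: there the normalization $\mu_{i_0,\alpha}^{\frac{n-2}{2}}\sum_i\Bia(y_\alpha)$ degenerates or the rescaled point escapes to infinity (or to a point singularity of the limit domain), so the limit $\psi_\infty\equiv 0$ produces no contradiction. Handling exactly this requires the top-down induction on the bubble tree with the radii $\ria$, $\rjia$ and the iterated representation formulas (Proposition \ref{recurrence} and Lemma \ref{lemmeestnorme}), including the $C^1$ control needed to keep the normalization alive when the maximum migrates toward a removable singularity; your sketch does not supply any of this, and the passage to the limit of the orthogonality relations likewise needs the uniform $(1+|y|)^{2-n}$ decay that only the induction provides. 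Finally, your bound ``$|\lambda_{i,j}^\alpha|=o(N_\alpha(\sum_i\Bia)(y_\alpha))$'' conflates projection coefficients of $\phi_\alpha$ onto $K_\alpha$ (controlled by $\Vert\phi_\alpha\Vert_{H^1}=o(1)$) with Lagrange multipliers obtained by testing the equation; as written it is not even dimensionally meaningful.

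The paper's route is structurally different at the decisive point and you should compare with it: the pointwise information is never obtained by blowing up $\phi_\alpha$ directly. Instead, Theorem \ref{proplin} is an \emph{a priori} pointwise estimate for the linear problem with prescribed right-hand side, proved by the bubble-tree induction; Proposition \ref{nonlin2} then runs a contraction in a weighted $C^0$ space to produce a solution of the projected equation \eqref{eqnonlin} satisfying explicit pointwise bounds, and Proposition \ref{nonlin1} gives uniqueness among $H^1$-small corrections, so the actual Struwe remainder (after absorbing its $K_\alpha$-component into $\mWa$, Lemma \ref{lemmeerreur}) must coincide with the pointwise-controlled solution — this uniqueness identification is the mechanism that replaces your unsubstantiated a priori bound. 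Even then the resulting estimate \eqref{C0ps1} is weaker than the theorem (it contains the $\Tia\Bia$ terms), and a final Green's-function computation using \eqref{caracl} in the proof of Theorem \ref{theopsC0} is needed to upgrade it to $u_\alpha=u_0+\sum_i\Via+o\big(\sum_{i=0}^k\Bia\big)$. None of these three steps (linear a priori theory, fixed point plus uniqueness, final representation-formula upgrade) is present in your proposal.
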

The statement of Theorem \ref{theorieC0} reformulates as follows: if $(\ua)_\alpha$ is a sequence of solutions of \eqref{eqha} that blows-up with finite-energy then, up to passing to a subsequence for $(\ua)_\alpha$, for any sequence $(\xa)_\alpha$ of points of $M$
\[ \ua(\xa) = u_0(\xa) + \sum_{i=1}^k \Via(\xa) + o \Big( \sum_{i=0}^k \Bia(\xa) \Big)\]
holds as $\alpha \to + \infty$. In other words, the difference between $\ua$, the weak limit and the whole set of (possibly sign-changing) bubbles in Struwe's decomposition is globally pointwise small with respect to the sum of the positive bubbles. In Theorem \ref{theorieC0} no assumption on $h$ is required: in particular one does not need to assume that $\triangle_g + h$ has no kernel. No assumption on the limiting solutions $V_i$, $1\le i \le k$, of the Yamabe equation in $\R^n$ is made either.

\medskip

Theorem \ref{theorieC0} is the first comprehensive result of this type for finite-energy \emph{sign-changing} solutions of \eqref{eqha} in dimensions $n \ge 3$. For \emph{positive} solutions, however, the subject has a rich history, and pointwise blow-up estimates have been known for decades to be a fundamental technical tool to prove both existence results and \emph{a priori} bounds. 

If no bound on the energy is assumed, sharp blow-up estimates for positive solutions of \eqref{eqha} have been used to prove compactness and stability results. A remarkable success is the compactness of positive solutions of the Yamabe equation on manifolds which are not conformally diffeomorphic to the standard sphere, that corresponds to the case $h_\alpha \equiv \frac{n-2}{4(n-1)} S_g$, where $S_g$ is the scalar curvature of $(M,g)$: see Li-Zhu \cite{LiZhu}, Druet \cite{DruetJDG}, Marques \cite{Marques}, Li-Zhang \cite{LiZhang}, Khuri-Marques-Schoen \cite{KhuMaSc} for compactness statements until dimension $24$ and Brendle \cite{Brendle} and Marques-Brendle \cite{BrendleMarques} for counter-examples when $n \ge 25$. We also mention the stability result for positive solutions of \eqref{eqha} of Druet \cite{DruetYlowdim} when $n \ge 3$ and $h < \frac{n-2}{4(n-1)} S_g$ in $M$ (for the definition of what we mean by ``stability'' here see Hebey \cite{HebeyZLAM}). Global degree arguments based on a sharp pointwise blow-up description of positive solutions have also been used to prove involved existence and compactness results for the Nirenberg problem on the sphere, see e.g. the celebrated results of Chen-Lin \cite{ChenLinMP} and Li \cite{LiSn2}. It is worth mentioning that in most of these results one needs to prove, in the course of the analysis, that possible concentration points are isolated, thus reducing the problem to understanding isolated one-bubble blow-up. 

This is however specific to the framework of the aforementioned results: even under the assumption \eqref{blowup} of finite-energy blow-up, and as the counter-examples of Robert-V\'etois \cite{RobertVetois2} show, concentration points for sequences $(\ua)_\alpha$ of positive solutions of \eqref{eqha} need not be isolated. One then needs a pointwise description like that of Theorem \ref{theorieC0} to describe the interactions between non-isolated bubbling profiles. Theorem \ref{theorieC0} for \emph{positive} solutions of \eqref{eqha} satisfying \eqref{blowup} was first proven in Druet-Hebey-Robert \cite{DruetHebeyRobert}, and the proof was subsequently shortened in Druet-Hebey \cite{DruetHebey2} (see also Hebey \cite{HebeyZLAM}). It was then successfully applied to obtain \emph{a priori} bounds for finite-energy solutions of \eqref{eqha}. Druet \cite{DruetJDG} proved for instance that, if $n=4,5$ or $n \ge 7$, $h_\alpha \to h$ in $C^1(M)$ and $h$ satisfies $h(x) \neq \frac{n-2}{4(n-1)} S_g(x)$ for all $x \in M$, every positive finite-energy sequence of solutions of \eqref{eqha} is uniformly bounded in $C^2(M)$ (for details on the six-dimensional case see Druet-Hebey \cite{DruetHebey2}).

 The case $h_\alpha \equiv h >  \frac{n-2}{4(n-1)} S_g$ is worth mentioning: no finite-energy positive blowing-up solutions exist by Druet's \cite{DruetJDG} result in this case but sequences of positive solutions of \eqref{eqha} whose energy goes to infinity have been constructed on $\mathbb{S}^n$ by Chen-Wei-Yan \cite{ChenWeiYan} for $n \ge 5$ (see also V\'etois-Wang \cite{VetoisWang} for $n=4$). This intermediate case shows the relevance of developing analytical tools like Theorem \ref{theorieC0} to specifically deal with the finite-energy case. Let us also mention that analogous results to Theorem \ref{theorieC0} exist in dimension $2$ for positive solutions of Liouville-type and Moser-Trudinger-type equations, and have been used to obtain existence and quantification results. See for instance Chen-Lin \cite{ChenLin}, Li-Shafrir \cite{LiShafrir}, Druet-Thizy \cite{DruetThizy} and the references therein.

\medskip

For \emph{sign-changing} solutions of \eqref{eqha} the situation is much less understood. If no bound on the energy is assumed the situation is dimmer than for positive solutions. As shown by Premoselli-V\'etois \cite{PremoselliVetois}, one can only expect compactness in dimensions $n \ge 7$, when $h < \frac{n-2}{4(n-1)}S_g$ and when the negative part of the sequence $(\ua)_\alpha$ is a priori bounded. The counterpart of Druet \cite{DruetYlowdim} fails for sign-changing solutions in all the other cases, as proven by Ding \cite{Ding} for the Yamabe equation, Premoselli-V\'etois \cite{PremoselliVetois} and V\'etois \cite{Vetois}. The situation for sign-changing finite-energy blow-up is also considerably more involved than in the positive case: finite-energy sign-changing solutions of the Yamabe equation in $\R^n$ are not classified, hence quantification of the energy is lost, and pointwise interactions between bubbles are weaker due to stronger decay at infinity. Few sign-changing finite-energy compactness results are known and this was mainly due, up to now, to the lack of precise pointwise blow-up estimates as those of Theorem \ref{theorieC0}. We mention for instance the finite-energy compactness result of V\'etois \cite{Vetois}  when $n\ge 7$, $h < \frac{n-2}{4(n-1)}$ and $(M,g)$ is locally conformally flat, that relies on arguments from Devillanova-Solimini \cite{DevillanovaSolimini} in the Euclidean case. We also mention Ghoussoub-Mazumdar-Robert \cite{GhoussoubMazumdarRobert}, where the authors obtain sharp pointwise blow-up estimates for sign-changing solutions of Hardy-Schr\"odinger equations with boundary singularity, a case in which the bubbles are centered at the singularity. 

Theorem \ref{theorieC0} paves the way for more sophisticated sign-changing compactness results. Similar techniques have recently been applied to obtaining quantitative stability results for Struwe's decomposition for positive Palais-Smale sequences, see Figalli-Glaudo \cite{FigalliGlaudo} and Deng-Sun-Wei \cite{DengSunWei} in the positive case. It would be interesting to see if the approach of Theorem \ref{theorieC0} can be applied to obtain analogous quantitative stability results in the sign-changing case.

\subsection{Strategy of proof and organisation of the paper}

To prove Theorem \ref{theorieC0} we develop a new strategy of proof, different from those in Druet-Hebey-Robert \cite{DruetHebeyRobert}, Druet-Hebey \cite{DruetHebey2} and Ghoussoub-Mazumdar-Robert \cite{GhoussoubMazumdarRobert}, that allows us to overcome the technical challenges inherent to the sign-changing setting, such as the failure of Harnack inequalities and maximum principles or the presence of the kernel of the limiting operator $\triangle_g +h$. Our proof combines a bubble-tree analysis with a finite-dimensional reduction. The latter is in the spirit of Ambrosetti-Malchiodi \cite{AmbrosettiMalchiodi2, AmbrosettiMalchiodi}, Bahri \cite{Bahri}, Rey \cite{Rey}, Rey-Wei \cite{ReyWei} or Wei \cite{WeiGM}.

The paper is organised as follows. In Section \ref{introyamabe} we recall a few results on sign-changing solutions of the Yamabe equation in $\R^n$ and introduce the bubbling profiles we will consider. Section \ref{linear} is the core of the analysis of the paper. We linearize equation \eqref{eqha} at an approximate solution that is close to $u_0 + \sum_{i=1}^k \Via$ (more general configurations have to be taken into account) and we prove Theorem \ref{proplin} below. Theorem \ref{proplin} essentially says that solutions of the linearized equation inherit explicit pointwise estimates whose precision depends on the \emph{pointwise} precision of the error of the approximate solution $u_0 + \sum_{i=1}^k \Via$.  It can be seen as a linearised version of Theorem \ref{theorieC0}. The proof of Theorem \ref{proplin} goes through an \emph{a priori} bubble-tree analysis, where sharp estimates are obtained by an inductive bootstrap procedure in the bubble-tree. It requires a full \emph{a priori} analysis and is not a simple invertibility result in weighted spaces.  In Section \ref{nonlinear} we first perform a nonlinear perturbative argument (Proposition \ref{nonlin2}) and prove that \eqref{eqha} can be uniquely solved, up to kernel elements, by a function that satisfies explicit pointwise estimates; the analysis takes place in strong spaces and heavily relies on Theorem \ref{proplin}.  As a consequence we prove a slightly more general result than Theorem \ref{theorieC0}, Theorem \ref{theopsC0} below, that shows that approximate solutions of \eqref{eqha} obtained by Lyapunov-Schmidt-like techniques satisfy the same pointwise bounds as in Theorem \ref{theorieC0}. This is a result which if of interest in itself. Finally the Appendix collects a few technical results used throughout the paper. 

\medskip

The method of proof that we develop in Sections \ref{linear} and \ref{nonlinear} is particularly robust and applies to more general equations than \eqref{eqha}. The invertibility result of Section \ref{linear}, Theorem \ref{proplin}, allows us to bypass most of the technical difficulties that arise when proving blow-up pointwise estimates for critical nonlinear equations, since it only requires a representation formula for a linear problem. The approach developed in this paper adapts for instance to higher-order critical nonlinear equations involving linear operators that look like $(\triangle_g)^p$ for some integer $p \ge 1$ at first-order. It also seems particularly suited to deal with highly nonlinear equations.

\medskip
\noindent {\bf Acknowledgments:} The author was supported by a FNRS CdR grant J.0135.19, by the Fonds Th\'elam and by an ARC Avanc\'e 2020 grant.

\section{Preliminary material} \label{introyamabe} 

\subsection{Finite-energy nodal solutions of the Yamabe equation in $\R^n$.}

Let $n \ge 3$ and denote by $D^{1,2}(\R^n)$ the completion of $C^\infty_c(\R^n)$ with respect to the norm $u \mapsto \int_{\R^n} |\nabla u|^2 dx$, endowed with the associated scalar product. Let $V \in D^{1,2}(\R^n)$ be a solution of the nodal Yamabe equation in $\R^n$:
\ben \label{yamabe}
\triangle_{\xi} V = |V|^{2^*-2} V,
\een
where $\xi$ is the Euclidean metric and $\triangle_\xi = - \sum_{i=1}^n \partial_i^2$. Unless specified otherwise $V$ is allowed to change sign. If $V$ is positive it is equal by Obata \cite{Obata}, up to translations and rescalings, to the so-called positive standard bubble
\ben \label{B0}
 B_0(x) = \( 1 + \frac{|x|^2}{n(n-2)} \)^{- \pui},
 \een
and is in particular an extremal for the euclidean sharp Sobolev inequality. By Caffarelli-Gidas-Spruck \cite{CaGiSp} this result remains true for all positive solutions of \eqref{yamabe}, without the assumption that they belong to $D^{1,2}(\R^n)$. The positive standard bubbles satisfy 
\[ \int_{\R^n} |\nabla B_0|^2dx = K_n^{-n}, \]
where $K_n$, an explicit dimensional constant, is the sharp constant for the embedding of $D^{1,2}(\R^n)$ into $L^{2^*}(\R^n)$. By contrast, any sign-changing solution $V$ of \eqref{yamabe} satisfies 
\[ \int_{\R^n} |\nabla V|^2 dx > 2 K_n^{-n} \]
as is easily seen by splitting $V$ into its positive and negative part. A result of Weth \cite{Weth} shows the existence of a dimensional constant $\ve_n >0$ so that every sign-changing solution of \eqref{yamabe} satisfies  $\int_{\R^n} |\nabla V|^2 dx \ge 2 K_n^{-n} + \ve_n$. 

\medskip

We describe in this subsection the behavior at infinity for solutions of \eqref{yamabe} and of the linearized equation. We first recall the following simple result: 

\begin{lemme}
Let $V \in D^{1,2}(\R^n)$ be a solution of \eqref{yamabe}. 
\begin{itemize}
\item There exists a constant $C = C(n,V)$ such that, for all $x \in \R^n$, 
\ben \label{estV}
|V(x)| + (1+|x|)|\nabla V(x)| \le C (1 + |x|)^{2-n}.
\een
\item There exists $\lambda \in \R$ such that 
\ben \label{asymptoV}
V(x) = \frac{\lambda}{|x|^{n-2}} + O(|x|^{1-n}) \quad \textrm{ as } |x| \to + \infty,
\een
and it satisfies
\ben \label{caracl}
(n-2) \omega_{n-1} \lambda = \int_{\R^n} |V|^{2^*-2}V dx, 
\een
\end{itemize}
where $\omega_{n-1}$ is the area of the standard sphere $\mathbb{S}^{n-1}$.
\end{lemme}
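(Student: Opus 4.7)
My plan is to prove the two bullets in turn, relying on a Kelvin-transform argument for the first and on a Green's representation formula for the second.

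For \eqref{estV}, the first task is to show that $V \in L^\infty(\R^n)$. Since $V \in D^{1,2}(\R^n)$ solves \eqref{yamabe}, rewriting the equation as $\Delta_\xi V = a V$ with coefficient $a := |V|^{2^*-2} \in L^{n/2}(\R^n)$ and running a standard Brezis-Kato / Moser iteration yields $V \in L^p(\R^n)$ for all $p < \infty$, and then elliptic regularity gives $V \in L^\infty(\R^n)$. I would then introduce the Kelvin transform $\tilde V(x) := |x|^{2-n} V(x/|x|^2)$. By the conformal invariance of the Yamabe equation and of the Dirichlet integral, $\tilde V$ lies in $D^{1,2}(\R^n)$ and solves \eqref{yamabe} on $\R^n \setminus \{0\}$; since an isolated point has zero $D^{1,2}$-capacity, a removable singularity argument promotes $\tilde V$ to a weak solution on all of $\R^n$, to which the previous $L^\infty$ bound applies. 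Unscaling yields $|V(x)| \le C|x|^{2-n}$ for $|x|$ large, and combining with the $L^\infty$ bound produces the $V$-part of \eqref{estV}. The gradient control then follows by a rescaling argument: the rescaled function $V_R(y) := R^{(n-2)/2} V(Ry)$ satisfies the same equation and, thanks to the decay just proved, is uniformly bounded on unit annuli for $R \ge 1$. Classical interior $C^{1,\beta}$ estimates applied to $V_R$ and unscaling at $R \sim |x|$ give $(1+|x|)|\nabla V(x)| \le C(1+|x|)^{2-n}$.

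For \eqref{asymptoV}, the decay just obtained provides the pointwise bound $|V|^{2^*-1}(y) \le C(1+|y|)^{-(n+2)}$, since $(2^*-1)(n-2) = n+2$; in particular $|V|^{2^*-2}V \in L^1(\R^n)$. The plan is to establish the representation
$$ V(x) = \frac{1}{(n-2)\omega_{n-1}} \int_{\R^n} \frac{|V(y)|^{2^*-2} V(y)}{|x-y|^{n-2}}\, dy, $$
and then to Taylor-expand the kernel for $|x|$ large. Splitting the integration domain into $\{|y| \le |x|/2\}$, $\{|x|/2 \le |y| \le 2|x|\}$ and $\{|y| \ge 2|x|\}$, using $|x-y|^{2-n} = |x|^{2-n}\bigl(1 + O(|y|/|x|)\bigr)$ on the first region (where the convergent weighted integral $\int |y|\,|V|^{2^*-1}\,dy$ controls the error) and the decay bound on $|V|^{2^*-1}$ on the last two, each region contributes an error of size at most $O(|x|^{1-n})$, the leading $|x|^{2-n}$ term being supplied by the first region alone. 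One then reads off \eqref{asymptoV} with $\lambda = \frac{1}{(n-2)\omega_{n-1}} \int_{\R^n} |V|^{2^*-2}V\, dy$, which is exactly \eqref{caracl}.

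The main technical obstacle is the justification of the representation formula above, since one cannot simply invoke a Green identity on the whole of $\R^n$. I would argue as follows: let $\mathcal{N}$ denote the Newtonian potential of $|V|^{2^*-2} V$. The integrability and decay estimates established in the first bullet ensure that $\mathcal{N}$ is well-defined, continuous, bounded, and vanishes at infinity, and that $\Delta_\xi \mathcal{N} = |V|^{2^*-2} V$ holds weakly on $\R^n$. The difference $W := V - \mathcal{N}$ is then a weak, hence smooth, harmonic function on $\R^n$; since both $V$ and $\mathcal{N}$ tend to zero at infinity, $W$ is a bounded harmonic function on $\R^n$ vanishing at infinity, and Liouville's theorem forces $W \equiv 0$, closing the argument.
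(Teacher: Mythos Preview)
Your proof is correct and follows the same overall strategy as the paper (Kelvin transform for the decay, Green's representation for the identification of $\lambda$), but the execution differs in two places worth noting.

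For the gradient bound and for \eqref{asymptoV}, the paper extracts both directly from the Kelvin transform: once one knows that $V^*(z) = |z|^{2-n}V(z/|z|^2)$ extends to a $C^1$ function at the origin, the bound $\|V^*\|_{C^1(B(0,1))} \le C$ immediately gives \emph{both} the $V$-part and the $\nabla V$-part of \eqref{estV} for $|x| \ge 1$, and the one-term Taylor expansion $V^*(z) = V^*(0) + O(|z|)$ at $z = x/|x|^2$ yields \eqref{asymptoV} with $\lambda = V^*(0)$ in one line. Your rescaling argument for $\nabla V$ and your kernel expansion for \eqref{asymptoV} are perfectly valid but slightly longer; the paper's route avoids the three-region splitting entirely for \eqref{asymptoV} and only uses the representation formula at the very end to compute $\lambda$. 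Conversely, your Liouville argument for the representation $V = \mathcal{N}$ is a point the paper simply asserts (``we can write that\dots''), so you supply a justification the paper omits.
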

If $V$ has constant sign $\lambda = \pm (n(n-2))^{\pui}$ by \eqref{B0}.

\begin{proof}
First, by the regularity theory for critical elliptic equations (see e.g. Trudinger \cite{Trudinger} or also Hebey \cite{HebeyZLAM}, Theorem $2.15$), $V$ is smooth in $\R^n$, and \eqref{estV} is true for $x \in \overline{B(0,1)}$. For $x \neq 0$ let $V^*(x) = |x|^{2-n} V\big( \frac{x}{|x|^2}\big)$ be the Kelvin transform of $V$. It is a well-known fact that $V^*$ still satisfies \eqref{yamabe} on $\R^n \backslash \{0\}$. The Kelvin transform leaves the $L^{2^*}(\R^n)$ norm invariant and hence $V^* \in L^{2^*}(\R^n)$. Standard removal of singularities results (see e.g. the arguments in Caffarelli-Gidas-Spruck \cite{CaGiSp}, Lemma $2.1$) then show that $V^*$ is a distributional solution of \eqref{yamabe} in the whole $\R^n$ and hence belongs to $D^{1,2}(\R^n)$. Regularity theory shows again that $V^*$ smoothly extends at the origin. Hence there exists a constant $C = C(n,V) >0$ such that 
\[ \Vert V^* \Vert_{L^\infty(B(0,1))} + \Vert \nabla V^* \Vert_{L^\infty(B(0,1))} \le C, \]
which, by definition of $V^*$, proves \eqref{estV} for $x \in \R^n \backslash \overline{B(0,1)}$.   

At the origin $V^*$ moreover expands as $V^*(z) = V^*(0) + O(|z|)$. Letting $z = \frac{x}{|x|^2}$ yields \eqref{asymptoV} with $\lambda = V^*(0)$ when $|x| \to + \infty$. To prove \eqref{caracl} we write a representation formula for $V$ in $\R^n$. Let $R >0$ be fixed and choose $|x|\ge 2R$. With \eqref{estV} we can write that, for any $x \in \R^n$,
\[ \bal 
V(x) &= \int_{\R^n} \frac{1}{(n-2)\omega_{n-1}} |x-y|^{2-n} |V(y)|^{2^*-2} V(y) dy \\
& = \int_{B(0,R)} \frac{1}{(n-2)\omega_{n-1}} |x-y|^{2-n} |V(y)|^{2^*-2}V(y) dy  + O(\frac{1}{R^2} (1+|x|)^{2-n}).
\eal  \]
Multiplying by $|x|^{n-2}$ both sides of the equality and letting first $|x| \to + \infty$ and then $R \to + \infty$ proves \eqref{caracl}.
\end{proof}

\medskip

If $V\in D^{1,2}(\R^n)$ solves \eqref{yamabe} we let
\ben \label{defKV}
K_V = \Big \{ h \in D^{1,2}(\R^n), \quad \triangle_\xi h = (2^*-1) |V|^{2^*-2}h \Big \}
\een 
be the set of solutions of the linearised equation at $V$. A simple integration by parts yields $V \in K_V^{\perp}$. The following result proves decay at infinity for the elements of $K_V$:
\begin{lemme}
Let $Z \in K_V$. There exists $C = C(n,V,Z)$ such that 
\ben \label{estZ}
|Z(x)| + (1+|x|)|\nabla Z(x)| \le C (1 + |x|)^{2-n}
\een
for all $x \in \R^n$.
\end{lemme}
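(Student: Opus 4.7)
The plan is to mimic almost verbatim the Kelvin-transform argument used for $V$ in the previous lemma, exploiting the conformal invariance of the linearised Yamabe operator.

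First, observe that $Z$ satisfies a linear elliptic equation $\triangle_\xi Z = a(x) Z$ with coefficient $a(x) = (2^*-1)|V|^{2^*-2}$. By \eqref{estV} applied to $V$, $a \in L^\infty_{\textrm{loc}}(\R^n) \cap L^{n/2}(\R^n)$, and in fact $a(x) = O((1+|x|)^{-4})$. Standard elliptic regularity (Moser iteration / De Giorgi–Nash) applied on $\overline{B(0,1)}$ shows that $Z$ is Hölder continuous and bounded there, so \eqref{estZ} holds for $|x| \le 1$. It therefore suffices to establish the stated decay as $|x| \to +\infty$.

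For this I would define the Kelvin transform $Z^*(y) = |y|^{2-n} Z(y/|y|^2)$ for $y \neq 0$. A standard computation using the identity $\triangle_\xi u^*(y) = |y|^{-n-2} (\triangle_\xi u)(y/|y|^2)$ together with $|V^*(y)|^{2^*-2} = |y|^{-4} |V(y/|y|^2)|^{2^*-2}$ shows that $Z^*$ satisfies the linearised equation at $V^*$, namely
\be
\triangle_\xi Z^* = (2^*-1) |V^*|^{2^*-2} Z^* \quad \text{ in } \R^n \setminus \{0\}.
\ee
Since the Kelvin transform is an isometry of $D^{1,2}(\R^n)$, we have $Z^* \in D^{1,2}(\R^n)$. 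Moreover $V^*$ extends smoothly across the origin by the previous lemma, so the coefficient $(2^*-1)|V^*|^{2^*-2}$ is bounded on $\overline{B(0,1)}$.

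Next I would remove the singularity of $Z^*$ at the origin. Since $Z^* \in D^{1,2}(\R^n)$ and single points have zero $H^1$-capacity in dimension $n \ge 3$, the distribution $\triangle_\xi Z^* - (2^*-1)|V^*|^{2^*-2}Z^*$, which vanishes on $\R^n \setminus \{0\}$ and is tested against functions in $H^1$ supported near $0$, must vanish identically on $\R^n$ (alternatively, one can argue as in Caffarelli–Gidas–Spruck, Lemma 2.1, used in the previous proof). Linear elliptic regularity with bounded coefficient then yields that $Z^*$ is of class $C^{1,\beta}$ in $\overline{B(0,1)}$, so
\be
\Vert Z^* \Vert_{L^\infty(B(0,1))} + \Vert \nabla Z^* \Vert_{L^\infty(B(0,1))} \le C(n,V,Z).
\ee
Translating this back to $Z$ via $Z(x) = |x|^{2-n} Z^*(x/|x|^2)$ and a direct differentiation gives $|Z(x)| \le C |x|^{2-n}$ and $|\nabla Z(x)| \le C |x|^{1-n}$ for $|x| \ge 1$, which together with the interior bound yields \eqref{estZ}.

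The only mildly delicate step is the removal of singularities for $Z^*$ at the origin, since, unlike for the nonlinear Yamabe equation, one must be careful that the linear distributional equation genuinely extends across $\{0\}$; but the $D^{1,2}$ membership and zero $H^1$-capacity of a point make this routine. The rest is a direct transcription of the Kelvin-transform proof of \eqref{estV}.
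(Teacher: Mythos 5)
Your argument is correct, but it follows a different (though closely related) route from the paper. The paper does not Kelvin-transform $Z$ at infinity; instead it pulls the whole problem back to the round sphere via stereographic projection: writing $\tilde{Z} = (Z/U)\circ\pi$ with $U^{4/(n-2)}(x)=4(1+|x|^2)^{-2}$ and using the conformal invariance of the conformal Laplacian, it shows $\tilde{Z}\in H^1(\mathbb{S}^n)$ solves $\triangle_{g_0}\tilde{Z}+b\,\tilde{Z}=0$ with $b\in L^\infty(\mathbb{S}^n)$ (thanks to \eqref{estV}), so that elliptic theory on the compact sphere gives a global $C^1$ bound on $\tilde{Z}$, which translates into \eqref{estZ}; as a by-product one gets the one-to-one correspondence between $K_V$ and $H^1(\mathbb{S}^n)$-solutions of \eqref{confinv2} that the paper uses later. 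Your Kelvin-transform version is the exact analogue of the paper's proof of \eqref{estV}, and your explicit removable-singularity step (zero $H^1$-capacity of a point, $Z^*\in D^{1,2}$, bounded coefficient near $0$ since $V^*$ extends smoothly) is sound — indeed it makes explicit a point the sphere formulation glosses over at the North pole. Two small remarks: on $\overline{B(0,1)}$ you invoke only De Giorgi--Nash, which gives boundedness and Hölder continuity but not the gradient bound in \eqref{estZ}; you should say, as you do for $Z^*$, that the bounded (in fact Hölder) coefficient yields $W^{2,p}$ and hence $C^{1,\beta}$ estimates. Likewise the $C^1$ bound for $Z^*$ on $B(0,1)$ should be obtained as an interior estimate from the equation on a slightly larger ball, where the coefficient is still bounded. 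Neither point affects the validity of the argument.
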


\begin{proof}
Let, for $x \in \R^n$, $U(x)^{\frac{4}{n-2}} = 4 (1+|x|^2)^{-2}$. Let $\pi: \mathbb{S}^n \backslash \{N\} \to \R^n$ be the stereographic projection from the North pole and let, for $\vp \in D^{1,2}(\R^n)$ and for $y \in \mathbb{S}^n$, $\tilde{\vp}(y) = \frac{\vp}{U}(\pi(y))$. The conformal invariance of the conformal laplacian shows that, for any $y \in \mathbb{S}^n$, $\tilde{\vp}$ satisfies
\ben \label{confinv} \Big( \triangle_{g_0} + \frac{n(n-2)}{4} \Big)\tilde{\vp}(y) = U(\pi(y))^{1 - 2^*} \triangle_{\xi} \vp (\pi(y)) \een
where $g_0$ is the round metric on $\mathbb{S}^n$. Recall that $g_0$ satisfies $g_0 = \pi^*\big( U^{\frac{4}{n-2}} \xi)$. Let $Z \in K_V$ and consider $\tilde{Z}$. The assumption $Z \in D^{1,2}(\R^n)$ translates, by \eqref{confinv}, into
\[ \int_{\mathbb{S}^n} |\nabla \tilde{Z}|_{g_0}^2 + \frac{n(n-2)}{4} |\tilde Z|^2 dv_{g_0} = \int_{\R^n} |\nabla Z|^2 dx < + \infty, \]
so that $\tilde{Z} \in H^1(\mathbb{S}^n)$. And by \eqref{defKV} $\tilde{Z}$ satisfies
\ben \label{confinv2}  \triangle_{g_0} \tilde{Z} + b \tilde{Z}=0 \quad \textrm{ in } \mathbb{S}^n, \een
where we have let
\[ b = \frac{n(n-2)}{4} - (2^*-1) \(\frac{|V(\pi(\cdot))|}{U(\pi(\cdot))} \)^{2^*-2}. \]
Elements of $K_V$ are actually in one-to-one correspondence with solutions $\tilde{Z} \in H^1(\mathbb{S}^n)$ of \eqref{confinv2} via the transformation $Z \to \tilde{Z}$. Estimate \eqref{estV} implies $\Vert b \Vert_{L^\infty(\mathbb{S}^n)} \le C(n,V)$. Standard elliptic theory for \eqref{confinv2} then shows that $\Vert \tilde{Z} \Vert_{C^1(\mathbb{S}^n)} \le C(n,V)$. Since, for all $x \in \R^n$, $Z(x) = U(x)\tilde{Z}(\pi^{-1}(x))$, this proves \eqref{estZ}.
\end{proof}

In the following we will note $n_V  = \dim K_V$ and we will denote by $\Pi_{K_V}$ the orthogonal projection on $K_V$ in $D^{1,2}(\R^n)$. We always have $n_V >0$, and this follows e.g. from the invariance of solutions of \eqref{yamabe} by scaling. In general little more can be said on the dimension of $K_V$ as in \eqref{defKV}. It is known that for \emph{positive} solutions of \eqref{yamabe} $n_V = n+1$ holds (see Bianchi-Egnell \cite{BianchiEgnell}) and that positive solutions are non-degenerate in the sense of Duyckaerts-Kenig-Merle \cite{DuyckaertsKenigMerle}. Constructive examples of non-degenerate nodal solutions of \eqref{yamabe} whose value of $n_V$ is known  have been obtained in Del Pino-Musso-Pacard-Pistoia \cite{DelPinoMussoPacardPistoia1, DelPinoMussoPacardPistoia2}, Medina-Musso \cite{MedinaMusso}, Medina-Musso-Wei \cite{MedinaMussoWei} and Musso-Wei \cite{MussoWei}. No example of \emph{degenerate} solution of \eqref{yamabe} is known to this day, but the existence of such solutions has not been ruled out.

\subsection{Riemannian sign-changing bubbles} \label{sectionbulles}

We let until the end of this section $(M,g)$ be a closed manifold (that is, compact without boundary) of dimension $n \ge 3$ and let $h \in L^\infty(M)$. Let $(\xa)_\alpha$ be a sequence of points in $M$ and $(\ma)_{\alpha}$ be a sequence of positive numbers and let $V \in D^{1,2}(\R^n)$ be a solution of \eqref{yamabe}. Let $\lambda \in \R$ be given by \eqref{asymptoV}. Let $\chi \in C^\infty_c(\R)$ be such that $\chi \equiv 1$ in $[0,\frac{i_g}{2}]$ and $\chi \equiv 0$ in $\R \backslash [0,i_g]$, where $i_{g}$ denotes the injectivity radius of $g$. If $\triangle_g + h$ has no kernel let $G_h$ denote the Green's function of $\triangle_g +h$ (see Aubin \cite{Aub} or Robert \cite{RobDirichlet}) and define 
\[ F(\xa, x) =  (n-2)\omega_{n-1} d_g(\xa, x)^{n-2} G_h(\xa, x). \]
We define a riemannian sign-changing bubble $V^{\ma,\xa}$ on $M$, modeled on $V$, as follows: 
\ben \label{Vxa}
\bal
\bullet \textrm{ If } \ker(\triangle_g + h) &= \{0\}: \\
 V^{\ma, \xa}(x) & =  \chi \big( d_{g}(\xa, x) \big)F(\xa, x) \ma^{-\pui}V \Big( \frac{1}{\ma} \exp_{\xa}^{-1}(x) \Big)  \\
 & + \big(1 -  \chi \big( d_{g}(\xa, x) \big) \big) (n-2)\omega_{n-1} \lambda \ma^{\pui} G_h(\xa, x) \\
\bullet \textrm{ If } \ker(\triangle_g + h) &\neq \{0\}: \\
V^{\ma, \xa}(x) & = \chi \big( d_{g}(\xa, x) \big) \ma^{-\pui} V \Big( \frac{1}{\ma} \exp_{\xa}^{-1}(x) \Big) 
\eal
\een
for all $x \in M$, where $\exp_{\xa}$ denotes the exponential map at $\xa$ for $g$. We also let, for $x \in M$,
\ben \label{Ba}
B_{\ma,\xa}(x) = \frac{\ma^\pui}{\big(\ma^2 + \frac{d_{g}(\xa,x)^2}{n(n-2)} \big)^{\pui}}
\een
be the Riemannian positive standard bubble centered at $\xa$. It is easily seen with \eqref{estV} and \eqref{Vxa} that we have 
\ben \label{contVa}
 |V^{\ma,\xa}(x)| + (\ma + d_g(\xa, x)) |\nabla V^{\ma,\xa}(x)| \le C B_{\ma,\xa}(x) 
 \een
for all $x \in M$, where $C$ only depends on $n,(M,g)$ and $V$. Let $(h_\alpha)_{\alpha}$ be a sequence of functions in $M$ that converges towards $h$ in $L^\infty(M)$. Straightforward computations using \eqref{Vxa} show that there exists $C=C(n,g,V,h) >0$ such that 
\ben \label{errVxa}
\left| \triangle_g V^{\ma,\xa} + h_\alpha V^{\ma,\xa} - |V^{\ma,\xa}|^{2^*-2} V^{\ma,\xa}  \right| \le C B_{\ma,\xa}
\een
everywhere in $M$ (see e.g. Esposito-Pistoia-V\'etois \cite{EspositoPistoiaVetois}, proposition 2.2). Let $k \ge 1$ be an integer and let $(\xia)_{\alpha}$, $1 \le i \le k$, be $k$ sequence of points of $M$ and  $(\mia)_\alpha$, $1 \le i \le k$ be  $k$ sequences of positive real numbers. We will let throughout this paper:
\ben \label{Via}
\Via = V_i^{\mia, \xia}, 
\een
where $V_i^{\mia, \xia}$ is given by \eqref{Vxa}. 

\subsection{Kernel elements} We endow $H^1(M)$ with the scalar product
\[ \langle u, v \rangle_{H^1(M)} = \int_M \Big( \langle \nabla u, \nabla v \rangle + u v \Big) dv_g. \]
Let $h \in L^\infty(M)$ and let $u_0 \in C^{1,\beta}(M)$, $0 < \beta < 1$, be a solution of
\ben  \label{limiteq}
\triangle_g u_0 + h u_0 = |u_0|^{2^*-2} u_0 .
\een
Let $k \ge 1$, let $V_1, \dots, V_k$ be solutions of \eqref{yamabe} and let $(\xia)_{\alpha}$ and $(\mia)_\alpha$, $1 \le i \le k$, be $k$ sequences respectively of points in $M$ and of positive real numbers with $\mia \to 0$ as $\alpha \to + \infty$ satisfying \eqref{structure}. 

\medskip

For each $1 \le i \le k$ let $n_i = \textrm{dim} K_{V_i}$ where $K_{V_i}$ is as in \eqref{defKV} and let $Z_{i,1}, \dots, Z_{i, n_i}$ be an orthonormal basis of $K_{V_i}$. As before let $\chi \in C^\infty_c(\R)$ satisfy $\chi \equiv 1$ in $[0,\frac{i_g}{2}]$ and $\chi \equiv 0$ in $\R \backslash [0,i_g]$. For any $\alpha \ge 1$, $1 \le i \le k$ and $1 \le j \le n_i$ we let, for $x \in M$,
\ben \label{defZij}
 Z_{i,j,\alpha} (x)=   \chi ( d_g(\xia, x) ) \mia^{-\pui} Z_{i,j}\Big( \frac{1}{\mia} \exp_{\xia}^{-1}(x) \Big)  \een
and 
\[ K_{i,\alpha} = \textrm{Span}\big \{  Z_{i,j,\alpha} , 1 \le j \le n_i\big \} \] 
which is a subset of $H^1(M)$. We also let 
 \ben \label{defK0}
  K_0 = \Big \{ v \in H^1(M), \quad \triangle_g v + h v = (2^*-1)|u_0|^{2^*-2} v \Big \}, 
  \een
 and we let $(Z_{0,j})_{1 \le j \le n_0}$ be an orthonormal basis of $K_0$ for the $H^1(M)$ scalar product. In case $u_0 \equiv 0$, $K_0$ coincides with the kernel of $\triangle_g + h$. Let finally, for $\alpha \ge 1$,
 \ben \label{defKa}
 K_\alpha = K_0 \oplus \bigoplus_{i=1}^k K_{i,\alpha}.
 \een
 If follows from \eqref{defZij} that $Z_{i,j,\alpha} \rightharpoonup 0$ in $H^1(M)$ as $\alpha \to + \infty$. Using \eqref{orthoZij} below we then obtain that, up to passing to a subsequence for $(\xia)_\alpha$ and $(\mia)_\alpha$, $1 \le i \le k$, $K_\alpha$ is a linear subset of $H^1(M)$ of dimension $\sum_{i=0}^k n_i$. Note that $n_0 = \dim K_0$ may be equal to $0$, in which case our convention is that the condition $1 \le j \le n_0$ is empty.

\section{Linear blow-up analysis} \label{linear}

In this section we linearise equation \eqref{eqha} around the first-order terms $u_0 + \sum_{i=1}^k \Via$ in the Struwe decomposition \eqref{struwe}. The main result of this section, Theorem \ref{proplin}, establishes \emph{a priori} pointwise estimates for solutions of the linearised equation. This result holds true for any possible bubbling configuration.

\subsection{Linear blow-up analysis}

Let
\ben \label{defE}
\mathcal{E} = \big \{ (\vea)_{\alpha \ge 1} \textrm{ such that } 0 < \vea \le 1  \textrm{ for all } \alpha \ge 1 \textrm{ and } \lim_{\alpha \to + \infty} \vea = 0  \big\}.
\een
Let $(h_\alpha)_\alpha$ be a sequence of functions converging towards $h$ in $L^\infty(M)$ as $\alpha \to + \infty$. Let $k \ge 1$ be an integer, let $u_0 \in C^{1,\beta}(M)$ for all $0 < \beta < 1$ (possibly equal to zero) solve \eqref{limiteq}, let $V_1, \dots, V_k $ in $D^{1,2}(\R^n)$ be $k$ solutions of \eqref{yamabe} and let $(\xia)_{\alpha}$ and $(\mia)_\alpha$, $1 \le i \le k$ be $k$ sequences of points in $M$ and positive real numbers satisfying \eqref{structure}. For $i \in \{1, \dots, k \}$ and $x \in M$ we define
\ben \label{defti}
\tia(x) = \mia + d_g(\xia, x),
\een
\ben \label{defTi}
\Theta_{i, \alpha}(x) = \left \{ \begin{aligned} & \tia(x) & \textrm{ if } n =3 \\ & \tia^2(x) \big| \ln \tia(x) \big| & \textrm{ if } n = 4 \\ & \tia^2(x) & \textrm{ if } n \ge 5\end{aligned} \right. 
\een
and
\ben \label{defBia}
\begin{aligned}
B_{0,\alpha}(x) & = \left \{ \bal & 0 & \textrm{ if } u_0 \equiv 0 \textrm{ and } \ker(\triangle_g +h) = \{0\} \\ & 1 & \textrm{ otherwise} \eal \right.  \\
\Bia(x) & = B_{\mia, \xia}(x) \textrm{ for } 1 \le i \le n,
\end{aligned} 
\een
where $B_{\mia, \xia}$ is as in \eqref{Ba}. By definition $B_{0,\alpha} \equiv 0 \iff u_0 \equiv 0 $ and $K_0 = \{0\}$, where $K_0$ is as in \eqref{defK0}. We  will let, throughout this section and the next one, $(\mWa)_\alpha$ be a sequence of continuous functions in $M$ satisfying 
\ben \label{defWa}
\mathcal{W}_\alpha = u_{0,\alpha} + \sum_{i=1}^k W_{i,\alpha},
\een
where $u_{0,\alpha}  \equiv 0 $ if  $u_0 \equiv 0$ and  $\ker(\triangle_g +h) = \{0\}$ and $ \Vert u_{0,\alpha} - u_0 \Vert_{L^\infty(M)} \le 1$ and $ \Vert \uoa - u_0 \Vert_{L^\infty(M)} \to 0 $ otherwise, and where, for $1 \le i \le k$,
 \[ \left \{ \begin{aligned}
& |W_{i,\alpha}(x)|  \le C \Bia(x) \quad \textrm{ for all } x \in M \\
& |\nabla W_{i,\alpha}(x)|  \le C \tia(x)^{-1}\Bia(x) \quad \textrm{ for all } x \in M \\ 
&  \mia^{\pui} W_{i,\alpha} \big( \exp_{\xia}(\mia y) \big)  \to V_i(y) \quad \textrm{ in } \quad C^1_{loc}(\R^n) 
 \end{aligned} \right. \]
 as $\alpha \to + \infty$, for some $C >0$ that only depends on $n,(M,g), V_1, \dots V_k, u_0$ and $h$.

\medskip

We will repeatedly make use of the following notations:
\begin{itemize}
\item If $(a_\alpha)_\alpha$ and $(b_\alpha)_\alpha$ are sequence of real numbers (or functions) with $b_\alpha > 0$ we will write 
\ben \label{ineqq}
 |a_\alpha| \lesssim b_{\alpha} \textrm{ or, equivalently, } a_\alpha = O(b_\alpha)
 \een
if there exists a positive constant $C$ that only depends on $n,(M,g),k$, $V_1, \dots, V_k, u_0$ and $h$ such that $|a_\alpha| \le C b_\alpha$. In particular such a constant $C$ is independent of $\alpha$ and of the choice of $(\mia)_\alpha$, $(\xia)_\alpha$, $1 \le i \le k$. For sequences of real numbers we will denote  $a_\alpha = o(b_\alpha)$ if $\frac{a_\alpha}{b_\alpha} \to 0$ as $\alpha \to + \infty$. 
\item We will denote by $\sa$ any sequence of positive numbers converging to $0$ that only depends on the sequences $(\Vert h_\alpha -h \Vert_{L^\infty(M)})_\alpha$, $(\xia)_\alpha$ and $(\mia)_\alpha$, $1 \le i \le k$. Schematically:
\ben \label{defsigmaa}
\sa = \sa \Big(\Vert h_\alpha -h \Vert_{L^\infty(M)}, (\xia)_{1 \le i \le k }, (\mia)_{1 \le i \le k} \Big), \quad \lim_{\alpha \to + \infty} \sa = 0. 
\een
The notation $\sa$ might denote, from one line to the other, different sequences still satisfying \eqref{defsigmaa}. In some cases, as for instance in the proof of Proposition \ref{recurrence} below, several sequences of this type will have to be distinguished, and they will be numbered as $\sa^1, \sa^2, \dots$. 
\end{itemize}

\medskip

 If $i,j,p,q$ are integers satisfying  $0 \le i,j \le k$, $1 \le p \le n_i$ and $1 \le q \le n_j$ we have for instance
 \ben \label{orthoZij}
\langle Z_{i,p,\alpha}, Z_{j,q,\alpha} \rangle_{H^1(M)} = \delta_{ij} \delta_{pq} + O(\sa),
\een
and this follows from \eqref{defZij}. For simplicity in \eqref{orthoZij} we have let, when $i=0$, $Z_{0,j,\alpha} = Z_{0,j}$, $1 \le j \le n_0$, where $Z_{0,j}$ spans $K_0$. Similarly, by \eqref{limiteq} we have
\[ \Vert  \triangle_g u_0 +  h_\alpha u_0 - |u_0|^{2^*-2}u_0 \Vert_{L^\infty(M)} \le \sa. \]
Let now, for all $s \in \R$,
\ben \label{deff}
f(s) = |s|^{2^*-2}s 
\een
and let $(\mWa)_\alpha$ be a sequence of continuous functions satisfying \eqref{defWa}. Our goal is to obtain pointwise estimates on solutions of the linearized equation of  \eqref{eqha} at $\mWa$. We first recall the standard linear theory in $H^1(M)$. For $\vp \in K_\alpha^{\perp}$, where $K_\alpha$ is as in \eqref{defKa}, let $L_\alpha: K_\alpha^{\perp} \to K_\alpha^{\perp}$ be given by
\[ L_\alpha \vp = \Pi_{K_\alpha^{\perp}}\Big[ \vp - (\triangle_g + 1)^{-1} \Big (f'( \mWa ) \vp + (1-h_\alpha) \vp \Big) \Big] \]
where $\Pi_{K_\alpha^{\perp}}$ denotes the orthogonal projection on $K_{\alpha}^{\perp}$ for the $H^1(M)$ scalar product. The following is a well-known result:

\begin{prop} \label{propH1}
There exists a constant $C= C(n,g,V_1, \dots, V_k,u_0,h)$ such that for $\alpha$ large enough
\[ \frac{1}{C} \Vert \vp \Vert_{H^1(M)} \le \Vert L_\alpha \vp \Vert_{H^1(M)} \le C \Vert \vp \Vert_{H^1(M)} \quad \textrm{ for all } \vp \in K_\alpha^{\perp}. \]
In particular $ L_\alpha: K_\alpha^{\perp} \to K_\alpha^{\perp}$ is a bicontinuous isomorphism.  
\end{prop}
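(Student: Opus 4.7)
The plan is to treat $L_\alpha$ as a compact perturbation of the identity on $K_\alpha^\perp$ and to establish the lower bound via a blow-up contradiction argument. First I would write
\[ L_\alpha \vp = \vp - \Pi_{K_\alpha^\perp} \circ T_\alpha \vp, \qquad T_\alpha \vp = (\triangle_g + 1)^{-1}\bigl(f'(\mWa) \vp + (1-h_\alpha)\vp\bigr). \]
Since $|f'(\mWa)| \lesssim |\mWa|^{2^*-2}$ with $\mWa$ uniformly bounded in $L^{2^*}(M)$, H\"older's inequality shows that $T_\alpha$ is uniformly bounded from $H^1(M)$ into itself; combined with the compact embedding $H^1(M) \hookrightarrow L^q(M)$ for $q < 2^*$ this also gives that $T_\alpha$ is compact with operator norm bounded uniformly in $\alpha$. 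In particular $L_\alpha$ is Fredholm of index zero on $K_\alpha^\perp$, and the upper estimate $\Vert L_\alpha \vp \Vert_{H^1(M)} \le C \Vert \vp \Vert_{H^1(M)}$ is immediate.

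For the lower bound I argue by contradiction: assume $\vpa \in K_\alpha^\perp$ satisfies $\Vert \vpa \Vert_{H^1(M)} = 1$ and $\Vert L_\alpha \vpa \Vert_{H^1(M)} \to 0$. By definition of $L_\alpha$ this amounts to the existence of real numbers $\lija$, with $0 \le i \le k$ and $1 \le j \le n_i$, such that
\[ \triangle_g \vpa + h_\alpha \vpa - f'(\mWa) \vpa = \Ra + \sum_{i,j} \lija\,(\triangle_g + 1) \Zija \]
with $\Vert \Ra \Vert_{H^{-1}(M)} \to 0$. Testing this identity against each $\Zija$ and exploiting the near-orthonormality \eqref{orthoZij} together with the uniform decay \eqref{estZ} yields $\lija = o(1)$ uniformly in $i,j$.

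Next I would extract limiting profiles. Up to a subsequence $\vpa \rightharpoonup \vp_0$ weakly in $H^1(M)$, and passing to the limit in the linearized equation gives $\triangle_g \vp_0 + h \vp_0 = f'(u_0) \vp_0$, i.e.\ $\vp_0 \in K_0$. The orthogonality $\langle \vpa, Z_{0,j} \rangle_{H^1(M)} = 0$ survives the limit and forces $\vp_0 = 0$. For each $1 \le i \le k$, the rescaling
\[ \tilde\vp_{i,\alpha}(y) = \mia^{\pui} \vpa\bigl(\exp_{\xia}(\mia y)\bigr) \]
is bounded in $D^{1,2}(\R^n)$ and, up to a further subsequence, converges weakly to some $\tilde\vp_i$ satisfying $\triangle_\xi \tilde\vp_i = (2^*-1)|V_i|^{2^*-2}\tilde\vp_i$, so $\tilde\vp_i \in K_{V_i}$. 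The orthogonality $\langle \vpa, \Zija \rangle_{H^1(M)} = 0$, combined with the change of variables above and the separation condition \eqref{structure} to decouple distinct bubbles, forces $\tilde\vp_i \in K_{V_i}^\perp$. Hence $\tilde\vp_i = 0$ for every $i$.

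To close the argument, I would test the linearized equation against $\vpa$ itself to get
\[ \int_M |\nabla \vpa|^2 \, dv_g = -\int_M h_\alpha \vpa^2 \, dv_g + \int_M f'(\mWa) \vpa^2 \, dv_g + o(1). \]
The first integral on the right tends to zero because $\vpa \to 0$ in $L^2(M)$ by Rellich compactness and $\vp_0 = 0$. For the second, a concentration-type splitting of $M$ into fixed-radius balls $B_g(\xia, R\mia)$ around each bubble centre (where the above change of variables reduces the integral to $\int f'(V_i) \tilde\vp_{i,\alpha}^2 \, dy$, which tends to $0$ by Rellich and $\tilde\vp_i = 0$) and its complement (where the $L^{2^*}$-mass of the bubbles vanishes as $R \to \infty$, uniformly in $\alpha$) shows that this integral also vanishes in the limit. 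This forces $\Vert \vpa \Vert_{H^1(M)} \to 0$, contradicting the normalization. The main obstacle is to carry out the profile extraction cleanly in the presence of several possibly interacting bubbles, but this is a by-now standard concentration-compactness manipulation once the separation condition \eqref{structure} is available; the sign-changing nature of the $V_i$ plays no role at this purely $H^1$ level.
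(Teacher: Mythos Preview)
Your proposal is correct and follows precisely the standard contradiction/blow-up argument that the paper has in mind: the paper does not give its own proof but refers to Robert--V\'etois, Esposito--Pistoia--V\'etois and Musso--Pistoia, and your outline is exactly the argument found in those references. The only place where a reader might want slightly more detail is the compactness of $T_\alpha$ (the critical part $f'(\mWa)\vp$ needs the usual truncation trick to be written as a norm-limit of compact operators) and the concentration splitting in the final step, but you have correctly identified both points and they are indeed routine once \eqref{structure} is available.
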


The proof of Proposition \ref{propH1} follows from standard arguments that can be found e.g. in Robert-V\'etois \cite{RobertVetois}, Esposito-Pistoia-V\'etois \cite{EspositoPistoiaVetois} or Musso-Pistoia \cite{MussoPistoia2}. It shows in particular that for any $\alpha \ge 1$, if $R_\alpha$ is a continuous function in $M$, there exists a unique $\vpa \in K_\alpha^{\perp}$ and unique real numbers $\lija$, $0 \le i \le k$, $1 \le j \le n_i$, such that 
\[ \triangle_g \vpa + h_\alpha \vpa - f'(\mWa) \vpa = R_\alpha +  \sum \limits_{\underset{j =1 .. n_i }{i=0..k}} \lija ( \triangle_g   + 1)Z_{i,j, \alpha}.
\]
By standard elliptic theory $\vpa \in C^1(M)$. The main result of this section describes how $\vpa$ inherits pointwise estimates from $R_\alpha$:

 \begin{theo} \label{proplin}
Let $(M,g)$ be a closed Riemannian manifold of dimension $n \ge 3$, let $(h_\alpha)_\alpha$ be a sequence of functions converging towards $h$ in $L^\infty(M)$, $k \ge 1$ be an integer, $u_0 \in C^{1,\beta}(M)$ for $0 < \beta < 1$ be a solution of \eqref{limiteq}, $V_1, \dots, V_k $ in $D^{1,2}(\R^n)$ be solutions of \eqref{yamabe} and let $(\xia)_{\alpha}$ and $(\mia)_\alpha$, $1 \le i \le k$ be $k$ sequences satisfying \eqref{structure}. Let $(\mWa)_\alpha$ be a sequence of continuous functions in $M$ satisfying \eqref{defWa}. There exist $C_0 = C_0(n,g,V_1, \dots, V_k, u_0,h) >0$ and there exists a sequence $(\sa)_\alpha$ as in \eqref{defsigmaa} such that, up to passing to a subsequence for $(\sa)_\alpha$, the following holds: 

\medskip

Let $(\ea)_\alpha, (\tau_\alpha)_\alpha$ be sequences in $\mathcal{E}$, $(\gamma_\alpha)_\alpha$ be a bounded sequence of real numbers and $(R_\alpha)_\alpha$ be a sequence of continuous functions in $M$ satisfying 
\ben \label{propRa}
|R_\alpha| \le  \tau_\alpha B_{0,\alpha} +  \ea \sum_{i=1}^k \Bia^{2^*-1} + \gamma_\alpha \Big( \sum_{i=1}^k \Bia +   \sum \limits_{\underset{i \neq j}{i,j=0..k}} \Bia^{2^*-2} \Bja\Big)
\een
 in $M$. Let $\vpa \in K_\alpha^{\perp}$ be the unique solution of 
\ben \label{eqvpa} \triangle_g \vpa + h_\alpha \vpa - f'(\mWa) \vpa = R_\alpha +  \sum \limits_{\underset{j =1 .. n_i }{i=0..k}} \lija ( \triangle_g   + 1)Z_{i,j, \alpha} 
 \een
given by Proposition \ref{propH1}. Then $\vpa$ satisfies:
\ben \label{estopt}
|\vpa(x)| \le C_0 \big( \ta + \ea + \sa \gamma_\alpha   \big) \sum_{i=0}^k \Bia(x)  + C_0\gamma_\alpha \sum_{i=1}^k \Tia(x) B_i(x) 
\een
for all $\alpha$ and for all $x \in M$. Also, for all $\alpha$,
\ben \label{vp}
 \sum \limits_{\underset{j =1 .. n_i }{i=0..k}} |\lija|  \le C_0 \big( \ta + \ea +\sa  \gamma_\alpha   \big).
\een
\end{theo}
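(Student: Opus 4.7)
The plan is to argue by contradiction, combining a Green's-function representation formula for $\vpa$ with a bubble-tree rescaling argument. Introduce the weight
\[\tilde W_\alpha(x) := (\ta + \ea + \sa\gamma_\alpha)\sum_{i=0}^k \Bia(x) + \gamma_\alpha \sum_{i=1}^k \Tia(x)\Bia(x)\]
appearing in \eqref{estopt}, and set $\Lambda_\alpha := \sup_M |\vpa|/\tilde W_\alpha$. The negation of \eqref{estopt} reads $\Lambda_\alpha \to +\infty$ along a subsequence; after rescaling $\tilde\vpa := \vpa/\Lambda_\alpha$, one has $|\tilde\vpa| \le \tilde W_\alpha$ with equality at some $x_\alpha \in M$, and the whole argument consists in deducing $|\tilde\vpa|/\tilde W_\alpha \to 0$ uniformly, contradicting this equality. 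As a preliminary I would bound the Lagrange multipliers: testing \eqref{eqvpa} against $Z_{i,j,\alpha}$, integrating by parts, and using that $Z_{i,j,\alpha}$ is, after rescaling and truncation, a kernel element for $\triangle_\xi - (2^*-1)|V_i|^{2^*-2}$, the leading contributions cancel. The orthogonality $\vpa \in K_\alpha^\perp$, \eqref{orthoZij}, and the pointwise bound \eqref{propRa} on $R_\alpha$ then yield $\sum|\lija| \lesssim \ta + \ea + \sa\gamma_\alpha + o(\Lambda_\alpha)$, which will establish \eqref{vp} once the main pointwise estimate is reached.

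Next I would write the representation formula via the Green's function $G$ of $\triangle_g + 1$:
\[\vpa(x) = \int_M G(x,y)\bigl[(f'(\mWa) + 1 - h_\alpha)\vpa(y) + R_\alpha(y)\bigr]\,dv_g(y) + (\textrm{kernel contribution}),\]
the kernel contribution being controlled by the preliminary step. Standard Green-function-against-bubble convolution estimates applied to \eqref{propRa} show that the $R_\alpha$-part produces exactly the weight $\tilde W_\alpha$ on the right; in particular, $G\ast \Bia$ is of the order $\Tia\Bia$, which accounts for the second group of terms in \eqref{estopt}, and the cross-terms $\Bia^{2^*-2}\Bja$ are absorbed in $\tilde W_\alpha$ after convolution. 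The delicate piece is the integral $\int_M G(x,y) f'(\mWa)(y)\vpa(y)\,dv_g(y)$, which is of critical borderline size since $|f'(\mWa)| \lesssim \sum \Bia^{2^*-2}$; showing that its contribution is $o(\tilde W_\alpha)$ is where the bubble-tree analysis enters.

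To control this critical term I would rescale at each bubble. For each $i \in \{1,\dots,k\}$ set $\tilde v_{i,\alpha}(y) := \mia^{(n-2)/2}\tilde\vpa(\exp_{\xia}(\mia y))$: the bound $|\tilde\vpa| \le \tilde W_\alpha$ together with \eqref{contVa} makes $\tilde v_{i,\alpha}$ locally uniformly bounded in $\R^n$, and the rescaled equation has leading coefficient $(2^*-1)|V_i|^{2^*-2}$. By elliptic regularity and extraction, $\tilde v_{i,\alpha} \to \tilde v_i$ in $C^0_{loc}(\R^n)$ with $\tilde v_i \in D^{1,2}(\R^n)$ solving $\triangle_\xi \tilde v_i = (2^*-1)|V_i|^{2^*-2}\tilde v_i$, so $\tilde v_i \in K_{V_i}$. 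The orthogonality $\vpa \in K_\alpha^\perp$ together with \eqref{orthoZij} forces $\tilde v_i \perp K_{V_i}$, whence $\tilde v_i \equiv 0$. A parallel argument at the macroscopic scale shows the $H^1(M)$-weak limit $\tilde v_0$ of $\tilde\vpa$ solves the linearised equation at $u_0$ and is orthogonal to $K_0$, so $\tilde v_0 \equiv 0$. Reinjecting these vanishings into the critical integral via a covering of $M$ by concentration neighborhoods of each $\xia$ and a macroscopic complement yields $|\tilde\vpa| = o(\tilde W_\alpha)$, the desired contradiction.

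The main obstacle will be carrying out this rescaling when bubbles have comparable scales $\mia \simeq \mja$ or lie in each other's necks ($d_g(\xia,\xja) \simeq \max(\mia,\mja)$). In these regimes the cross-terms $\Bia^{2^*-2}\Bja$ in \eqref{propRa} become non-negligible and a naive single-scale rescaling does not close the estimate: the leading contribution to the critical integral at one bubble comes from the interaction region with a neighboring bubble. The remedy is to order all bubbles into a tree governed by the quantities $\mia/\mja + \mja/\mia + d_g(\xia,\xja)^2/(\mia\mja)$ from \eqref{structure}, and to run the representation-rescaling bootstrap inductively through this tree: at each level, the improved pointwise bound on $\tilde\vpa$ just obtained near the inner bubbles is reinjected into the Green's-function convolution to control the critical integral on the next larger scale, until the macroscopic scale is reached. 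This inductive bubble-tree bootstrap, rather than a single application of the representation formula, is the technical core of Theorem \ref{proplin} and is what occupies the bulk of Section \ref{linear}.
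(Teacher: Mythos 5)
Your overall strategy (a Green's function representation formula combined with rescalings at each bubble, the orthogonality $\vpa\in K_\alpha^{\perp}$ killing the limiting kernel elements, and an inductive bootstrap along the bubble tree) is indeed the philosophy of Section \ref{linear}, but two of your structural choices create genuine gaps. First, writing the representation formula for $\triangle_g+1$ leaves the zeroth-order term $(1-h_\alpha)\vpa$ on the right-hand side, and this term is \emph{not} perturbatively small in your scheme: the convolution $\int_M d_g(x,\cdot)^{2-n}\Bia \,dv_g$ is of size $\Tia(x)\Bia(x)$ by \eqref{greenB}, hence comparable to $\Bia(x)$ (with a constant of order one) as soon as $d_g(\xia,x)$ is of order one, and similarly $\int_M d_g(x,\cdot)^{2-n}B_{0,\alpha}\,dv_g$ is comparable to $B_{0,\alpha}(x)$. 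With only the a priori bound $|\vpa|\le\Lambda_\alpha \tilde W_\alpha$, this term therefore reproduces the whole weight $\tilde W_\alpha$ with an $O(1)$ constant, so the inequality at your maximum point does not close; in particular nothing is obtained when the maximum of $|\vpa|/\tilde W_\alpha$ is attained far from all concentration points. The paper avoids exactly this by writing the representation formula for $\triangle_g+h$ in Step 1 (with the kernel part $\Pi_{K_h}\vpa$ controlled as in \eqref{proplin101}, thanks to $K_0\subset K_\alpha$) and for $\triangle_g+h-f'(u_0)$ in the final subsection, so that only the genuinely small terms $(h-h_\alpha)\vpa$ and the bubble-localized $f'(\mWa)-f'(u_0)$ remain. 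Relatedly, your multiplier bound $\sum|\lija|\lesssim\ta+\ea+\sa\ga+o(\Lambda_\alpha)$ is too weak even for your own argument: after dividing by $\Lambda_\alpha$ the kernel contribution is only $o(1)\sum_i\Bia$, which need not be $o(\tilde W_\alpha)$ since $\ta$ and $\ea$ may vanish arbitrarily fast. The paper obtains \eqref{vp} directly from $\Vert R_\alpha\Vert_{L^{2n/(n+2)}}\lesssim\ta+\ea+\sa\ga$ and Proposition \ref{propH1}, independently of any pointwise information, and it is this sharp bound that is fed into the representation formula.

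Second, a contradiction argument built on $\Lambda_\alpha=\sup_M|\vpa|/\tilde W_\alpha$ for a \emph{given} $(R_\alpha)_\alpha$ extracts a subsequence depending on $(\ta)_\alpha,(\ea)_\alpha,(\ga)_\alpha,(R_\alpha)_\alpha$, whereas in Theorem \ref{proplin} the constant $C_0$ and the subsequence for $(\sa)_\alpha$ must be fixed before these data are chosen; this uniformity over all admissible right-hand sides is precisely what allows the theorem to be applied in the fixed-point scheme of Proposition \ref{nonlin2}, where $R_\alpha$ runs over a family depending on the unknown. The paper achieves this by working with the explicit quantity $a_\alpha$ of \eqref{defaa}, propagating it through the bubble-tree induction while keeping separate the factors of type \eqref{defsigmaa} (depending only on $(\xia)_\alpha,(\mia)_\alpha,(h_\alpha)_\alpha$) from those involving $\ta,\ea,\ga$, and closing with the self-improving inequality $a_\alpha\lesssim\ta+\ea+\ga+\sa a_\alpha$; generic unquantified $o(1)$'s are not admissible anywhere in this argument. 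Finally, the hardest configurations — a maximum point sitting in a neck or annulus seen by no rescaled limit, and possibly degenerate $V_i$ — are exactly what your phrase ``inductive bubble-tree bootstrap'' leaves unexplained: one needs the specific radii $\rjia$ of \eqref{defrhoia} (rather than $\sjia$), the gradient estimates of Step 2 (indispensable both for the removability of the singularities of the rescaled limit and for showing this limit does not vanish at the maximum), and the control of $\Vert\vpa\Vert_{L^\infty(\Oia)}$ of Lemma \ref{lemmeestnorme}. Without these ingredients your contradiction cannot be located at any scale.
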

The quantities $\mathcal{E}, B_{0,\alpha}, \Bia$ and $\Tia$ have been defined in  \eqref{defE}, \eqref{defTi} and \eqref{defBia}. As before we have let $Z_{0,j,\alpha} = Z_{0,j}$, $1 \le j \le n_0$, where $Z_{0,j}$ spans $K_0$. Note that by \eqref{defsigmaa}, passing to a subsequence for $\sa$ amounts to passing to a common subsequence for $(\xia)_{\alpha}$, $(\mia)_\alpha$, $1 \le i \le k$, and $(h_\alpha)_\alpha$.

\medskip

Theorem \ref{proplin} is a generalisation of Proposition \ref{propH1} where the $H^1$ control is improved into effective pointwise estimates. When $\mWa$ is given by $\mWa = u_0 + \sum_{i=1}^k \Via$, \eqref{propRa} is satisfied by $R_\alpha = \triangle_g \mWa + h_\alpha \mWa  - |\mWa|^{2^*-2} \mWa$, and this follows from \eqref{errVxa}. Theorem \ref{proplin} has thus to be interpreted as a linearised version of Theorem \ref{theorieC0}. Here is an alternative way to see \eqref{estopt}: 
assume, for simplicity, that $\triangle_g +h$ has no kernel and let $G_h$ be its Green's function in $M$. Direct computations using \eqref{propRa} show that there exists a sequence $(\sa)_\alpha$ as in \eqref{defsigmaa} such that for all $x \in M$
\[
\begin{aligned}
\int_M G_h(x, \cdot) R_\alpha dv_g &\lesssim \big( \ta + \ea + \sa \gamma_\alpha   \big) \sum_{i=0}^k \Bia(x) \\
& +\gamma_\alpha \sum_{i=1}^k \Tia(x) B_i(x)
\end{aligned}
\]
holds (see \eqref{proplin1} below for more details). Theorem \ref{proplin} thus essentially says that the solution $\vpa$ of \eqref{eqvpa} behaves as if it formally satisfied the equation
\[ \triangle_g u + h u = R_\alpha .\]
A similar interpretation remains true if $\triangle_g +h$ has non-zero kernel, where the projection of $\vpa$ on the kernel of $\triangle_g +h $ has to be taken into account. The method of proof of Theorem \ref{proplin} is flexible and if a more precise estimate than \eqref{propRa} is available on $R_\alpha$ then one recovers a better estimate on $\vpa$ than \eqref{estopt}: roughly speaking one expects $\vpa$ to satisfy the same pointwise estimates as $(\triangle_g +h)^{-1} R_\alpha$. Sharper estimates than \eqref{estopt} have for instance been obtained in Premoselli \cite{Premoselli12} when $n \ge 7$ and  $\mWa$ looks like a tower of positive bubbles (possibly with alternating signs). We also mention Deng-Sun-Wei \cite{DengSunWei} who obtained by different arguments pointwise estimates on solutions of the linearised Yamabe equation in $\R^n$ at specific configurations of sums of finitely many positive bubbles. An important remark here is that our proof of Theorem \ref{proplin} does not require any non-degeneracy assumption in the sense of Duyckaerts-Kenig-Merle \cite{DuyckaertsKenigMerle} on the possibly sign-changing bubbles $V_1, \dots, V_k$: any finite-energy solution of \eqref{yamabe} is allowed in \eqref{defWa}. This is the main reason why we cannot expect, in this paper, to get a better control than \eqref{estopt} on $\vpa$. We refer to Remark \ref{remprecision} below for a more detailed discussion on this point. 

\medskip

  The following four subsections are devoted to the proof of Theorem \ref{proplin}. Subsection \ref{radius} introduces the notion of radius of influence of a nodal bubble and proves pointwise estimates at different scales of the bubble-tree. Subsection \ref{proof1} and \ref{proof2} are devoted to the core of the proof, where improved estimates in the spirit of \eqref{estopt} are proven on the area of influence of each bubble by an inductive argument. Finally subsection \ref{proof3} concludes the proof of Theorem \ref{proplin}.

\subsection{Radius of influence and bubble-tree computations} \label{radius}

Let $k \ge 1$, $V_1, \dots, V_k $ in $D^{1,2}(\R^n)$ be $k$ solutions of \eqref{yamabe}, $u_0$ be a solution of \eqref{limiteq} and $(\xia)_{\alpha}$ and $(\mia)_\alpha$, $1 \le i \le k$ be $k$ sequences of points in $M$ and of positive real numbers satisfying \eqref{structure}. Let $(\mWa)_\alpha$ be as in \eqref{defWa}. Recall that $u_0, V_1, \dots, V_k$ can change sign. In this subsection we introduce the definition of the radius of influence of a nodal bubble and prove a few related results. The exposition is strongly inspired from Druet \cite{DruetJDG} and Druet-Hebey \cite{DruetHebey2} (see also Hebey \cite{HebeyZLAM}, Chapter $8$) where this notion was developed for positive solutions of \eqref{eqha}. Remarkably, it remains extremely useful even to describe the pointwise interactions of nodal bubbles. In the following we will sometimes refer to the sum
\[ \mWa = u_{0,\alpha}  + \sum_{i=1}^k W_{i,\alpha} \]
as a \emph{bubble-tree}. We follow notations \eqref{ineqq} and \eqref{defsigmaa} in this section. 

\medskip

For $ i \in \{1, \dots, k\}$ we define 
\ben \label{defAi}
\mathcal{A}_i = \Big \{ j \in \{1, \dots, k\} \backslash \{i\} \textrm{ such that } \mia  = O(\mu_{j, \alpha}) \Big \} .
\een
These are the bubbles in the family $V_{1,\alpha}, \dots, V_{k,\alpha}$ that concentrate at a slower or comparable speed than the $i$-th bubble -- the so-called \emph{lower} bubbles. If $j \in \mathcal{A}_i$,  we define the radius of interaction of the $i$-th bubble with the lower $j$-th bubble as 
\ben \label{defsij}
s_{i,j,\alpha} = \Big( \frac{\mia}{\mja} \frac{d_g(\xia, \xja)^2}{n(n-2)} + \mia \mja \Big)^{\frac12}. 
\een
The radius of influence of the $i$-th bubble is then defined as follows
\ben \label{defri}
r_{i,\alpha} = \left \{ 
\bal 
& \min \Big( \min_{j \in \mathcal{A}_i} s_{i,j, \alpha}, \frac{i_g}{2} \Big) & \textrm{ if } u_0 \equiv 0 \textrm{ and } \ker(\triangle_g + h) = \{0\} \\
& \min \Big( \min_{j \in \mathcal{A}_i} s_{i,j, \alpha}, \sqrt{\mia}\Big) & \textrm{ otherwise }  \\
\eal \right. .
\een
If $\mathcal{A}_i = \emptyset$ we have in particular 
\[ r_{i,\alpha} = \left \{ 
\bal 
&\frac{i_g}{2} & \textrm{ if } u_0 \equiv 0 \textrm{ and } \ker(\triangle_g + h) = \{0\} \\
& \sqrt{\mia} & \textrm{ otherwise } \\
\eal \right. .  \]
The radius $\ria$ is defined so that $\ria \le \sqrt{\mia}$ whenever $B_{0,\alpha} \equiv 1$ (see \eqref{defBia}), so that $\Bia \gtrsim B_{0,\alpha}$ on $B_g(\xia, \ria)$. By \eqref{structure} we have that $\frac{s_{i,j,\alpha}}{\mia} \to + \infty$ for all $j \in \mathcal{A}_i$ and hence $\frac{r_{i,\alpha}}{\mia} \to + \infty$ as $\alpha \to + \infty$. Define now
\[  \Lambda = \max_{i\neq j, \mia \asymp \mja} \limsup_{\alpha \to + \infty} \frac{\mia}{\mja} \]
where, for $i \neq j$, $ \mia \asymp \mja$ means that there exists $C >0$ independent of $\alpha$ such that $\frac{1}{C} \mia \le \mja \le C \mia$ up to a subsequence. Assume that there exist $i \neq j$ such that $ \mia \asymp \mja$. The structure relation \eqref{structure} shows that $\frac{d_g(\xia, \xja)}{\mia} \to + \infty$ and we thus have by \eqref{defsij} and \eqref{defri} that for $\alpha$ large enough
\[ r_{i, \alpha} + r_{j,\alpha} \le s_{i,j,\alpha} + s_{j,i,\alpha} \le 2 \sqrt{\frac{ \Lambda}{n(n-2)}} d_g(\xia, \xja). \] 
Let $R_0 >2$ be large enough so that, up to a subsequence, 
\ben \label{defR0}
\begin{aligned}
& \textrm{ for all } i \neq j \textrm{ such that }  \mia \asymp \mja, \quad \frac{2 \ria}{R_0} \le \frac14 d_g(\xia, \xja). 
\end{aligned}
 \een
In particular
\[ B_{g}\Big(\xia, \frac{2 r_{i,\alpha}}{R_0} \Big) \cap B_{g}\Big(\xja, \frac{2 r_{j,\alpha}}{R_0} \Big) = \emptyset \]
for all $i \neq j$ with $ \mia \asymp \mja$. 
The following lemma follows from the definitions of $\ria$ and $R_0$ and its proof is omitted:
 
 \begin{lemme} \label{compbulles}
 Let $i \in \{1, \dots, k\}$ and $j \in \mathcal{A}_i$. Then 
 \[ B_{j,\alpha}(x) \lesssim \frac{\mia^{\pui}}{s_{i,j,\alpha}^{n-2}} \quad \textrm{ and } \quad  |\nabla B_{j,\alpha}(x)| \lesssim \frac{\mia^{\pui}}{s_{i,j,\alpha}^{n-1}} \quad \textrm{ for all } x \in  B_{g}\Big(\xia, \frac{2 r_{i,\alpha}}{R_0} \Big),\]
 where $B_{j,\alpha}$ is as in \eqref{defBia}. In particular, 
 \[ |V_{j,\alpha}| \lesssim \Bia \quad \textrm{ and } \quad |\nabla V_{j,\alpha}|   \lesssim \tia^{-1} \Bia\]
  in $B_{g}\Big(\xia, \frac{2 r_{i,\alpha}}{R_0} \Big)$, where $V_{j,\alpha}$ is as in \eqref{Via} and $\tia$ as in \eqref{defti}.
 \end{lemme}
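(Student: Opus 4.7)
The strategy is to reduce all four estimates to a single geometric claim: for every $x \in B_g(\xia, 2\ria/R_0)$ one has
\ben \label{keygeomclaim}
d_g(\xia,\xja)^2 \lesssim \mja^2 + d_g(\xja, x)^2.
\een
Once \eqref{keygeomclaim} is granted, the first inequality of the lemma is equivalent, after cross-multiplication and use of the algebraic identity
\[ \sija^2\, \mja = \mia \Big( \tfrac{d_g(\xia,\xja)^2}{n(n-2)} + \mja^2 \Big), \]
to exactly \eqref{keygeomclaim}; this is immediate from the definitions \eqref{Ba} of $\Bja$ and \eqref{defsij} of $\sija$. The gradient bound then follows from the pointwise estimate $|\nabla \Bja|(x) \lesssim \Bja(x)/(\mja + d_g(\xja,x))$ together with the auxiliary inequality $\sija \lesssim \mja + d_g(\xja,x)$, which itself is a direct consequence of \eqref{keygeomclaim} and of $\mia \lesssim \mja$ (since $j \in \mathcal{A}_i$).

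To establish \eqref{keygeomclaim} I would split into two cases according to the ratio $\mia/\mja$. If $\mia \asymp \mja$, then the choice of $R_0$ in \eqref{defR0} yields $2\ria/R_0 \le d_g(\xia,\xja)/4$, and the triangle inequality produces $d_g(\xja,x) \ge (3/4)d_g(\xia,\xja)$, which is far more than enough. If instead $\mia = o(\mja)$, the constraint \eqref{defR0} is not available and one must exploit the bound $\ria \le \sija$ provided by \eqref{defri}. Write $d = d_g(\xia,\xja)$. If $d \le 2\mja$ the claim is trivial because the $\mja^2$ term already dominates; if $d \ge 2\mja$ then
\[ \sija^2 \le \frac{\mia}{\mja}\Big( \frac{d^2}{n(n-2)} + \mja^2 \Big) \le \frac{\mia}{\mja}\, d^2 \Big( \frac{1}{n(n-2)} + \frac{1}{4}\Big) = o(1)\,d^2, \]
so $\ria \le \sija = o(d)$, and for $\alpha$ large the triangle inequality gives $d_g(\xja,x) \ge d - 2\ria/R_0 \ge d/2$. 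In both sub-cases \eqref{keygeomclaim} holds.

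For the last two estimates of the lemma I would invoke the bubble-control \eqref{contVa} to replace $V_{j,\alpha}$ and $\nabla V_{j,\alpha}$ by $\Bja$ and $\Bja/(\mja + d_g(\xja,\cdot))$ respectively; it then suffices to compare these with $\Bia$ and $\tia^{-1}\Bia$. On $B_g(\xia, 2\ria/R_0)$ one has $d_g(\xia,x) \le 2\sija/R_0$ and $\mia^2 \le \mia\mja \le \sija^2$ (using once more $j \in \mathcal{A}_i$), so $\mia^2 + d_g(\xia,x)^2 \lesssim \sija^2$. This gives simultaneously the lower bound $\Bia(x) \gtrsim \mia^{(n-2)/2}/\sija^{n-2}$ and the upper bound $\tia(x) \lesssim \sija$, and combining these with the first two estimates of the lemma yields the desired $|V_{j,\alpha}| \lesssim \Bia$ and $|\nabla V_{j,\alpha}| \lesssim \tia^{-1}\Bia$. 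The main technical obstacle is the verification of \eqref{keygeomclaim} in the case $\mia = o(\mja)$, where the separation condition \eqref{defR0} no longer applies and one must instead quantitatively exploit the definition of the radius of influence.
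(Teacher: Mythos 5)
Your argument is correct: it is precisely the elementary verification from the definitions of $\sija$, $\ria$ and $R_0$ that the paper has in mind when it states that Lemma \ref{compbulles} "follows from the definitions of $\ria$ and $R_0$" and omits the proof. The reduction to the single inequality $d_g(\xia,\xja)^2 \lesssim \mja^2 + d_g(\xja,x)^2$, its verification in the two regimes $\mia \asymp \mja$ (via \eqref{defR0}) and $\mia = o(\mja)$ (via $\ria \le \sija$), and the passage to $V_{j,\alpha}$ through \eqref{contVa} together with $\mia^2 + d_g(\xia,x)^2 \lesssim \sija^2$ are all sound and consistent with the paper's conventions on subsequences and uniform constants.
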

  
We still define here the radii of influence $\ria$ by comparing the heights of the positive bubbles $\Bia$, just as in the positive case of Druet \cite{DruetJDG} (see also Druet-Hebey \cite{DruetHebey2}). This is not surprising since, although $V_{i,\alpha}$ might vanish, its value at a generic point is of the order of $\Bia$ by  \eqref{contVa} and  \eqref{Via}. We will refer to the ball $B_g(\xia, \frac{2\ria}{R_0})$ as \emph{the area of influence of $V_{i,\alpha}$}. 

\medskip

Any bubble in the bubble-tree also interacts pointwise with bubbles concentrating strictly faster -- the so-called \emph{higher} bubbles. In our proof of Theorem \ref{proplin} only the higher bubbles interacting with $\Bia$ in its area of influence will have to be taken into account. Let $i \in \{1, \dots, k \}$. We define:
\ben \label{defBi}
\mathcal{B}_i = \Big \{ j \in \{1, \dots, k \} \backslash \{i\} \textrm{ s. t. } \mja = o(\mia) \textrm{ and } d_g(\xia, \xja) \le \frac{2 \ria}{R_0} \Big \}.
\een
We then have the following lemma: 
\begin{lemme}
For any $x \in  B_g(\xia, \frac{3 \ria}{2R_0})$ we have 
\ben \label{calcul3}
\begin{aligned}
|\mWa(x)| & \lesssim \sum_{j=0}^k \Bja(x) \lesssim \Bia(x) + \sum_{j \in \mathcal{B}_i} \Bja(x) \quad \textrm{ and } \\
|\nabla \mWa(x)| & \lesssim B_{0,\alpha}(x) +  \sum_{j=1}^k \tja(x)^{-1}\Bja(x)  \\
&\lesssim \tia(x)^{-1} \Bia(x) + \sum_{j \in \mathcal{B}_i} \tja(x)^{-1}\Bja(x) .
 \end{aligned}
 \een
 \end{lemme}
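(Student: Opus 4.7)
The plan is to first reduce the estimate to the sum $\sum_{j=0}^k \Bja$ using the assumptions that define $\mWa$ in \eqref{defWa}, and then to absorb every $j \notin \{i\} \cup \mathcal B_i$ (together with $j=0$) into $\Bia$ on the area of influence of the $i$-th bubble.

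\textbf{Step 1 (the first $\lesssim$).} By \eqref{defWa}, one has $|W_{j,\alpha}| \le C\Bja$ and $|\nabla W_{j,\alpha}| \le C \tja^{-1}\Bja$ for $1 \le j \le k$. As for $u_{0,\alpha}$, when $B_{0,\alpha}\equiv 0$ the function $u_{0,\alpha}$ vanishes, and otherwise $\Vert u_{0,\alpha}-u_0 \Vert_{L^\infty(M)}\to 0$ together with elliptic regularity for \eqref{limiteq} ($u_0\in C^{1,\beta}(M)$) give $|u_{0,\alpha}| + |\nabla u_{0,\alpha}| \lesssim 1 \asymp B_{0,\alpha}$. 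Summing these bounds provides the first inequalities in both lines.

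\textbf{Step 2 (absorbing the other bubbles into $\Bia$).} Fix $x \in B_g(\xia, \frac{3\ria}{2R_0})$. The indices $j\in\{1,\dots,k\}\setminus\{i\}$ split into $\mathcal A_i$ (lower bubbles, $\mia=O(\mja)$) and the complement inside $\{\mja=o(\mia)\}$, itself split into $\mathcal B_i$ and those with $d_g(\xia,\xja)>\frac{2\ria}{R_0}$. For $j\in\mathcal A_i$, Lemma \ref{compbulles} yields $\Bja(x)\lesssim \mia^{(n-2)/2}s_{i,j,\alpha}^{2-n}$ and $|\nabla \Bja(x)|\lesssim \mia^{(n-2)/2}s_{i,j,\alpha}^{1-n}$; since on this ball $\tia(x)\le \mia+\frac{2\ria}{R_0}\lesssim \ria\le s_{i,j,\alpha}$ and $\Bia(x)\asymp \mia^{(n-2)/2}\tia(x)^{2-n}\gtrsim \mia^{(n-2)/2}\ria^{2-n}$, one readily gets
\[
\Bja(x)\lesssim \Bia(x),\qquad \tja(x)^{-1}\Bja(x)\lesssim \tia(x)^{-1}\Bia(x),
\]
where the gradient bound uses $\tja^{-1}\Bja\lesssim \mja^{(n-2)/2}\tja^{1-n}$ together with $\tja(x)\gtrsim \max(\mja,d_g(\xia,\xja))\gtrsim s_{i,j,\alpha}\sqrt{\mia/\mja}$ (which follows from the definition \eqref{defsij}) and the lower bound $\tia(x)^{-1}\Bia(x)\gtrsim \mia^{(n-2)/2}\ria^{1-n}$. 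For $j$ with $\mja=o(\mia)$ and $d_g(\xia,\xja)>\frac{2\ria}{R_0}$, the triangle inequality gives $d_g(\xja,x)\ge \frac{\ria}{2R_0}$, hence $\Bja(x)\lesssim \mja^{(n-2)/2}\ria^{2-n}=o(\mia^{(n-2)/2}\ria^{2-n})=o(\Bia(x))$, and similarly $\tja^{-1}\Bja(x)=o(\tia^{-1}\Bia(x))$. Finally if $B_{0,\alpha}\equiv 1$ then by \eqref{defri} one has $\ria\le \sqrt{\mia}$, so $\tia(x)\lesssim \sqrt{\mia}$ and $\Bia(x)\gtrsim 1\gtrsim B_{0,\alpha}$; hence the $B_{0,\alpha}$ term on both lines is dominated by $\Bia$ (respectively $\tia^{-1}\Bia$, as $\tia(x)^{-1}\gtrsim\mia^{-1/2}\ge 1$ for $\alpha$ large). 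Bubbles $j\in\mathcal B_i$ are kept in the sum.

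\textbf{Main obstacle.} The only delicate point is the gradient estimate for $j\in\mathcal A_i$: one must control $\tja(x)^{-1}$ from above on $B_g(\xia,\frac{3\ria}{2R_0})$, which requires splitting $\mathcal A_i$ according to whether $\mja\asymp\mia$ (then $d_g(\xja,x)\asymp d_g(\xia,\xja)$ by \eqref{defR0}) or $\mja\gg\mia$ (then $\tja\gtrsim\mja$ suffices), and combining both cases with the formula \eqref{defsij} for $s_{i,j,\alpha}$ to recover the announced scale $\ria^{n-1}$. All the constants depend only on $n,(M,g),k,V_1,\dots,V_k,u_0,h$ and $R_0$, which is consistent with the notation \eqref{ineqq}.
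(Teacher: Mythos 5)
Your overall route is the same as the paper's: the first inequalities in both lines come straight from \eqref{defWa} (and \eqref{contVa}); the term $B_{0,\alpha}$ is absorbed into $\Bia$ because $\ria\le\sqrt{\mia}$ whenever $B_{0,\alpha}\equiv 1$; bubbles with $\mja=o(\mia)$ and $d_g(\xia,\xja)>\frac{2\ria}{R_0}$ are absorbed through the crude bounds $\mja^{\pui}\ria^{2-n}\lesssim\mia^{\pui}\ria^{2-n}\lesssim\Bia$ and $\mja^{\pui}\ria^{1-n}\lesssim\tia^{-1}\Bia$; and the lower bubbles $j\in\mathcal{A}_i$ are handled by the comparison estimates of Lemma \ref{compbulles}, which the paper simply quotes at this point. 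Your Step 1 and the sup-norm part of Step 2 are fine (the only shared imprecision with the paper is the gradient bound on $u_{0,\alpha}$, which \eqref{defWa} does not literally provide).

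The step you yourself single out as the delicate one — $\tja(x)^{-1}\Bja(x)\lesssim\tia(x)^{-1}\Bia(x)$ for $j\in\mathcal{A}_i$ — is where your written argument does not close. The bound you invoke, $\tja(x)\gtrsim\max\big(\mja,d_g(\xia,\xja)\big)\gtrsim \sija\sqrt{\mia/\mja}$, has the ratio inverted: from \eqref{defsij} one computes $\sija\sqrt{\mia/\mja}\asymp\frac{\mia}{\mja}\max\big(\mja,d_g(\xia,\xja)\big)$, so the correct (and needed) comparison is $\max\big(\mja,d_g(\xia,\xja)\big)\asymp\sija\sqrt{\mja/\mia}$. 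With only your weaker bound one gets $\mja^{\pui}\tja^{1-n}\lesssim\mia^{\pui}\sija^{1-n}\big(\mja/\mia\big)^{\frac{2n-3}{2}}$, and the unbounded factor $\big(\mja/\mia\big)^{\frac{2n-3}{2}}$ destroys the conclusion as soon as $\mja\gg\mia$. Your fallback for that case, namely that $\tja\gtrsim\mja$ suffices, also fails when $d_g(\xia,\xja)\gg\mja$: take two bubbles with $u_0\equiv 0$, $\ker(\triangle_g+h)=\{0\}$, $\mja=\mia^{1/2}$ and $d_g(\xia,\xja)\asymp 1$, so that $\ria\asymp\sija\asymp\mia^{1/4}$; then $\mja^{\pui}\mja^{1-n}=\mia^{-\frac n4}$, which is far larger than $\mia^{\pui}\ria^{1-n}\asymp\mia^{\frac{n-3}{4}}$ (here one must exploit $\tja\gtrsim d_g(\xia,\xja)$, not $\tja\gtrsim\mja$). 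The repair is the sharp form in both subcases at once: $\tja(x)\gtrsim\sija\sqrt{\mja/\mia}\ge\ria\sqrt{\mja/\mia}$, whence
\[
\mja^{\pui}\tja(x)^{1-n}\lesssim\mia^{\pui}\ria^{1-n}\Big(\frac{\mia}{\mja}\Big)^{\frac12}\lesssim\mia^{\pui}\tia(x)^{1-n}\asymp\tia(x)^{-1}\Bia(x),
\]
using $\tia\lesssim\ria$ on the ball. This is precisely the content of the gradient estimate in Lemma \ref{compbulles}, which you could have invoked as a black box, as the paper does; as written, your justification of it is insufficient.
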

\begin{proof}
By \eqref{defWa} and \eqref{contVa}  we have, for any $x \in M$
\[ |\mWa| \lesssim  B_{0,\alpha}(x) + \sum_{j=1}^k \Bja(x) \textrm{ and } |\nabla \mWa| \lesssim  B_{0,\alpha}(x) + \sum_{j=1}^k \tja(x)^{-1}\Bja(x) .\]
By \eqref{defti} we have $\tia \lesssim 1$ in $M$ and by definition of $\ria$ we always have $\Bia(x) \gtrsim B_{0,\alpha}(x)$ for $x \in B_g(\xia, \frac{2 \ria}{R_0})$. Let $j \neq i$. If $j \in \mathcal{A}_i$, Lemma \ref{compbulles} shows that $\Bja \lesssim \Bia$ and $|\nabla \Bja| \lesssim \tia^{-1} \Bia$ in $B_g(\xia, \frac{2 \ria}{R_0})$. Assume now that $\mja = o(\mia)$ but $j \not \in \mathcal{B}_i$. Then $d_g(\xia, \xja) > \frac{2 \ria}{R_0}$, thus $d_g(\xia, \xja) >> \mja$ and then for all $y \in B_g(\xia, \frac{3 \ria}{2R_0})$
\[ \Bja(y) \lesssim \frac{ \mja^{\pui} }{\ria^{n-2}} \lesssim  \frac{ \mia^{\pui} }{\ria^{n-2}} \lesssim \Bia(y) \textrm{ and }  |\nabla \Bja(y)| \lesssim \frac{ \mja^{\pui} }{\ria^{n-1}} \lesssim  \tia(y)^{-1} \Bia(y).  \]
\end{proof}

Let $i \in \{1, \dots, k\}$ and $j \in \mathcal{B}_i$. In view of \eqref{calcul3} it is important to understand when the profile $\Bja$ becomes dominant with respect to $\Bia$. It is an easy consequence of \eqref{defsij} and $\mja = o(\mia)$ that there exists $C>1$, only depending on $n$ and $(M,g)$ such that for all $y \in M$, 
 \ben \label{calcul}
\begin{aligned}
& \textrm{ if } d_g(\xja, y) \le \sjia \quad \textrm{ then } \quad \Bja(y) \ge \frac{1}{C}\Bia(y), \quad  \textrm{ and } \\
&  \textrm{ if }  \Bja(y) \ge \frac{1}{C}\Bia(y) \quad \textrm{ then } \quad d_g(\xja,y) \le  Cs_{j,i,\alpha}.
\end{aligned}
 \een
With \eqref{calcul3} a consequence is that, for any $x \in  B_g(\xia, \frac{3 \ria}{2 R_0}) \backslash \bigcup_{j \in \mathcal{B}_i} B_g(\xja, \sjia)$,
\ben \label{calcul5}
|\mWa(x)| \lesssim \sum_{j=0}^k \Bja(x) \lesssim \Bia(x)
 \een
holds. We will need a similar result for the derivatives of $V_{i,\alpha}$. Let $i \in \{1, \dots, k\}$ and $j \in \mathcal{B}_i$. We define a new radius $\rho_{j,i,\alpha}$ as follows:
\ben \label{defrhoia}
\bal 
\rjia & = 2  \left( \frac{\mja}{\mia} \right)^{\frac{n-2}{2(n-1)}} \big( d_g(\xia, \xja) + \mia \big).
\eal 
\een
If $j \in \mathcal{B}_i$, direct computations show again that there exists $C = C(n,g) >0$ such that  
\ben \label{calcul2}
\begin{aligned}
& \textrm{ if }  d_g(\xja,y) \le \rjia 
 \textrm{ then }  \tja(y)^{-1}\Bja(y) \ge \frac{1}{C} \tia(y)^{-1} \Bia(y) , \quad \textrm{ and }\\ 
& \textrm{ if }  \tja(y)^{-1}\Bja(y) \ge \frac{1}{C} \tia(y)^{-1} \Bia(y) \textrm{ then } d_g(\xja, y) \le C \rjia. 
 \end{aligned}
 \een
Note that, by definition, $\rjia = o(\ria)$ for $j \in \mathcal{B}_i$. Direct computations with \eqref{defsij} show that $ \rjia= 2  \left( \frac{\mja}{\mia} \right)^{-\frac{1}{2(n-1)}}\sjia$.  Since $j \in \mathcal{B}_i$ we have $\mja = o(\mia)$ and hence $\sjia = o(\rjia)$. With \eqref{calcul3}, \eqref{calcul5} and \eqref{calcul2} we obtain
\ben \label{calcul4}
\begin{aligned}
 |\mWa(x)| + \tia(x) &|\nabla \mWa(x)| \lesssim \Bia(x) \\
 & \quad \textrm{ for all } x \in B_g(\xia, \frac{3\ria}{2 R_0}) \backslash \bigcup_{j \in \mathcal{B}_i} B_g(\xja, \rjia).
\end{aligned}
  \een
 If finally $i \in \{1, \dots, k \}$ we define $\mathcal{C}_i$ to be the subset of $\mathcal{B}_i$ given by
\ben \label{defCi}
\mathcal{C}_i = \big \{ j \in \mathcal{B}_i \textrm{ such that } d_g(\xia, \xja) = O(\mia) \big \}.
\een
When $j \in \mathcal{C}_i$  one has $\sjia = O(\sqrt{\mia \mja})$ and $\rjia = O(\mja^{\frac{n-2}{2(n-1)}} \mia^{\frac{n}{2(n-1)}})$.

\subsection{Proof of Theorem \ref{proplin} part $1$: an inductive estimate} \label{proof1}

Let $(h_\alpha)_\alpha$ be a sequence of functions converging towards $h$ in $L^\infty(M)$, $k \ge 1$, $V_1, \dots, V_k $ in $D^{1,2}(\R^n)$ be $k$ solutions of \eqref{yamabe}, $u_0$ be a solution of \eqref{limiteq} and $(\xia)_{\alpha}$ and $(\mia)_\alpha$, $1 \le i \le k$ be $k$ sequences of points in $M$ and of positive real numbers satisfying \eqref{structure}. Let $(\mWa)_\alpha$ be as in \eqref{defWa}, let $(\ea)_\alpha$, $(\tau_\alpha)_\alpha$, $(\gamma_\alpha)_\alpha$ and $(R_\alpha)_\alpha$ be as in the statement of Theorem \ref{proplin}, and let $\vpa$ be the unique solution of \eqref{eqvpa}. We keep in the whole proof of Theorem \ref{proplin} the notations of subsections $3.2$ and $3.3$ and we will repeatedly use \eqref{ineqq} and \eqref{defsigmaa}.

\medskip 

We first prove \eqref{vp} which is easily obtained. Direct computations using  \eqref{propRa} give 
\[ \Vert  R_\alpha  \Vert_{L^{\frac{2n}{n+2}}(M)} \lesssim  \tau_\alpha + \ea +\sa \gamma_\alpha \]
for some sequence $(\sa)_\alpha$ satisfying \eqref{defsigmaa}, so that by Proposition \ref{propH1} $\vpa$ satisfies 
\[ \bal 
\Vert \vpa \Vert_{H^1(M)} & \lesssim \Vert (\triangle_g + 1)^{-1}(  R_\alpha) \Vert_{H^1(M)} \\
& \lesssim \Vert R_\alpha \Vert_{L^{\frac{2n}{n+2}}(M)}   \lesssim \tau_\alpha + \ea +\sa \gamma_\alpha .
\eal \]
Integrating \eqref{eqvpa} against $Z_{i,j,\alpha}$ for all $0 \le i \le k$ and $1 \le j \le n_i$ then yields, using \eqref{orthoZij} and the latter inequalities, that
\ben \label{proplin2}
 \sum \limits_{\underset{j =1 .. n_i }{i=0..k}} |\lija|  \lesssim \tau_\alpha + \ea +\sa \gamma_\alpha ,
\een
which proves \eqref{vp}.

\medskip

For $\alpha \ge 1$ we now let:
\ben \label{defaa}
a_{\alpha} = \left | \left| \frac{\vpa}{\sum_{i=0}^k \Bia}\right | \right|_{L^\infty(M)} + \left| \left|  \frac{\nabla \vpa}{B_{0,\alpha} + \sum_{i=1}^k \tia^{-1} \Bia}\right| \right|_{L^\infty(M)}
\een
where the $\Bia$ are as in \eqref{defBia} and $\tia$ as in \eqref{defti}. The proof of Theorem \ref{proplin} goes through a top-down induction in the bubble-tree. Starting from the highest bubbles we improve the naive control given by \eqref{defaa} on the area of influence of each bubble. We show that, up to some boundary terms, $\vpa$ is negligible with respect to $\Bia$ on $B_g(\xia, \frac{\ria}{R_0})$. These improved estimates are then propagated to the lower bubbles, and the induction parameter is the height in the bubble-tree. In this way we obtain refined estimates on $\vpa$ over the whole $M$ where $a_\alpha$ only appears in negligible terms. These estimates self-improve and yield \eqref{estopt}. 

\medskip

Up to renumbering the bubbles $V_{1, \alpha}, \dots, V_{k,\alpha}$ and up to passing to a subsequence for $(\xia)_\alpha$ and  $(\mia)_\alpha$, $
1 \le i \le k$, we assume that
\[ \mu_{1, \alpha} \ge \dots \ge \mu_{k,\alpha} \]
and we separate the family of bubbles $V_{1,\alpha}, \dots, V_{k,\alpha}$ in subfamilies concentrating at comparable speeds. Precisely, we define inductively 
\[ E_1 = \big \{i \in \{1, \dots, k\} \textrm{ such that }  \mu_{i,\alpha} \asymp \mu_{1, \alpha}\big  \}  = \big \{1, \dots, N_1 \big \} \]
for some $1 \le N_1 \le k$ and, assuming that we have defined $N_q$ for $q \ge 1$, we let 
\[ E_{q+1} = \big \{ i \ge N_q +1 \textrm{ such that } \mia \asymp \mu_{N_q+1, \alpha} \big \} = \big \{N_q + 1, \dots, N_{q+1} \} \]
for some $N_q + 1 \le N_{q+1} \le k $. There exists an integer $1 \le p \le k$ such that 
\ben \label{defp}
 \{1, \dots, k \} = E_1 \cup \dots \cup E_p . 
 \een
In a given subset $E_j$ all the bubbles concentrate with comparable speeds. We say that bubbles in $E_j$ have height $j$ in the bubble-tree. Note that defining $E_1, \dots, E_p$ as in \eqref{defp} does not involve $\vpa$ or the sequences $(\ea)_\alpha$, $(\tau_\alpha)_\alpha$, $(\gamma_\alpha)_\alpha$ and $(R_\alpha)_\alpha$ but only the configuration $V_{1, \alpha}, \dots, V_{k,\alpha}$.

\medskip

Let $\ell \in \{1, \dots, p\}$, where $p$ is as in \eqref{defp}. We consider the following two statements:

\bigskip

\textbf{Property $(H_{\ell,1})$:} There exists a sequence $(\sa^\ell)_\alpha$ as in  \eqref{defsigmaa} such that for all $q \ge \ell$, for all $i \in E_q$ and for all  $x \in B_g(\xia, \frac{\ria}{R_0})$ we have 
\ben \label{Hl1} \tag{$H_{\ell,1}$}
\begin{aligned}
|\vpa(x)| &\lesssim \big(\tau_\alpha + \ea  +  \sa^\ell( \gamma_\alpha + a_\alpha) \big)\Big( \Bia(x) + \sum_{j \in \mathcal{B}_i} \Bja(x) \Big) + a_\alpha \frac{\mia^{\pui}}{\ria^{n-2}}\\
& +\gamma_\alpha  \sum_{i=1}^k \Tia(x) B_i(x)
\end{aligned} 
\een
where $\mathcal{B}_i$ is as in \eqref{defBi}, $\tia$ as in \eqref{defti} and $\Tia$ as in \eqref{defTi}.

\textbf{Property $(H_{\ell,2})$:} There exists a sequence $(\sa^\ell)_\alpha$ as in  \eqref{defsigmaa} such that for all $q \ge \ell$, for all $i \in E_q$ and for all $x \in M$ we have 
\ben \label{Hl2} \tag{$H_{\ell,2}$}
\begin{aligned}
&\int_{B_g(\xia, \frac{\ria}{R_0})} d_g(x, \cdot)^{2-n} \big( \sum_{i=0}^k \Bia \big)^{2^*-2}|\vpa| dv_g \\
 &\lesssim \big(\tau_\alpha + \ea  +  \sa^\ell( \gamma_\alpha + a_\alpha) \big)\Big( \Bia(x) + \sum_{j \in \mathcal{B}_i} \Bja(x) \Big)  \quad \textrm{ and } \\
 &\int_{B_g(\xia, \frac{\ria}{R_0})} d_g(x, \cdot)^{1-n} \big( \sum_{i=0}^k \Bia \big)^{2^*-2}|\vpa| dv_g \\
 &\lesssim \big(\tau_\alpha + \ea  +  \sa^\ell( \gamma_\alpha + a_\alpha) \big) \Big( B_{0,\alpha}(x) + \sum_{i=1}^k \tia(x)^{-1}\Bia(x) \Big).
 \end{aligned} 
\een
Note that if $q \in \{1, \dots, p \}$, $i \in E_q$ and $x \in B_g(\xia, \frac{\ria}{R_0})$, then by \eqref{calcul3} we have
\[  \Bia(x) + \sum_{j \in \mathcal{B}_i} \Bja(x) \lesssim \sum_{j \in E_s, s \ge q} \Bja(x) \lesssim \Bia(x) + \sum_{j \in \mathcal{B}_i} \Bja(x).  \]
The right-hand side of \eqref{Hl1} thus states that $\vpa$ is negligible with respect to the sum of all the (positive) bubbles in $\cup_{q \ge \ell} E_q$. The induction process is given by the following Proposition, that we prove in the next subsection:

\begin{prop} \label{recurrence}
For any $1 \le \ell \le p$, $(H_{\ell,1})$ and $(H_{\ell,2})$ are true.
\end{prop}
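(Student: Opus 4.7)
The plan is to establish Proposition \ref{recurrence} by finite downward induction on $\ell$, starting at the base case $\ell = p$ and decreasing to $\ell = 1$. At each level I would prove $(H_{\ell,1})$ and $(H_{\ell,2})$ simultaneously, since these are coupled: $(H_{\ell,2})$ is essentially what one obtains by convolving $(H_{\ell,1})$ with $d_g(x,\cdot)^{2-n}$ (the Green function decay), while $(H_{\ell,1})$ is obtained by inserting $(H_{\ell,2})$ for all higher bubbles into the representation formula
\[
\vpa(x) = \int_M G_{h_\alpha}(x,y)\Bigl[f'(\mWa)\vpa + R_\alpha + \sum_{i,j}\lija (\triangle_g+1)Z_{i,j,\alpha}\Bigr](y)\,dv_g(y),
\]
combined with the pointwise bound $|f'(\mWa)| \lesssim (\sum_i \Bia)^{2^*-2}$, the estimate \eqref{proplin2} on $\lija$, and the direct integration of \eqref{propRa}. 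When $\triangle_g + h$ has nontrivial kernel the representation needs to be corrected by subtracting the projection onto $K_0$, which introduces the $B_{0,\alpha}$ factor in a controlled way.

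For the base case $\ell = p$, bubbles $i \in E_p$ have the smallest concentration scale, so $\mathcal{B}_i = \emptyset$ and \eqref{calcul4} gives $|\mWa| + \tia|\nabla \mWa| \lesssim \Bia$ throughout their area of influence. I would rescale by setting $\tilde\vpa(y) = \mia^{\pui}\vpa(\exp_{\xia}(\mia y))/a_\alpha$. By the definition of $a_\alpha$ this is uniformly bounded on compact subsets of $\R^n$, and elliptic regularity gives $C^1_{\textrm{loc}}$ convergence (after a subsequence) to a function $\tilde\vp_\infty \in D^{1,2}(\R^n)$ solving $\triangle_\xi \tilde\vp_\infty = (2^*-1)|V_i|^{2^*-2}\tilde\vp_\infty$, hence $\tilde\vp_\infty \in K_{V_i}$. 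The orthogonality $\vpa \perp Z_{i,j,\alpha}$ in $H^1(M)$ transfers in the limit to $\tilde\vp_\infty \perp_{D^{1,2}} Z_{i,j}$, giving $\tilde\vp_\infty \equiv 0$. Feeding this local decay back into the representation formula and integrating $|G_{h_\alpha}(x,y)|\lesssim d_g(x,y)^{2-n}$ against $|R_\alpha|$ and $(\sum_j \Bja)^{2^*-2}|\vpa|$ yields $(H_{p,2})$, and hence $(H_{p,1})$.

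For the inductive step, assume $(H_{\ell+1,1})$ and $(H_{\ell+1,2})$, and take $i \in E_\ell$. Now $\mathcal{B}_i$ may be nonempty, and on $B_g(\xia, \ria/R_0)$ the sum $\mWa$ is only dominated by $\Bia + \sum_{j\in\mathcal{B}_i}\Bja$. I would split this ball according to the radii $s_{j,i,\alpha}$ and $\rjia$ of \eqref{defsij}, \eqref{defrhoia}: on the complement of $\bigcup_{j\in\mathcal{B}_i}B_g(\xja,s_{j,i,\alpha})$ one recovers $|\mWa|\lesssim \Bia$ by \eqref{calcul5} and the base-case rescaling argument around $\xia$ applies; on each ball $B_g(\xja,s_{j,i,\alpha})$ the inductive hypothesis $(H_{\ell+1,1})$ already provides the sharp pointwise bound on $\vpa$. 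The key point is that when one integrates $d_g(x,y)^{2-n}(\sum \Bia)^{2^*-2}|\vpa|$ over $B_g(\xia,\ria/R_0)$, the contribution from each $B_g(\xja,s_{j,i,\alpha})$ computed via $(H_{\ell+1,2})$ yields the allowed $\Bja$ term in $(H_{\ell,2})$, while the complement contributes only $(\ta + \ea + \sa^\ell(\gamma_\alpha+a_\alpha))\Bia$. Reinserting into the representation formula gives $(H_{\ell,1})$ and completes the step.

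The main obstacle is the rescaling argument at each bubble $i \in E_\ell$, where I must extract a nontrivial limit after dividing by $a_\alpha$ without knowing a priori that $a_\alpha$ stays bounded. Crucially, the conclusion $\tilde\vp_\infty = 0$ uses the orthogonality of $\vpa$ to the entire space $K_\alpha = K_0 \oplus \bigoplus_i K_{i,\alpha}$, but the rescaling around $\xia$ only sees the block $K_{i,\alpha}$; one must check that the other blocks $K_{j,\alpha}$ and $K_0$ contribute $o(1)$ terms in the rescaled inner product, using the separation \eqref{structure} and the decay estimate \eqref{estZ}. A second subtle point is the bookkeeping of the error sequences $\sa^\ell$: at each induction step one picks up new $\sa$ terms (from the convergence of $h_\alpha$, from $s_{j,i,\alpha}/\mia \to \infty$, and from the rescaling of the kernel projections), and one must verify that after finitely many steps they still satisfy \eqref{defsigmaa} uniformly, which is ensured since the induction terminates at $\ell = 1$ after $p \le k$ steps.
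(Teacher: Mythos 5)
Your overall skeleton (downward induction on the height $\ell$, representation formula with Green's function decay, rescaling at $\xia$ plus orthogonality to $K_{V_i}$) matches the paper's strategy, but the central quantitative step is wrong as proposed. You rescale as $\tilde\vpa(y)=\mia^{\pui}\vpa(\exp_{\xia}(\mia y))/a_\alpha$ and claim the limit solves the homogeneous equation $\triangle_\xi \tilde\vp_\infty=(2^*-1)|V_i|^{2^*-2}\tilde\vp_\infty$, hence lies in $K_{V_i}$ and vanishes by orthogonality. This fails for two reasons. First, the inhomogeneous terms in \eqref{eqvpa} rescale to size $(\ta+\ea+\sa\ga)\,\mia^{1-\frac n2}$, and there is no a priori comparison between $\ta+\ea+\ga$ and $a_\alpha$ (the theorem's eventual conclusion is precisely $a_\alpha\lesssim \ta+\ea+\ga$, so generically these terms do \emph{not} vanish after division by $a_\alpha$); the limit equation is therefore not homogeneous and the conclusion $\tilde\vp_\infty\in K_{V_i}$ is unjustified. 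Second, even granting $\tilde\vp_\infty\equiv 0$, this only says $\vpa=o(a_\alpha)\mia^{1-\frac n2}$ locally, with an $o(1)$ depending on $\vpa$ and $R_\alpha$: since $a_\alpha$ is a global sup that may be attained far from $\xia$, vanishing of the limit carries no contradiction and no quantitative gain, and in particular it cannot produce the coefficients $\ta+\ea+\sa^\ell(\ga+a_\alpha)$ and the boundary term $a_\alpha\mia^{\pui}\ria^{2-n}$ required in \eqref{Hl1}, nor a sequence $\sa^\ell$ depending only on $(\xia)_\alpha,(\mia)_\alpha,(h_\alpha)_\alpha$ as \eqref{defsigmaa} demands (this uniformity is what makes Theorem \ref{proplin} usable in the fixed-point argument of Proposition \ref{nonlin2}).

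The paper's proof avoids this by a different dichotomy. Steps 1--2 use the representation formula (with the Green's function of $\triangle_g+h$, after projecting off $\ker(\triangle_g+h)$) and a Giraud-type self-improving iteration to reduce \eqref{Hl1}--\eqref{Hl2} to a single quantity, $\Vert\vpa\Vert_{L^\infty(\Oia)}$ with $\Oia=B_g(\xia,\frac{\ria}{R_0})\setminus\bigcup_{j\in\mathcal{B}_i}B_g(\xja,\rjia)$; the neck regions between $\rjia$ and the balls $B_g(\xja,\frac{\rja}{R_0})$ are shown to contribute only $\sa a_\alpha(\Bia+\sum_{j\in\mathcal{B}_i}\Bja)$, a point your splitting by $\sjia$ glosses over since $(H_{\ell+1,1})$ only controls $\vpa$ on $B_g(\xja,\frac{\rja}{R_0})$. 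Then Lemma \ref{lemmeestnorme} bounds $\Vert\vpa\Vert_{L^\infty(\Oia)}$ by contradiction, normalizing by $\Vert\vpa\Vert_{L^\infty(\Oia)}$ itself rather than by $a_\alpha$: the contradiction hypothesis \eqref{proplin20} is exactly what kills the $R_\alpha$ and $\lija$ terms in the limit (so the limit genuinely lies in $K_{V_i}$), while the normalization forces $\sup|\psia|=1$, and the $C^1$ control near the punctures $\hat x_{j,\infty}$ (this is where the choice of the radius $\rjia$ instead of $\sjia$ is essential) rules out that the maximum escapes to a puncture, so the limit is nontrivial and contradicts $\psi_\infty\in K_{V_i}^{\perp}$. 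You would need to replace your rescaling step by such a comparison between $\Vert\vpa\Vert_{L^\infty(\Oia)}$ and $(\ta+\ea+\sa(\ga+a_\alpha))\mia^{1-\frac n2}$, with an explicitly specified $\sa$ as in \eqref{proplin191}, for the induction to close.
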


\subsection{Proof of Proposition \ref{recurrence}.} \label{proof2}

We prove Proposition \ref{recurrence} by reverse induction on $\ell$. The proofs of the initial case $\ell = p$ and of the induction step are very similar, hence we prove them together.  Let $\ell \in \{1, \dots, p\}$. If $\ell \le p-1$ we assume that $(H_{\ell+1,1})$ and $(H_{\ell+1,2})$ hold true, while if $\ell = p$ we do not assume anything. Let $i \in E_\ell$ where $E_\ell$ is as in  \eqref{defp}. 

\medskip

\textbf{Step $1$: improved estimates in $B_g(\xia, \frac{\ria}{R_0})$.}  Let $G_h$ be the Green's function of $\triangle_g + h$ in $M$. It is by definition orthogonal to the kernel of $\triangle_g +h$ and by standard results (see e.g. Aubin \cite{Aub} or Robert \cite{RobDirichlet}) there exists $C>0$ only depending on $(M,g)$ and $h$ such that $|G_h(x,y)| \le C d_g(x,y)^{2-n}$ and $|\nabla_{y} G_h(x,y)| \le C d_g(x,y)^{1-n}$ for all $x \neq y$ in $M$. First, using \eqref{greenB} and \eqref{greenBij} below and \eqref{propRa} it is easily seen that there exists a sequence $(\sa)_\alpha$ as in \eqref{defsigmaa} such that for any sequence $(\xa)_\alpha \in M$
\ben \label{proplin1}
\begin{aligned}
& \int_M G_h(\xa, \cdot) R_\alpha dv_g \\
& \lesssim \big(\tau_\alpha + \ea  +  \sa^\ell \gamma_\alpha \big)\sum_{i=0}^k \Bia(\xa)  +\gamma_\alpha \sum_{i=1}^k \Tia(\xa) B_i(\xa) 
\end{aligned}
\een
holds. We write a representation formula for $\vpa$ in the whole of $M$. Since $M$ has no boundary we have, for any sequence  $(\xa)_\alpha \in B_g(\xia, \frac{\ria}{R_0})$:
\[\begin{aligned}
 \vpa(\xa) - \Pi_{K_h}(\vpa)(\xa) & = \int_{M} G_h(\xa, \cdot) (\triangle_g + h) \vpa dv_g, \\ 
  \end{aligned} \]
 where we have let $K_h = \ker(\triangle_g + h)$ and where $\Pi_{K_h}$ denotes the orthogonal projection on $K_{h}$ for the $H^1(M)$ scalar product. We first claim that 
 \ben \label{proplin101}
 \big|\Pi_{K_h}(\vpa)(\xa)\big| \lesssim  \frac{\mia^{\pui}}{\ria^{n-2}} a_\alpha
 \een 
 holds. Indeed, integrating by parts and using \eqref{defaa} yields
 \[ \big| \Pi_{K_h}(\vpa)(\xa) \big| \lesssim \int_M |\vpa|dv_g \lesssim a_\alpha B_{0,\alpha}(\xa) + \sum_{j=1}^k \mja^{\pui} a_\alpha.   \]
If $B_{0,\alpha} \not \equiv 0$ we have by \eqref{defri} $\ria \le \sqrt{\mia}$ and hence $a_\alpha B_{0,\alpha}(\xa) \lesssim \frac{\mia^{\pui}}{\ria^{n-2}} a_\alpha$. Since $\ria \lesssim 1$ we also have $\mja^{\pui} a_\alpha \lesssim  \frac{\mia^{\pui}}{\ria^{n-2}} a_\alpha$ for all $j$ such that $\mja \lesssim \mia$. Finally, if $\mja >> \mia$, and since $\xa \in B_g(\xia, \frac{2 \ria}{R_0})$, we use Lemma \ref{compbulles} to write that 
\[ a_\alpha \mja^{\pui} \lesssim a_\alpha \Bja(\xa) \lesssim  \frac{\mia^{\pui}}{\ria^{n-2}} a_\alpha. \]
These considerations prove \eqref{proplin101}. We now claim that 
 \ben \label{proplin102}
\bal
 & \int_{M\backslash B_g(\xia, \frac{3\ria}{2R_0})} d_g(\xa,\cdot)^{2-n}  \big( \sum_{i=0}^k \Bia)^{2^*-2}|\vpa| dv_g \\
 &\lesssim  \sa a_\alpha \Big(  \Bia(\xa) + \sum_{j \in \mathcal{B}_i} \Bja(\xa) \Big) + \frac{\mia^{\pui}}{\ria^{n-2}} a_\alpha
\eal
 \een
for some sequence $(\sa)_\alpha$ satisfying \eqref{defsigmaa}. Let $\tilde{\mathcal{B}}_i \subset \mathcal{B}_i$ be defined by 
\[ \tilde{\mathcal{B}}_i = \Big \{ j \in \mathcal{B}_i \textrm{ such that } d_g(\xia, \xja) \le \frac{5\ria}{4R_0} \Big \}  .\]
Let also 
\[ \mathcal{D}_i = \Big \{ j \in \{1, \dots, k\} \textrm{ such that } \mja = o(\mia) \textrm{ and }  d_g(\xia, \xja) > \frac{5\ria}{4R_0} \Big \}.  \]
We can write:
\ben \label{proplin103}
 \bal
 & \int_{M\backslash B_g(\xia, \frac{3\ria}{2R_0})} d_g(\xa,\cdot)^{2-n}  \big( \sum_{i=0}^k \Bia)^{2^*-2}|\vpa| dv_g \\
 & \lesssim \int_{M\backslash B_g(\xia, \frac{3\ria}{2R_0})}  a_\alpha d_g(\xa,\cdot)^{2-n} \times \\
 &\quad \quad \Big( \Bia^{2^*-1} + \sum_{j \in \tilde{\mathcal{B}}_i} \Bja^{2^*-1}  + \sum_{j \in \mathcal{D}_i} \Bja^{2^*-1} + \sum_{j \in \mathcal{A}_i} \Bja^{2^*-1} \Big) dv_g \\
 &\lesssim \sa a_\alpha \Big( \Bia(\xa) + \sum_{j \in \mathcal{B}_i} \Bja(\xa) \Big)+ a_\alpha \sum_{j \in \mathcal{D}_i} \Bja(\xa)  \\
 & + a_\alpha \sum_{j \in \mathcal{A}_i} \Bja(\xa),
 \eal 
\een
 where the last line follows from \eqref{decbulle} below. With Lemma \ref{compbulles}, and since $\xa \in B_g(\xia, \frac{\ria}{R_0})$, we have $ a_\alpha \sum_{j \in \mathcal{A}_i} \Bja(\xa) \lesssim  \frac{\mia^{\pui}}{\ria^{n-2}} a_\alpha$. If $j \in \mathcal{D}_i$ and since $\xa \in B_g(\xia, \frac{\ria}{R_0})$, we have $d_g(\xja, \xa) \gtrsim \ria$, so that
 \[  a_\alpha \sum_{j \in \mathcal{D}_i}  \Bja(\xa) \lesssim \frac{\mia^{\pui}}{\ria^{n-2}} a_\alpha.\]
 Together with \eqref{proplin103} this proves \eqref{proplin102}. Using \eqref{proplin2}, \eqref{proplin1}, \eqref{proplin101}, \eqref{proplin102} and \eqref{greenB} below the representation formula gives, for any sequence $(\xa)_\alpha$ in $B_g(\xia, \frac{\ria}{R_0})$, 
\ben \label{proplin3}
\begin{aligned}
& |\vpa(\xa)| \\
& \lesssim \big(\tau_\alpha + \ea  +  \sa^\ell( \gamma_\alpha + a_\alpha) \big)\Big( \Bia(\xa) + \sum_{j \in \mathcal{B}_i} \Bja(\xa) \Big) \\
&+ a_\alpha \frac{\mia^{\pui}}{\ria^{n-2}}   +\gamma_\alpha \sum_{j=1}^k \Tja(\xa) \Bja(\xa)  \\
&+ \int_{B_g(\xia, \frac{3 \ria}{2R_0})} d_g(\xa, \cdot)^{2-n} |\mathcal{W}_\alpha|^{2^*-2}|\vpa| dv_g. 
\end{aligned}
\een
Note that the sum $\sum_{j \in \mathcal{B}_i} \Bja(x)$ vanishes when $\ell = p$.  To estimate the integral in \eqref{proplin3} we remark that 
\[ \begin{aligned}
 &B_g(\xia, \frac{3 \ria}{2 R_0})  \subseteq\\
& \Bigg[ \Big(B_g(\xia, \frac{3 \ria}{2 R_0}) \backslash B_g(\xia, \frac{ \ria}{R_0}) \Big) \backslash \bigcup_{j \in \mathcal{B}_i}   B_g(\xja, \rjia)  \Bigg] \\
 &  \bigcup \Bigg[  B_g(\xia, \frac{ \ria}{R_0})  \backslash \bigcup_{j \in \mathcal{B}_i}   B_g(\xja, \rjia) \Bigg] \\
  & \bigcup_{j \in \mathcal{B}_i}  \Big( B_g(\xja, \rjia) \backslash \cup_{s \in \mathcal{B}_i} B_g(x_{s, \alpha}, \frac{r_{s,\alpha}}{R_0}) \Big) \cap B_g(\xia, \frac{3 \ria}{2 R_0}) \Big) \\
  & \bigcup_{j \in \mathcal{B}_i } B_g(x_{j, \alpha}, \frac{r_{j,\alpha}}{R_0}) 
  \end{aligned}
 \]
 where $\rjia$ is as in \eqref{defrhoia}. Let $y \in \Big(B_g(\xia, \frac{3 \ria}{2 R_0}) \backslash B_g(\xia, \frac{ \ria}{R_0}) \Big) \backslash \bigcup_{j \in \mathcal{B}_i}   B_g(\xja, \rjia) $. By \eqref{calcul4} we have 
 \[ |\mathcal{W}_\alpha(y)| \lesssim \Bia(y) \lesssim \frac{\mia^{\pui}}{\ria^{n-2}}. \]
Straightforward computations with \eqref{defaa} then yield
 \ben \label{proplin4}
\begin{aligned}
 \int \limits_{\big(B_g(\xia, \frac{3 \ria}{2 R_0}) \backslash B_g(\xia, \frac{ \ria}{R_0}) \big) \backslash \bigcup_{j \in \mathcal{B}_i}   B_g(\xja, \rjia) }&d_g(\xa, \cdot)^{2-n} |\mathcal{W}_\alpha|^{2^*-2}|\vpa| dv_g  \\
 & \lesssim   \sa a_\alpha \Bia(\xa).
\end{aligned}
 \een
By definition of $\mathcal{W}_\alpha$ we have $|\mathcal{W}_\alpha| \lesssim \sum_{i=0}^k \Bia$. Since $i \in E_\ell$, if $j \in \mathcal{B}_i$ we have $j \in E_{q}$ for some $q \ge \ell+1$, where $E_{\ell}$ and $E_q$ are as in \eqref{defp}. The induction property $(H_{\ell+1, 2})$ together with \eqref{calcul4} then shows that there exists a sequence $(\sa^{\ell+1})_\alpha$ satisfying \eqref{defsigmaa} such that 
 \ben \label{proplin5}
 \begin{aligned}
& \int \limits_{\bigcup_{j \in \mathcal{B}_i } B_g(x_{j, \alpha}, \frac{r_{j,\alpha}}{R_0}) } d_g(\xa, \cdot)^{2-n} |\mathcal{W}_\alpha|^{2^*-2}|\vpa| dv_g \\
 &\lesssim \big(\tau_\alpha + \ea  +  \sa^{\ell+1}( \gamma_\alpha + a_\alpha) \big)\Big( \Bia(\xa) + \sum_{j \in \mathcal{B}_i} \Bja(\xa) \Big). \\
 \end{aligned}
 \een
The latter integral only has to be computed $\ell \le p-1$, otherwise it vanishes. Remark now that, by \eqref{calcul3} and \eqref{defaa}, we have
\be
 |\mathcal{W}_\alpha|^{2^*-2} |\vpa| \lesssim a_\alpha \Bia^{2^*-1} + a_\alpha  \sum_{j \in \mathcal{B}_i } \Bja^{2^*-1}
 \ee
in  $ \bigcup_{j \in \mathcal{B}_i}  \Big( B_g(\xja, \rjia) \backslash \cup_{s \in \mathcal{B}_i} B_g(x_{s, \alpha}, \frac{r_{s,\alpha}}{R_0}) \Big) \cap B_g(\xia, \frac{3\ria}{2 R_0}) $. On the one hand, straightforward computations using \eqref{decbulle} below give 
\[\begin{aligned}
&\int_{ \bigcup_{j \in \mathcal{B}_i}  \Big( B_g(\xja, \rjia) \backslash \cup_{s \in \mathcal{B}_i} B_g(x_{s, \alpha}, \frac{r_{s,\alpha}}{R_0}) \Big)\cap B_g(\xia, \frac{3 \ria}{2 R_0})  } d_g(\xa, \cdot)^{2-n} a_\alpha  \sum_{j \in \mathcal{B}_i } \Bja^{2^*-1} dv_g \\
& \lesssim 
\sa a_\alpha  \sum_{j \in \mathcal{B}_i} \Bja(\xa) 
\end{aligned}
 \]
 for some sequence $(\sa)_\alpha$ satisfying \eqref{defsigmaa}. On the other hand, if $j \in \mathcal{B}_i$, 
  \[ \begin{aligned}
&  \int_{  B_g(\xja, \rjia) \backslash \cup_{s \in \mathcal{B}_i} B_g(x_{s, \alpha}, \frac{r_{s,\alpha}}{R_0})\cap B_g(\xia, \frac{ 3\ria}{2 R_0})  } d_g(x, \cdot)^{2-n}  a_\alpha \Bia^{2^*-1} dv_g\\ 
& \le  \int_{   B_g(\xja, \rjia) } d_g(x, \cdot)^{2-n}  a_\alpha \Bia^{2^*-1} dv_g \\
& \lesssim \left \{ \begin{aligned} & \left( \frac{\rjia}{\mia}\right)^2 a_\alpha \Bia(x) & \textrm{ if } d_g(\xia, \xja) = O(\mia) \\  & \left( \frac{\mia}{d_g(\xia, \xja)}\right)^2 a_\alpha \Bia(x) & \textrm{ otherwise }  \end{aligned} \right. \\
& \lesssim \sa a_\alpha \Bia(x).
 \end{aligned}
 \]
We have used that, by \eqref{defrhoia}, we have $\rjia = o(\mia)$ in the first case and $\rjia = o(d_g(\xia, \xja))$ in the second case. Combining the last two estimates shows that
\ben \label{proplin6}
\begin{aligned}
&\int_{ \bigcup_{j \in \mathcal{B}_i}  \Big( B_g(\xja, \rjia) \backslash \cup_{s \in \mathcal{B}_i} B_g(x_{s, \alpha}, \frac{r_{s,\alpha}}{R_0}) \Big) \cap B_g(\xia, \frac{ 3\ria}{2 R_0}) } d_g(\xa, \cdot)^{2-n} |\mathcal{W}_\alpha|^{2^*-2}|\vpa| dv_g \\
& \lesssim  \sa a_\alpha\Big( \Bia(\xa) + \sum_{j \in \mathcal{B}_i} \Bja(\xa) \Big) 
\end{aligned}
\een
for some sequence $(\sa)_\alpha$ satisfying \eqref{defsigmaa}. Estimate \eqref{proplin6} shows that in the representation formula the ``neck regions'' $ B_g(\xja, \rjia) \backslash \cup_{s \in \mathcal{B}_i} B_g(x_{s, \alpha}, \frac{r_{s,\alpha}}{R_0})$ for $j \in \mathcal{B}_i$ only contribute to  $\vpa$ with a term that is negligible compared to  $\Bia + \sum_{j \in \mathcal{B}_i} \Bja$. This shows why we only need to prove the induction property on $B_g(\xia, \frac{\ria}{R_0})$ in \eqref{Hl1}. We finally let 
\ben \label{defOia}
\Omega_{i,\alpha} = B_g(\xia, \frac{ \ria}{R_0})  \backslash \bigcup_{j \in \mathcal{B}_i}   B_g(\xja, \rjia).
\een
With \eqref{calcul4} and Giraud's lemma we obtain 
\ben \label{proplin7} 
\int_{\Oia} d_g(\xa, \cdot)^{2-n} |\mathcal{W}_\alpha|^{2^*-2}|\vpa| dv_g \lesssim \left( \frac{\mia}{\tia(\xa)}\right)^2 \Vert \vpa \Vert_{L^\infty(\Oia)}. 
\een
Gathering \eqref{proplin4}, \eqref{proplin5}, \eqref{proplin6} and \eqref{proplin7} in \eqref{proplin3} we have thus shown that there exists a sequence $(\sa)_\alpha$ as in \eqref{defsigmaa} such that, for any sequence $(\xa)_\alpha \in B_g(\xia, \frac{\ria}{R_0})$,
\ben \label{proplin71} \begin{aligned}
& |\vpa(\xa)|  \lesssim \big(\tau_\alpha + \ea  +  \sa^\ell( \gamma_\alpha + a_\alpha) \big)\Big( \Bia(\xa) + \sum_{j \in \mathcal{B}_i} \Bja(\xa) \Big) \\ &+ a_\alpha \frac{\mia^{\pui}}{\ria^{n-2}}   +\gamma_\alpha \sum_{j=1}^k \Tja(\xa) \Bja(\xa)   + \left( \frac{\mia}{\tia(\xa)}\right)^2 \Vert \vpa \Vert_{L^\infty(\Oia)}\end{aligned}
\een
and 
\ben \label{proplin72} 
\begin{aligned}
 &\int_{B_g(\xia, \frac{3\ria}{2 R_0})} d_g(\xa, \cdot)^{2-n}\Big( \sum_{i=0}^k \Bia \Big)^{2^*-2}|\vpa| dv_g \\
 &\lesssim  \left( \frac{\mia}{\tia(\xa)}\right)^2 \Vert \vpa \Vert_{L^\infty(\Oia)} \\
 &+ \big(\tau_\alpha + \ea  +  \sa^\ell( \gamma_\alpha + a_\alpha) \big)\Big( \Bia(\xa) + \sum_{j \in \mathcal{B}_i} \Bja(\xa) \Big)  
 \end{aligned}
\een
hold, where $\Oia$ is as in \eqref{defOia}. We can now use \eqref{proplin71} to compute again the right-hand side of \eqref{proplin7} with a greater precision. After a finite number of iterations we obtain that there exists a sequence $(\sa^{\ell,1})_\alpha$ as in \eqref{defsigmaa} such that
\ben \label{proplin8} 
\begin{aligned}
& |\vpa(\xa)|  \lesssim  a_\alpha \frac{\mia^{\pui}}{\ria^{n-2}} +\gamma_\alpha  \sum_{j=1}^k \Tja(\xa) \Bja(\xa) \\
& + \Big(\tau_\alpha + \ea  +  \sa^{\ell,1}( \gamma_\alpha + a_\alpha)  + \mia^{\pui}  \Vert \vpa \Vert_{L^\infty(\Oia)}\Big)\Big( \Bia(\xa) + \sum_{j \in \mathcal{B}_i} \Bja(\xa) \Big), \\  
 \end{aligned}
\een
and
\ben \label{proplin9}
\begin{aligned}
& \int_{B_g(\xia, \frac{3\ria}{2 R_0})} d_g(\xa, \cdot)^{2-n}\Big( \sum_{i=0}^k \Bia \Big)^{2^*-2}|\vpa| dv_g \\
& \lesssim \Big(\tau_\alpha + \ea  +  \sa^{\ell,1}( \gamma_\alpha + a_\alpha)  + \mia^{\pui}  \Vert \vpa \Vert_{L^\infty(\Oia)}\Big)\Big( \Bia(\xa) + \sum_{j \in \mathcal{B}_i} \Bja(\xa) \Big)
\end{aligned}
\een
hold for any sequence $(\xa)_\alpha \in B_g(\xia, \frac{\ria}{R_0})$.

 \medskip

 \textbf{Step $2$: Gradient estimates on $\Oia$.} First, using \eqref{propRa}, \eqref{greenBder} and \eqref{greenBijder} below we have, for any sequence $(\xa)_\alpha$ in $M$,
 \[
 \begin{aligned}
\int_M d_g(\xa, \cdot)^{1-n} R_\alpha dv_g &\lesssim  \big(\tau_\alpha + \ea  +  \sa \gamma_\alpha \big) \Big( B_{0,\alpha}(\xa) + \sum_{j=1}^k \tja(\xa)^{-1}\Bja(\xa) \Big)\\
& +\gamma_\alpha \sum_{j=1}^k \Xi_{j,\alpha}(\xa) \Bja(\xa) ,
\end{aligned}
\]
where $\Xia$ is as in \eqref{defXia} below. As in Step $1$ we write a representation formula for $\vpa$ in $M$, that we differentiate once with respect to the variable $x$. The details are very similar to those of Step $1$ and we omit most of them. We then obtain that, for any sequence $(\xa)_\alpha \in B_g(\xia, \frac{\ria}{R_0})$:
\ben \label{proplin81}
\begin{aligned}
& |\nabla \vpa(\xa)| \\
& \lesssim \big( \tau_\alpha + \ea  +  \sa^\ell( \gamma_\alpha + a_\alpha)  \big) \Big( B_{0,\alpha}(\xa) + \sum_{j=1}^k \tja(\xa)^{-1}\Bja(\xa) \Big) \\
& +\gamma_\alpha \sum_{j=1}^k \Xi_{j,\alpha}(\xa) \Bja(\xa) + a_\alpha \frac{\mia^{\pui}}{\ria^{n-1}}\\
& + \int_{B_g(\xia, \frac{3\ria}{2R_0})} d_g(\xa, \cdot)^{1- n} \Big( \sum_{i=0}^k \Bia \Big)^{2^*-2}|\vpa| dv_g. 
\end{aligned}
\een
The integral over $B_g(\xia, \frac{3 \ria}{2R_0})$ is computed by following the exact arguments that led to \eqref{proplin4},  \eqref{proplin5},  \eqref{proplin6} and  \eqref{proplin7}, where the induction argument now relies on the second inequality in \eqref{Hl2}. This gives in the end
\[
 \begin{aligned}
|\nabla \vpa(\xa)| & \lesssim\big( \tau_\alpha + \ea  +  \sa( \gamma_\alpha + a_\alpha)  \big) \Big(B_{0,\alpha}(\xa) +  \sum_{j=1}^k \tja(\xa)^{-1}\Bja(\xa) \Big)\\
&+ a_\alpha \frac{\mia^{\pui}}{\ria^{n-1}}   +\gamma_\alpha  \sum_{j=1}^k \Xi_{j,\alpha}(\xa) \Bja(\xa)   + \frac{\mia^2}{\tia(\xa)^3} \Vert \vpa \Vert_{L^\infty(\Oia)}\end{aligned}
\]
and 
\[ 
\begin{aligned}
 &\int_{B_g(\xia, \frac{3 \ria}{2R_0})} d_g(\xa, \cdot)^{1-n} \Big( \sum_{i=0}^k \Bia \Big)^{2^*-2}|\vpa| dv_g \lesssim  \frac{\mia^2}{\tia(\xa)^3} \Vert \vpa \Vert_{L^\infty(\Oia)} \\
 & +\Big( \tau_\alpha + \ea  +  \sa( \gamma_\alpha + a_\alpha)  + \mia^{\pui}  \Vert \vpa \Vert_{L^\infty(\Oia)}\Big)\Big(B_{0,\alpha}(\xa) +  \sum_{j=1}^k \tja(\xa)^{-1}\Bja(\xa) \Big)
\end{aligned}
\]
for any sequence $(\xa)_\alpha \in B_g(\xia, \frac{ \ria}{R_0})$, where $\Oia$ is as in \eqref{defOia} and $(\sa)_\alpha$ is a sequence satisfying \eqref{defsigmaa}. As before these two inequalities self-improve after a finite number of iterations. We thus obtain that there exists a sequence $(\sa^{\ell,2})_\alpha$ satisfying \eqref{defsigmaa} such that, for any sequence $(\xa)_\alpha \in B_g(\xia, \frac{\ria}{R_0})$, 
\ben \label{proplin821} 
\begin{aligned}
& |\nabla \vpa(\xa)|  \lesssim  a_\alpha \frac{\mia^{\pui}}{\ria^{n-1}} +\gamma_\alpha \sum_{j=1}^k \Xi_{j,\alpha}(\xa) \Bja(\xa) \\
&+  \Big( \tau_\alpha + \ea  +  \sa^{\ell,2}( \gamma_\alpha + a_\alpha)  + \mia^{\pui}  \Vert \vpa \Vert_{L^\infty(\Oia)}\Big) \times \\
& \quad \quad \Big(B_{0,\alpha}(\xa) +  \sum_{j=1}^k \tja(\xa)^{-1}\Bja(\xa) \Big) \\
 \end{aligned}
\een
and
\ben \label{proplin822}
\begin{aligned}
& \int_{B_g(\xia, \frac{3 \ria}{2 R_0})} d_g(\xa, \cdot)^{1-n}\Big( \sum_{i=0}^k \Bia \Big)^{2^*-2}|\vpa| dv_g \\
& \lesssim   \Big( \tau_\alpha + \ea  +  \sa^{\ell,2}( \gamma_\alpha + a_\alpha)  + \mia^{\pui}  \Vert \vpa \Vert_{L^\infty(\Oia)}\Big) \times \\
& \quad \quad \Big(B_{0,\alpha}(\xa) +  \sum_{j=1}^k \tja(\xa)^{-1}\Bja(\xa) \Big)
\end{aligned}
\een
hold.

 \medskip

 \textbf{Step $3$: almost-orthogonality.} By construction $\vpa \in K_\alpha^{\perp}$ and so, for any $1 \le m \le n_i$,
 \ben \label{proplin91}
 \int_M \langle \nabla \vpa, \nabla Z_{i,m,\alpha} \rangle + \vpa Z_{i,m,\alpha} dv_g = 0.
 \een
  Straightforward computations using \eqref{defaa} yield 
 \ben \label{proplin10}
 \left| \int_M  \vpa Z_{i,m,\alpha} dv_g = 0 \right| \lesssim \sa a_\alpha
 \een 
 for some sequence $(\sa)_\alpha$ satisfying \eqref{defsigmaa}. By definition of $Z_{i,m,\alpha}$ and by \eqref{estZ} direct computations give, with \eqref{defaa} and \eqref{gradij} below
 \ben \label{proplin11}
 \int_{M \backslash B_g(\xia, \frac{\ria}{R_0})} | \langle \nabla \vpa, \nabla Z_{i,m,\alpha} \rangle| dv_g \lesssim \sa a_\alpha.
 \een
 Let now $j \in \mathcal{B}_i$. By  \eqref{calcul2} and \eqref{defaa} we have, for any $y\in  B_g(\xja, \rjia)\cap B_g(\xia, \frac{3\ria}{2R_0})$, 
\[ |\nabla \vpa(y) | \lesssim \sum_{j \in \mathcal{B}_i} \tja(y)^{-1}\Bja(y). \]
Independently, by \eqref{estZ} one always has $|\nabla Z_{i,m,\alpha}|\lesssim \mia^{1- \frac{n}{2}}$ in $M$. Therefore
 \ben \label{proplin12}
 \int_{ \Big(\bigcup_{j \in \mathcal{B}_i}  B_g(\xja, \rjia) \Big)\cap B_g(\xia, \frac{3\ria}{2R_0})} 
 | \langle \nabla \vpa, \nabla Z_{i,m,\alpha} \rangle| dv_g \lesssim \sa a_\alpha.
 \een
Define now 
 \ben \label{defhOia}
  \hat{\Omega}_{i,\alpha} = \frac{1}{\mia} \exp_{\xia}^{-1}(\Oia) 
  \een
where $\Oia$ is as in \eqref{defOia} and, for $y \in \hat{\Omega}_{i,\alpha}$,
  \ben \label{deftvpa}
  \hvpa(y) = \vpa \big( \exp_{\xia}(\mia y) \big),
  \een
 where $\exp_{\xia}$ is the exponential map for $g$ centered at $\xia$. For $y \in \hat{\Omega}_{i,\alpha}$ one has 
 \[ |\exp_{\xia}^*g(\mia \cdot) - \xi |(y) \lesssim \mia^2 |y|^2 \]
 where $\xi$ is the euclidean metric. Direct computations with \eqref{defaa} show that 
\[ \begin{aligned}
 & \int_{\Omega_{i,\alpha} } \langle \nabla \vpa, \nabla Z_{i,m,\alpha} \rangle dv_g  =\mia^{\pui} \int_{ \hat{\Omega}_{i,\alpha}}  \langle \nabla \hvpa, \nabla Z_{i,m} \rangle_{\xi} dx + O(\sa a_\alpha).
  \end{aligned}
  \]
Combining the latter with \eqref{proplin91}, \eqref{proplin10}, \eqref{proplin11} and \eqref{proplin12} finally shows that there exists a sequence $(\sa^{\ell,3})_\alpha$ satisfying \eqref{defsigmaa} such that for all $\alpha$,
   \ben \label{proplin13}
\left| \int_{ \hat{\Omega}_{i,\alpha}}  \langle \nabla \hvpa, \nabla Z_{i,m} \rangle_{\xi} dx \right| \lesssim  \mia^{1 - \frac{n}{2}} \sa^{\ell,3} a_\alpha
 \een
  for all $1 \le m \le n_i$.

 \medskip
 
 \textbf{Step $4$: equation satisfied by $\hvpa$.}  Let $j \in \mathcal{B}_i$ and  let  
  \[ \hat{x}_{j,\alpha} = \frac{1}{\mia} \exp_{\xia}^{-1}(\xja).  \]
 If $j \in \mathcal{C}_i$, where $\mathcal{C}_i$ is as in \eqref{defCi}, we have $d_g(\xja, \xia) = O(\mia)$: hence $|\hat{x}_{i,\alpha}|\lesssim1$ and $\rjia =o(\mia)$ as $\alpha \to + \infty$ by \eqref{defrhoia}. We let in this case $\hat{x}_{j,\infty} = \lim_{\alpha \to + \infty} \hat{x}_{j,\alpha}$. Otherwise, again by \eqref{defrhoia}, we have $|\hat{x}_{j,\alpha}| \to + \infty$ and $\rjia = o( d_g(\xia, \xja))$. As $\alpha \to + \infty$, and up to passing to a  subsequence, the sets $\hat{\Omega}_{i,\alpha}$ thus converge to $\R^n \backslash \{(\hat{x}_{j,\infty})_{j\in \mathcal{C}_i}\}$. We prove the following lemma:
  
  \begin{lemme} \label{lemmeeq}
There exists a sequence $(\sa^{\ell,4})_\alpha$ satisfying \eqref{defsigmaa} such that 
 \ben \label{proplin14}
 \big | \big|  \triangle_{g_{i,\alpha}} \hvpa \big| \big|_{L^\infty(\hat{\Omega}_{i,\alpha})} \lesssim \big( \tau_\alpha + \ea  +  \sa^{\ell,4}( \gamma_\alpha + a_\alpha) \big)\mia^{1 - \frac{n}{2}} + \Vert \vpa \Vert_{L^\infty(\Oia)}
 \een
holds, where $\Oia$ and $\hat{\Omega}_{i,\alpha}$ are as in \eqref{defOia} and \eqref{defhOia}.
  \end{lemme}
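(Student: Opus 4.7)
\textbf{Proof plan for Lemma \ref{lemmeeq}.} The plan is to rewrite \eqref{eqvpa} at the rescaled scale $\mia$ around $\xia$ and bound, term by term, the source that appears. First I would record the elementary scaling identity: letting $g_{i,\alpha}(y) = \exp_{\xia}^* g(\mia y)$ denote the rescaled pullback metric on $\hat{\Omega}_{i,\alpha}$, we have
\[
\triangle_{g_{i,\alpha}} \hvpa(y) = \mia^2 \, (\triangle_g \vpa)\bigl(\exp_{\xia}(\mia y)\bigr)
\qquad \text{for all } y \in \hat{\Omega}_{i,\alpha}.
\]
Using \eqref{eqvpa} this splits the task into controlling four contributions in $L^\infty(\hat{\Omega}_{i,\alpha})$: (i) $\mia^2 R_\alpha$, (ii) $\mia^2 f'(\mWa) \vpa$, (iii) $\mia^2 h_\alpha \vpa$, and (iv) $\mia^2 \sum \lija (\triangle_g + 1) Z_{i,j,\alpha}$, each evaluated at $\exp_{\xia}(\mia y)$.

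The easy terms are (ii) and (iii). On $\Oia$ estimate \eqref{calcul4} gives $|\mWa| \lesssim \Bia$, and at $x = \exp_{\xia}(\mia y)$ we have $\Bia(x)^{2^*-2} \lesssim \mia^{-2}(1+|y|^2)^{-2}$; hence $\mia^2 |f'(\mWa)\vpa| \lesssim \Vert \vpa \Vert_{L^\infty(\Oia)}$. Term (iii) is trivially $O(\mia^2 \Vert \vpa \Vert_{L^\infty(\Oia)})$ and is absorbed in the previous one. These two contributions together produce the final summand $\Vert \vpa \Vert_{L^\infty(\Oia)}$ in the claimed bound.

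For term (i) I would split $R_\alpha$ according to \eqref{propRa}. The main contribution comes from $\ea \Bia^{2^*-1}$, which at scale $\mia$ produces exactly $\ea \mia^{1-n/2}$ up to a bounded factor in $y$; the $\tau_\alpha B_{0,\alpha}$ piece contributes only $\tau_\alpha \mia^2$, absorbed into $\sa \tau_\alpha \mia^{1-n/2}$ by choosing $\sa = \mia^{n/2-1} \to 0$. For the $\gamma_\alpha$ terms, the diagonal $\gamma_\alpha \Bia$ gives $\gamma_\alpha \mia^{(6-n)/2} = o(\gamma_\alpha \mia^{1-n/2})$, while the cross terms $\gamma_\alpha \Bia^{2^*-2} \Bja$ for $j \ne i$ are handled by the three standard cases: for $j \in \mathcal{A}_i$ one invokes Lemma \ref{compbulles} to gain a factor $(\mia/\sjia)^{n-2} \to 0$; for $j \in \mathcal{B}_i$ one uses that $\Oia$ avoids $B_g(\xja, \rjia)$ by \eqref{defOia}, together with \eqref{defrhoia}; for $j = 0$ one only needs $B_{0,\alpha} \le 1$ and again $\mia^{n/2-1} \to 0$. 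Each such contribution thus fits in $\sa \gamma_\alpha \mia^{1-n/2}$.

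For the kernel term (iv), I would combine the $L^\infty$ bound \eqref{vp} on $\sum |\lija|$ with a case analysis of each $Z_{i',j,\alpha}$ on $\hat{\Omega}_{i,\alpha}$. The main contribution is the self-index $i' = i$: the rescaled version $\mia^{(n-2)/2} Z_{i,j,\alpha}(\exp_{\xia}(\mia \cdot))$ equals $Z_{i,j}$, which by \eqref{estZ} and $\triangle_\xi Z_{i,j} = (2^*-1)|V_i|^{2^*-2}Z_{i,j}$ is bounded together with its Laplacian, so $\mia^2 (\triangle_g + 1) Z_{i,j,\alpha} = O(\mia^{1-n/2})$. The indices $i' \ne i$ are negligible: for $i' = 0$ the functions $Z_{0,j}$ are smooth on $M$ and give an $O(\mia^2) = o(\mia^{1-n/2})$ contribution; for $i' \in \mathcal{A}_i$ the bubble-tree estimates of subsection \ref{radius} (analogues of Lemma \ref{compbulles} applied to $Z_{i',j,\alpha}$, which enjoys decay \eqref{estZ}) make them small; for $i' \in \mathcal{B}_i$ the removal of $B_g(x_{i',\alpha}, \rho_{i',i,\alpha})$ from $\Oia$ places us outside the concentration region of $Z_{i',j,\alpha}$.

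The main obstacle is the careful bookkeeping in (iv), where one has to distinguish the three families of kernel elements and exploit the geometry of $\Oia$ from \eqref{defOia} together with the bubble-tree comparison estimates. Collecting the four contributions yields exactly the bound stated in the lemma, with the small constant $\sa^{\ell,4}$ arising from the accumulation of the factors $\mia^{n/2-1}$, $(\mia/\sjia)^{n-2}$ and their analogues at the lower-bubble scale.
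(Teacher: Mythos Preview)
Your proposal is correct and follows the paper's approach: rescale via $\triangle_{g_{i,\alpha}}\hvpa=\mia^2(\triangle_g\vpa)\circ\exp_{\xia}(\mia\cdot)$ and bound the four contributions from \eqref{eqvpa} term by term. The one notable difference is in your term (iv): instead of a case-by-case analysis on the index $i'$, the paper uses the uniform pointwise bound $|(\triangle_g+1)Z_{j,m,\alpha}|\lesssim \Bja^{2^*-1}+\Bja$ for $j\ge1$ (and $\lesssim1$ for $j=0$), then combines \eqref{proplin2} with the bubble comparison \eqref{proplin141} to conclude in two lines. Thus (iv) is not the main obstacle; in the paper the laborious part is the cross-term analysis in (i), where one must also treat the case $\mja=o(\mia)$ with $d_g(\xia,\xja)>\frac{2\ria}{R_0}$ (omitted from your sketch) and, for $j\in\mathcal{B}_i$, further split according to whether $|y-\hat x_{j,\alpha}|$ stays bounded or not.
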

In the lemma we have let $g_{i,\alpha} = \exp_{\xia}^*g(\mia \cdot)$, so that $g_{i,\alpha} \to \xi$ in $C^2_{loc}(\R^n)$.
 
 \begin{proof}[Proof of Lemma \ref{lemmeeq}]
By definition of $\hvpa$ one has, for all $y \in \hat{\Omega}_{i,\alpha}$,
\[ \triangle_{g_{i,\alpha}} \hvpa = \mia^2 \triangle_g \vpa \big( \exp_{\xia}(\mia y) \big). \]
The Laplacian of $\vpa$ is estimated with \eqref{eqvpa}. First, by \eqref{defOia} and by \eqref{calcul} we have
\ben \label{proplin141}
 \Bja(\exp_{\xia}(\mia y)) \lesssim \mia^{1-\frac{n}{2}} (1+|y|)^{2-n}
 \een
for all $j \neq i$, $j \ge 1$, and  for all $y \in \hat{\Omega}_{i,\alpha}$, so that by \eqref{calcul5} and \eqref{defaa} we have
\ben \label{proplin1411}
\big| \mia^2 h_\alpha \big( \exp_{\xia}(\mia y) \big) \hvpa(y) \big| \lesssim \sa \mia^{1 - \frac{n}{2}}
\een
for some sequence $(\sa)_\alpha$ satisfying \eqref{defsigmaa} and for all $y \in \hat{\Omega}_{i,\alpha}$. If $j \ge 1$, straightforward computations show that for all $1 \le m \le n_j$
\[ |(\triangle_g +1)Z_{j,m,\alpha}| \lesssim \Bja^{2^*-1} + \Bja \]
holds, while if $j=0$ we have, for all $1 \le m \le n_0$, $|(\triangle_g +1)Z_{0,m,\alpha}| \lesssim 1$. As a consequence, Lemma \ref{compbulles} and \eqref{proplin2} show that:
 \ben \label{proplin15}
 \begin{aligned} 
 \Big| \mia^2  \sum \limits_{\underset{m =1 .. n_i }{j=0..k}} \lambda_{j,m}^{\alpha} &( \triangle_g   + 1)Z_{j,m, \alpha}  \big( \exp_{\xia}(\mia y)\big) \Big| \\
 & \lesssim \big( \tau_\alpha + \ea  +  \sa \gamma_\alpha \big) \mia^{1 - \frac{n}{2}}
 \end{aligned}
 \een
 for some sequence $(\sa)_\alpha$ as in \eqref{defsigmaa}  and for all $y \in \hat{\Omega}_{i,\alpha}$. Using \eqref{propRa}, Lemma \ref{compbulles}, \eqref{calcul} and \eqref{proplin141} we get that 
 \[ \begin{aligned}
  \mia^2 \big| R_\alpha &\big( \exp_{\xia}(\mia y)\big) \big|  \lesssim \big(\tau_\alpha + \ea  +  \sa \gamma_\alpha \big)  \mia^{1 - \frac{n}{2}}  \\
  &+ \gamma_\alpha \mia^2  \sum \limits_{\underset{j \neq i}{j=0..k}} \big( B_{j, \alpha}^{2^*-2} \Bia + \Bia^{2^*-2} \Bja \Big)\big( \exp_{\xia}(\mia y)\big)
  \end{aligned}\]
holds for all $y \in \hat{\Omega}_{i,\alpha}$. Let now $j \in \{1, \dots, k\}$ with $j \neq i$. Assume first that $\mia = O(\mja)$, hence $j \in \mathcal{A}_i$. Lemma \ref{compbulles} then shows that 
\[\begin{aligned}
 \mia^2 \big( &B_{j, \alpha}^{2^*-2} \Bia + \Bia^{2^*-2} \Bja \Big)\big( \exp_{\xia}(\mia y)\big) \\
 & \lesssim \left(\frac{\mia}{\sija}\right)^4 \mia^{1 - \frac{n}{2}} (1+|y|)^{2-n} +  \frac{\mia^\pui}{\sija^{n-2}} (1+|y|)^{-4} \\
 & \lesssim \sa \mia^{1 - \frac{n}{2}}
\end{aligned} \]
for all $y \in \hat{\Omega}_{i,\alpha}$, where we used that $\frac{\mia}{\sija} \to 0$ as $\alpha \to + \infty$. Assume now that $\mja = o(\mia)$. Then we either have $d_g(\xia, \xja) > \frac{2 \ria}{R_0}$ or $j \in \mathcal{B}_i$. In the first case, $\Bja$ satisfies
\[ \Bja(x) \lesssim \frac{\mja^{\pui}}{\ria^{n-2}} \quad \textrm{ on } \Oia, \]
so that 
\[ \begin{aligned}
 \mia^2 \big( &B_{j, \alpha}^{2^*-2} \Bia + \Bia^{2^*-2} \Bja \Big)\big( \exp_{\xia}(\mia y)\big) \\
 & \lesssim \left( \frac{\mja}{\mia} \right)^{2} \left(\frac{\mia}{\ria}\right)^4 \mia^{1 - \frac{n}{2}} (1+|y|)^{2-n} +  \left( \frac{\mja}{\mia} \right)^{\pui} \frac{\mia^\pui}{\ria^{n-2}} (1+|y|)^{-4} \\
 & \lesssim \sa \mia^{1 - \frac{n}{2}}.
\end{aligned} \]
Assume finally that $j \in \mathcal{B}_i$. If $y \in \hat{\Omega}_{i,\alpha} \cap \{y \in \hat{\Omega}_{i,\alpha}, |y-\hat{x}_{j,\alpha}| > \frac{1}{4} \}$ then
\[ \Bja\big( \exp_{\xia}(\mia y) \big) \lesssim  \left( \frac{\mja}{\mia} \right)^{\pui} \mia^{1- \frac{n}{2}}, \]
so that 
\[ \begin{aligned}
 \mia^2 \big( &B_{j, \alpha}^{2^*-2} \Bia + \Bia^{2^*-2} \Bja \Big)\big( \exp_{\xia}(\mia y)\big) \\
 & \lesssim \left( \frac{\mja}{\mia} \right)^{2} \mia^{1 - \frac{n}{2}} (1+|y|)^{2-n} + \left( \frac{\mja}{\mia} \right)^{\pui} \mia^{1 - \frac{n}{2}} (1+|y|)^{-4} \\
 & \lesssim \sa \mia^{1 - \frac{n}{2}}.
\end{aligned} \]
If $y \in B(\hat{x}_{j,\alpha}, \frac14) \backslash B(\hat{x}_{j,\alpha}, \frac12 \frac{\rjia}{\mia})$ we then have 
\[\Bia\big( \exp_{\xia}(\mia y)\big) \gtrsim \frac{\mia^\pui}{(\mia + d_g(\xia, \xja))^{n-2}}  \]
and thus by \eqref{defrhoia}
\[ \frac{\Bja}{\Bia}\big( \exp_{\xia}(\mia y)\big) \lesssim \left( \frac{\mja}{\mia} \right)^{\frac{1}{n-1}}. \]
As a consequence
\[ \begin{aligned}
 \mia^2 \big( &B_{j, \alpha}^{2^*-2} \Bia + \Bia^{2^*-2} \Bja \Big)\big( \exp_{\xia}(\mia y)\big)\\
 & \lesssim  \left( \left( \frac{\mja}{\mia} \right)^{\frac{1}{n-1}} +  \left( \frac{\mja}{\mia} \right)^{\frac{4}{(n-2)(n-1)}} \right) \mia^2 \Bia^{2^*-1}  \big( \exp_{\xia}(\mia y)\big)\\
 & \lesssim \sa \mia^{1 - \frac{n}{2}}.
\end{aligned} \]
The previous arguments therefore show that 
 \ben \label{proplin17}
  \begin{aligned}
  \mia^2 \big| \ga R_\alpha &\big( \exp_{\xia}(\mia y)\big) \big|  \lesssim  \big( \tau_\alpha + \ea  +  \sa \gamma_\alpha  \big) \mia^{1 - \frac{n}{2}} 
  \end{aligned}
  \een
for some sequence $(\sa)_\alpha$ satisfying \eqref{defsigmaa}. Finally by \eqref{calcul4} and \eqref{proplin141} one has 
\ben \label{proplin18}
\bal \mia^2 \big|\mathcal{W}_\alpha\big( \exp_{\xia}(\mia y)\big) \big|^{2^*-2} |\vpa\big( \exp_{\xia}(\mia y)\big)| \\
\lesssim  (1+|y|)^{-4} \Vert \vpa \Vert_{L^\infty(\Oia)}
 \eal 
 \een
 for any $y \in \hat{\Omega}_{i,\alpha}$.  Combining \eqref{proplin1411}, \eqref{proplin15}, \eqref{proplin17} and \eqref{proplin18} in \eqref{eqvpa} yields \eqref{proplin14} and concludes the proof of the lemma.
 \end{proof}
 
 \medskip
 
 \textbf{Step $5$: control of $ \Vert \vpa \Vert_{L^\infty(\Oia)}$ and end of the proof of Proposition \ref{recurrence}.} 
 
The following result provides a control on $ \Vert \vpa \Vert_{L^\infty(\Oia)}$:
\begin{lemme} \label{lemmeestnorme}
There exists a sequence $(\sa^{\ell,5})_\alpha$ satisfying \eqref{defsigmaa} such that
\ben \label{proplin19}  \Vert \vpa \Vert_{L^\infty(\Oia)} \lesssim  \big( \tau_\alpha + \ea  +  \sa^{\ell,5}( \gamma_\alpha + a_\alpha)  \big)\mia^{1 - \frac{n}{2}}
\een
for all $\alpha \ge 1$.
\end{lemme}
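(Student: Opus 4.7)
The plan is to argue by contradiction: suppose the estimate fails, so that along a subsequence
\[ \|\vpa\|_{L^\infty(\Oia)} \cdot \mia^{\frac{n-2}{2}} \big/ \big(\tau_\alpha + \ea + \sa(\ga + a_\alpha)\big) \to +\infty \]
for every sequence $(\sa)_\alpha$ satisfying \eqref{defsigmaa}. Define the rescaled function
\[ \hat{\psi}_\alpha(y) = \frac{\vpa\big(\exp_{\xia}(\mia y)\big)}{\|\vpa\|_{L^\infty(\Oia)}}, \qquad y \in \hat{\Omega}_{i,\alpha}, \]
so that $\|\hat{\psi}_\alpha\|_{L^\infty(\hat{\Omega}_{i,\alpha})} = 1$. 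Dividing equation \eqref{eqvpa}, pulled back to $\hat{\Omega}_{i,\alpha}$ by $\exp_{\xia}(\mia \cdot)$, by $\|\vpa\|_{L^\infty(\Oia)}$ and invoking the bounds \eqref{proplin1411}, \eqref{proplin15} and \eqref{proplin17} established in the proof of Lemma \ref{lemmeeq}, the contradiction hypothesis forces the entire right-hand side of the resulting equation to tend to zero in $L^\infty(\hat{\Omega}_{i,\alpha})$. Since $g_{i,\alpha} \to \xi$ in $C^2_{loc}(\R^n)$ and $\mia^2 f'\big(\mWa(\exp_{\xia}(\mia y))\big) \to (2^*-1)|V_i(y)|^{2^*-2}$ locally uniformly away from $\{\hat{x}_{j,\infty} : j \in \mathcal{C}_i\}$, standard interior elliptic theory yields, up to a subsequence, a limit $\hat{\psi}_\infty \in C^1_{loc}(\R^n \setminus \{\hat{x}_{j,\infty} : j \in \mathcal{C}_i\})$ satisfying the linearised Yamabe equation $\triangle_\xi \hat{\psi}_\infty = (2^*-1)|V_i|^{2^*-2}\hat{\psi}_\infty$.

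The next step is to turn the refined pointwise bound \eqref{proplin8} into a uniform decay estimate for $\hat{\psi}_\alpha$. Pulling \eqref{proplin8} back to the rescaled variables and dividing by $\|\vpa\|_{L^\infty(\Oia)}$, the contradiction hypothesis absorbs all the terms proportional to $\tau_\alpha + \ea + \sa(\ga + a_\alpha)$ and yields, uniformly on $\hat{\Omega}_{i,\alpha}$, an inequality of the form
\[ |\hat{\psi}_\alpha(y)| \lesssim (1+|y|)^{2-n} + \sum_{j \in \mathcal{B}_i}\Big(\frac{\mja}{\mia}\Big)^{\!\frac{n-2}{2}} \big(1+|y-\hat{x}_{j,\alpha}|\big)^{2-n} + o(1). \]
In particular $\hat{\psi}_\infty$ decays like $|y|^{2-n}$ at infinity and has only locally $L^{2^*}$ singularities at the points $\hat{x}_{j,\infty}$, $j \in \mathcal{C}_i$. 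The Kelvin-transform/removable-singularities argument used in the proof of \eqref{asymptoV} then upgrades $\hat{\psi}_\infty$ into a genuine $D^{1,2}(\R^n)$ solution of the linearised Yamabe equation at $V_i$, so that $\hat{\psi}_\infty \in K_{V_i}$. On the other hand, dividing the almost-orthogonality relation \eqref{proplin13} by $\|\vpa\|_{L^\infty(\Oia)}$ and passing to the limit yields $\hat{\psi}_\infty \perp K_{V_i}$ in $D^{1,2}(\R^n)$, and hence $\hat{\psi}_\infty \equiv 0$.

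To close the argument, I would combine this with the normalisation $\|\hat{\psi}_\alpha\|_{L^\infty(\hat{\Omega}_{i,\alpha})} = 1$. The decay estimate above, together with $\rjia/\mia \to 0$ for $j \in \mathcal{C}_i$ from \eqref{defrhoia}, ensures that $|\hat{\psi}_\alpha|$ tends to zero uniformly both as $|y| \to +\infty$ and as $y$ approaches any $\hat{x}_{j,\infty}$ within $\hat{\Omega}_{i,\alpha}$. Any maximising sequence $(y_\alpha)_\alpha$ for $|\hat{\psi}_\alpha|$ must therefore stay, up to a subsequence, in a fixed compact subset of $\R^n \setminus \{\hat{x}_{j,\infty} : j \in \mathcal{C}_i\}$, and passing to the limit $y_\alpha \to y_\infty$ gives $|\hat{\psi}_\infty(y_\infty)| = 1$, contradicting $\hat{\psi}_\infty \equiv 0$. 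The main technical obstacle lies in the second paragraph: one needs the choice of $\rjia$ from \eqref{defrhoia} to be sharp enough to ensure both that the higher-bubble contributions produce only locally $L^{2^*}$ singularities in the limit $\hat{\psi}_\infty$ and that they remain uniformly small on the shrinking annuli surrounding each $\hat{x}_{j,\infty}$, so that the maximum-point argument in the third paragraph genuinely closes.
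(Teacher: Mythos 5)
Your overall strategy (rescaling by $\Vert \vpa \Vert_{L^\infty(\Oia)}$, extracting a limit of the rescaled functions solving the linearised equation at $V_i$, using the almost-orthogonality \eqref{proplin13} to force $\psi_\infty \equiv 0$, and contradicting this with the normalisation) is the same as the paper's, but your final non-vanishing step has a genuine gap. Your own pointwise bound on $\hat{\psi}_\alpha$ only yields $|\hat{\psi}_\alpha(y)| \lesssim (1+|y|)^{2-n} + o(1)$ near the points $\hat{x}_{j,\infty}$, $j \in \mathcal{C}_i$; since these are finite points, this is merely an $O(1)$ bound there, not smallness. The claim that $|\hat{\psi}_\alpha|$ tends to zero uniformly as $y$ approaches $\hat{x}_{j,\infty}$ inside $\hat{\Omega}_{i,\alpha}$ is therefore unjustified (and in general false), so you cannot conclude that a maximising sequence stays in a fixed compact subset of $\R^n \setminus \{\hat{x}_{j,\infty}\}$. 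The scenario $y_\alpha \to \hat{x}_{j,\infty}$ with $|\hat{\psi}_\alpha(y_\alpha)| = 1$ is exactly the delicate case: the $C^{1}_{loc}$ convergence holds only away from the punctures and the excised balls $B(\hat{x}_{j,\alpha}, \rjia/\mia)$ shrink, so the unit value could a priori concentrate near a puncture and be lost in the limit, leaving no contradiction with $\psi_\infty \equiv 0$.

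What is missing is a uniform gradient estimate for $\hat{\psi}_\alpha$ up to the excised balls, namely the analogue of \eqref{proplin221}, $|\nabla \hat{\psi}_\alpha(y)| \lesssim (1+|y|)^{1-n} + o(1)$ on $\hat{\Omega}_{i,\alpha}$, which the paper derives from the representation-formula gradient bound \eqref{proplin821} together with the estimate on $\sum_j \Xi_{j,\alpha}\Bja$. This is precisely where the choice of $\rjia$ in \eqref{defrhoia}, larger than $\sjia$, is crucial: by \eqref{calcul2} it guarantees $\tja^{-1}\Bja \lesssim \tia^{-1}\Bia$ on $\Oia$, hence uniform $C^1$ control of $\hat{\psi}_\alpha$ near the punctures (this is the content of \eqref{proplin23}). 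With it, if $y_\alpha \to \hat{x}_{j,\infty}$ one moves a fixed small distance away to points $w_\alpha \to w \neq \hat{x}_{j,\infty}$ with $\liminf_{\alpha} |\hat{\psi}_\alpha(w_\alpha)| > 0$, so $\psi_\infty \not\equiv 0$ and the contradiction closes. The same gradient bound (or an integration-by-parts substitute that you do not carry out) is also what legitimises passing \eqref{proplin13} to the limit over the growing domains $\hat{\Omega}_{i,\alpha}$; asserting that limit without it leaves a second, smaller gap. Note finally that you locate the ``main technical obstacle'' in the quality of the singularities for removability, but boundedness near the punctures already suffices to remove them; the real obstacle is the missing uniform $C^1$ control just described.
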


\begin{proof}[Proof of Lemma \ref{lemmeestnorme}]
Let 
\ben \label{proplin191}
 \sa^{\ell,5} = \sa^{\ell,1} + \sa^{\ell,2} + \sa^{\ell,3} + \sa^{\ell,4} + \mu_{1, \alpha} + \sum_{i = 1}^k \left( \frac{\mia}{\ria}\right)^{n-2} + \sum \limits_{\underset{\mia = o(\mja) }{i,j \in \{1, \dots, k\}}} \left( \frac{\mia}{\mja} \right)^{\pui},
 \een
where the sequences $(\sa^{\ell,1})_\alpha, (\sa^{\ell,2})_\alpha, (\sa^{\ell,3})_\alpha$ and $(\sa^{\ell,4})_\alpha$ appear respectively in \eqref{proplin8}, \eqref{proplin821}, \eqref{proplin13} and \eqref{proplin14}. The sequence $(\sa^{\ell,5})$ obviously still satisfies \eqref{defsigmaa}. We claim that \eqref{proplin19} holds true for this choice of $\sa^{\ell,5}$. We proceed by contradiction and assume that, up to a subsequence,
\ben \label{proplin20}
 \Vert \vpa \Vert_{L^\infty(\Oia)} >>  \big( \tau_\alpha + \ea  +  \sa^{\ell,5}( \gamma_\alpha + a_\alpha)  \big)\mia^{1 - \frac{n}{2}}
\een 
where $\sa^{\ell,5}$ is given by \eqref{proplin191}. Our goal will be to pass \eqref{proplin14} to the limit and to get a contradiction with the assumption $\vpa \in K_\alpha^{\perp}$. 

We first claim that 
\ben \label{proplin21}
\sum_{j=1}^k (\Tja \Bja)\big( \exp_{\xia}(\mia y)\big) \lesssim \sa^{\ell,5} \mia^{1 - \frac{n}{2}} 
\een
for all $y \in \hat{\Omega}_{i,\alpha}$. Let indeed $y \in \hat{\Omega}_{i,\alpha}$. We first have, by \eqref{defTi} and and \eqref{proplin191},
\[ (\Tia \Bia) \big( \exp_{\xia}(\mia y)\big) \lesssim \mia^{2 - \frac{n}{2}} (1+|y|)^{3-n} \lesssim \sa^{\ell,5} \mia^{1- \frac{n}{2}}, \]
where we used that $\mia \le \mu_{1,\alpha}$ for all $i \ge 1$. Let now $j \neq i$. Assume first that $\mia = O(\mja)$, so that $j \in \mathcal{A}_i$. By Lemma \ref{compbulles} we have $\Bja\big( \exp_{\xia}(\mia y)\big)\lesssim \mia^{\pui}{\ria^{n-2}}$ for all $y\in \hat{\Omega}_{i,\alpha}$. Since $\Tja \lesssim 1$ this gives 
\[ (\Tja \Bja)\big( \exp_{\xia}(\mia y)\big) \lesssim \left( \frac{\mia}{\ria}\right)^{n-2} \mia^{1 - \frac{n}{2}} \lesssim \sa^{\ell,5} \mia^{1- \frac{n}{2}}.\]
Assume now that $\mja = o(\mia)$. As before, this either means that $d_g(\xia, \xja) > \frac{2 \ria}{R_0}$ or that $j \in \mathcal{B}_i$. In the first case, since $\Tja \lesssim 1$, we obtain that
\[(\Tja \Bja)\big( \exp_{\xia}(\mia y)\big) \lesssim    \left( \frac{\mja}{\mia} \right)^{\pui} \left(\frac{\mia}{\ria} \right)^{n-2} \mia^{1 - \frac{n}{2}}  \lesssim \sa^{\ell,5} \mia^{1- \frac{n}{2}}.\]
Assume finally that $j \in \mathcal{B}_i$. If $y \in \hat{\Omega}_{i,\alpha} \cap B(\hat{x}_{j,\alpha} , 1)$ then $d_g \big( \exp_{\xia}(\mia y), \xja\big) \lesssim \mia$. Hence, by \eqref{calcul},
\[  (\Tja \Bja)\big( \exp_{\xia}(\mia y)\big) \lesssim \mia^{2 - \frac{n}{2}}  \lesssim \sa^{\ell,5} \mia^{1- \frac{n}{2}}. \]
 If instead $y \in \hat{\Omega}_{i,\alpha} \cap \{ y \in \hat{\Omega}_{i,\alpha}, |y - {x}_{j,\alpha}| > 1 \}$ then $d_g \big( \exp_{\xia}(\mia y), \xja\big) \gtrsim \mia$ and hence, since $\Tja \lesssim 1$,
\[  (\Tja \Bja)\big( \exp_{\xia}(\mia y)\big) \lesssim  \left( \frac{\mja}{\mia} \right)^{\pui} \mia^{1 - \frac{n}{2}}  \lesssim \sa^{\ell,5} \mia^{1- \frac{n}{2}}.\]
Gathering these estimates proves \eqref{proplin21}. Let now, for $y \in \hat{\Omega}_{i,\alpha}$ 
\[ \psia(y) = \frac{1}{ \Vert \vpa \Vert_{L^\infty(\Oia)}} \hvpa(y) \]
where $\hvpa$ was defined in \eqref{deftvpa}. With  \eqref{proplin8},   \eqref{proplin191}, \eqref{proplin20}  and \eqref{proplin21} we obtain that $\psia$ satisfies
\ben \label{proplin22}
|\psia(y)| \lesssim (1+|y|)^{2-n} + o(1) \quad \textrm{ for all } y \in \hat{\Omega}_{i,\alpha}. 
 \een
Independently, a straightforward adaptation of the arguments that led to \eqref{proplin21} shows that for all $y \in \hat{\Omega}_{i,\alpha}$,
\[ 
\sum_{j=1}^k (\Xi_{j,\alpha} \Bja)\big( \exp_{\xia}(\mia y)\big) \lesssim  \mia^{1 - \frac{n}{2}} \sa^{\ell,5}
\]
holds, where $\Xi_{j,\alpha}$ is as in \eqref{defXia}. Combining the latter with  \eqref{proplin821}, \eqref{proplin191} and \eqref{proplin20} we also obtain that
 \ben \label{proplin221}
 |\nabla \psia(y)| \lesssim (1+|y|)^{1-n} + o(1)
 \een
for all $y \in \hat{\Omega}_{i,\alpha}$. If $y \in  \hat{\Omega}_{i,\alpha}$, $\psia$ also satisfies by \eqref{proplin14}
 \[|\triangle_{g_{i,\alpha}}\psia(y)| \lesssim 1. \]
Since $\hat{\Omega}_{i,\alpha}$ converges towards $\R^n\backslash \{(\hat{x}_{i,\infty})_{\in \mathcal{C}_i} \}$, standard elliptic theory and \eqref{proplin22} show that $\psia$ converges in $C^{1,\beta}_{loc}(\R^n\backslash \{(\hat{x}_{i,\infty})_{\in \mathcal{C}_i}\})$, for some $0 < \beta < 1$, towards $\psi_{\infty}$, where $\mathcal{C}_i$ is as in \eqref{defCi}.  By passing \eqref{proplin22} and \eqref{proplin221} to the limit we see that $\psi_{\infty}$ satisfies
\ben \label{proplin23}
|\psi_\infty(y)| + (1+|y|)|\nabla \psi_\infty(y)| \lesssim (1+|y|)^{2-n} \quad \textrm{ for all } y \in \R^n\backslash \{(\hat{x}_{i,\infty})_{\in \mathcal{C}_i}\}. 
 \een
Standard arguments then show that $\psi_\infty \in D^{1,2}(\R^n)$. With \eqref{proplin20}, \eqref{proplin221} and the dominated convergence theorem, we can also pass \eqref{proplin13} to the limit to obtain
\[
\int_{\R^n} \langle \nabla \psi_\infty, \nabla Z_{i,m} \rangle dx = 0  
\]
for any $1 \le m \le n_i$. Hence, $\psi_\infty \in K_{V_i}^{\perp}$ where $K_{V_i}$ is as in \eqref{defKV}. 

\medskip

We will now show that $\psi_\infty \in K_{V_i}$ and $\psi_\infty \not \equiv 0$ which will yield the desired contradiction. Let $y \in  \R^n\backslash \{(\hat{x}_{i,\infty})_{\in \mathcal{C}_i}\}$ be fixed. We first claim that, if $j \neq i$, 
\[ \mia^2 \Bja \big( \exp_{\xia}( \mia y) \big)^{2^*-2} \to 0 \quad \textrm{ as } \alpha \to + \infty. \]
This easily follows from the arguments developed in the proof of \eqref{proplin21}. With  \eqref{defWa} we then have that 
\[\frac{1}{\Vert \vpa \Vert_{L^\infty(\Oia)}} \mia^2\Big( |\mathcal{W}_\alpha|^{2^*-2} \vpa \Big) \big( \exp_{\xia}(\mia y)\big) \to V_i(y)^{2^*-2} \psi_{\infty}(y) \]
for all $y \in \hat{\Omega}_{i,\alpha}$ as $\alpha \to + \infty$. Using \eqref{proplin1411}, \eqref{proplin15}, \eqref{proplin17} and \eqref{proplin20} we can then pass the equation satisfied by $\psa$ to the limit to see that $\psi_\infty$ satisfies 
\[\triangle_\xi \psi_{\infty} = (2^*-1) |V_i|^{2^*-2}\psi_{\infty} \quad \textrm{ in } \R^n\backslash \{(\hat{x}_{i,\infty})_{\in \mathcal{C}_i} \} .\]
By \eqref{proplin23} the singularities of $\psi_\infty$ are removable, and $\psi_\infty$ extends to a $D^{1,2}(\R^n) \cap C^{1, \beta}(\R^n)$ function in $\R^n$, still denoted by $\psi_{\infty}$, satisfying 
\[ \triangle_\xi \psi_{\infty} = (2^*-1) |V_i|^{2^*-2}\psi_{\infty} \quad \textrm{ in } \R^n.
\]
This proves that  $\psi_\infty \in K_{V_i}$ and hence that $\psi_\infty \equiv 0$. It remains to see that $\psi_\infty \not \equiv 0$. By definition $\Vert \psia \Vert_{L^\infty(\hat{\Omega}_{i,\alpha})} = 1$ holds. If we let, for all $\alpha$, $y_\alpha \in \hat{\Omega}_{i,\alpha}$ be such that $|\psia(y_\alpha)| = 1$, \eqref{proplin22} shows that $|y_{\alpha}| \lesssim 1$ and hence, up to a subsequence, that $y_\alpha \to y_{\infty} \in \R^n$. If $y_\infty \in \R^n \backslash \{(\hat{x}_{i,\infty})_{\in \mathcal{C}_i} \}$ the local $C^{1, \beta}$ convergence shows that $|\psi_\infty(y_\infty)| =1$, a contradiction. If instead $y_\alpha \to \hat{x}_{j,\infty}$ for some $j \in \mathcal{C}_i$ we can find, by \eqref{proplin221}, another sequence of points of $\hat{\Omega}_{i,\alpha}$, that we call $(w_\alpha)_{\alpha}$, such that $w_\alpha \to w \in  \R^n \backslash \{(\hat{x}_{i,\infty})_{\in \mathcal{C}_i} \}$ and $\liminf_{\alpha \to + \infty} |\psia(w_\alpha)| >0$. This proves again that $\psi_\infty \not \equiv 0$, contradicts \eqref{proplin20} and concludes the proof of Lemma \ref{lemmeestnorme}.
\end{proof}

The remark after \eqref{proplin6} explained how the ``neck regions'' 
\[ B_g(\xja, \rjia) \backslash \cup_{s \in \mathcal{B}_i} B_g(x_{s, \alpha}, \frac{r_{s,\alpha}}{R_0}) \quad j \in \mathcal{B}_i, \] 
offer a negligible contribution to the pointwise estimates on $\vpa$, hence did not need to be taken into account in the induction property \eqref{Hl1}. The choice of the radius $\rjia$ might seem odd at first -- a more natural choice in view of the pointwise relations between bubbles would have been to choose the radius $\sjia$ as in \eqref{defsij} -- but is crucial in order to ensure the $C^1$-boundedness of $\psi_\infty$ in the contradiction argument of Lemma \ref{lemmeestnorme} (see \eqref{proplin23}).

\begin{proof}[End of the proof of Proposition \ref{recurrence}]
We are now in position to conclude the proof of the induction argument. We plug \eqref{proplin19} into \eqref{proplin8}, \eqref{proplin9}, \eqref{proplin821} and \eqref{proplin822}. This shows that, for any sequence $(\xa)_\alpha  \in B_g(\xia, \frac{ \ria}{R_0})$:
\[  
\begin{aligned}
& |\vpa(\xa)|  \lesssim  a_\alpha \frac{\mia^{\pui}}{\ria^{n-2}} +\ga \sum_{j=1}^k \Tja(\xa) \Bja(\xa) \\
&+ \big(\tau_\alpha + \ea + \sa^{\ell,5} (\ga +  a_\alpha) \big)\Big( \Bia(\xa) +  \sum_{j \in \mathcal{B}_i } \Bja(\xa) \Big), \\
 \end{aligned}
\]
where $\Tja$ is as in \eqref{defTi}, that
\ben \label{proplin231}
\begin{aligned}
& \int_{B_g(\xia, \frac{3 \ria}{2 R_0})} d_g(\xa, \cdot)^{2-n}\Big( \sum_{i=0}^k \Bia \Big)^{2^*-2}|\vpa| dv_g \\
& \lesssim  \big(\tau_\alpha + \ea + \sa^{\ell,5} (\ga +  a_\alpha) \big)\Big( \Bia(\xa) +  \sum_{j \in \mathcal{B}_i } \Bja(\xa) \Big), \\
& \int_{B_g(\xia, \frac{3\ria}{2 R_0})} d_g(\xa, \cdot)^{1-n}\Big( \sum_{i=0}^k \Bia \Big)^{2^*-2}|\vpa| dv_g \\
& \lesssim  \big(\tau_\alpha + \ea + \sa^{\ell,5} (\ga +  a_\alpha) \big) \Big( B_{0, \alpha}(\xa) +  \sum_{j =1}^k  \tja(\xa)^{-1}  \Bja(\xa) \Big),
\end{aligned}
\een
where $\Xi_{j,\alpha}$ is as in \eqref{defXia} and that 
\ben \label{proplin24} 
\begin{aligned}
& |\nabla \vpa(\xa)|  \lesssim  a_\alpha \frac{\mia^{\pui}}{\ria^{n-1}} +\gamma_\alpha \sum_{j=1}^k \Xi_{j,\alpha}(\xa) \Bja(\xa) \\
&+ \big(\tau_\alpha + \ea + \sa^{\ell,5} (\ga +  a_\alpha) \big) \Big( B_{0, \alpha}(\xa) +  \sum_{j =1}^k  \tja(\xa)^{-1}  \Bja(\xa) \Big), \\
 \end{aligned}
\een
where the sequence $\sa^{\ell,5}$ is as in \eqref{proplin191}. The first inequality remains true for all $i \in E_\ell$, and this together with $(H_{\ell+1,1})$ (in case $\ell \le p-1$) proves \eqref{Hl1}. To conclude the proof of \eqref{Hl2} it remains to prove that \eqref{proplin231} remains true for for any sequence $(\xa)_\alpha$ of points of $M$. First, it is easily seen that \eqref{proplin4}, \eqref{proplin5} and \eqref{proplin6} remain true for any sequence $(\xa)_\alpha$ of points of $M$, and not only of $B_g(\xia, \frac{\ria}{R_0})$ (remark that \eqref{proplin5} follows from $(H_{\ell+1,2})$ in case $\ell \le p-1$). If $(\xa)_\alpha$ is any sequence of points in $M$ we now have, using \eqref{Hl1}, that 
\[ \bal
\int_{B_g(\xia, \frac{\ria}{R_0})} d_g(\xa, \cdot)^{2-n} \Big( \sum_{i=0}^k \Bia \Big)^{2^*-2} |\vpa| dv_g \\
\lesssim  \big(\tau_\alpha + \ea + \sa^{\ell,5} (\ga +  a_\alpha) \big) \Big(  \Bia(\xa)+  \sum_{j \in \mathcal{B}_i}  \Bja(\xa) \Big). 
\eal\]

The second integral in \eqref{proplin231} is computed in the same way and \eqref{proplin231} remains true for any sequence $(\xa)_\alpha$  in $M$. This proves \eqref{Hl2} and concludes the proof of Proposition \ref{recurrence}.
\end{proof}

\subsection{Proof of Theorem \ref{proplin} part $2$: end of the proof} \label{proof3}

We are now in position to end the proof of Theorem \ref{proplin}. Let $G_0$ be the Green's function of the operator $\triangle_g + h - f'(u_0)$ in $M$, where  $f$ is as in \eqref{deff}. $G_0$ is by definition orthogonal to $K_0$, which is defined in \eqref{defK0}, and there exists $C>0$ depending only on $n, (M,g), h$ and $u_0$ so that $|G_0(x,y)| \le C d_g(x,y)^{2-n}$ for any $x \neq y$ in $M$ (see Robert \cite{RobDirichlet}). By \eqref{eqvpa} $\vpa$ satisfies
\[ \begin{aligned}
\triangle_g \vpa + h \vpa - f'(u_0)  \vpa &= \big( f'(\mWa) - f'(u_0) \big) \vpa +  (h - h_\alpha )\vpa  \\
&+  R_\alpha +  \sum \limits_{\underset{j =1 .. n_i }{i=0..k}} \lija ( \triangle_g   + 1)Z_{i,j, \alpha}.
 \end{aligned}
 \]
Let $(\xa)_\alpha$ be a sequence of points in $M$. By definition $\vpa \in K_0^{\perp}$ and so a representation formula for $\triangle_g + h - f'(u_0)$  in $M$ shows, using \eqref{proplin2}, \eqref{defaa}, \eqref{proplin1} and \eqref{greenB} below, that there exists a sequence $(\sa)_\alpha$ satisfying \eqref{defsigmaa} such that 
\ben \label{proplin25}
\begin{aligned}
|\vpa(\xa)| & \lesssim \big( \tau_\alpha + \ea + \sa \gamma_\alpha \big)  \sum_{j=0}^k \Bja(\xa) \\
&  +\gamma_\alpha\sum_{j=1}^k \Tja(\xa) \Bja(\xa)  \\
& + \int_{M} d_g(\xa, \cdot)^{2-n} \big|  f'(\mWa) - f'(u_0) \big| |\vpa| dv_g.
\end{aligned}
\een
By \eqref{calcul3} we have 
\[ |f'(\mWa) - f'(u_0)| \lesssim \Big(\sum_{i=1}^k \Bia\Big)^{2^*-2} \quad \textrm{ in } \bigcup_{i=1}^k  B_g(\xia, \frac{\ria}{R_0}). \]
Proposition \ref{recurrence} (more precisely \eqref{Hl2}) then shows that 
\ben \label{proplin26}
\begin{aligned}
 \int_{ \bigcup_{i=1}^k  B_g(\xia, \frac{\ria}{R_0})} &d_g(\xa, \cdot)^{2-n} \big|  f'(\mWa) - f'(u_0) \big| |\vpa| dv_g \\
 & \lesssim \big( \tau_\alpha + \ea + \sa (\gamma_\alpha+a_\alpha) \big)   \sum_{j=0}^k \Bja(\xa).
\end{aligned}
\een
To estimate the integral over $M \backslash  \bigcup_{i=1}^k  B_g(\xia, \frac{\ria}{R_0})$ we distinguish two cases. Assume first that $B_{0,\alpha} \equiv 0$. According to \eqref{defBia} this corresponds to the case where $u_0 \equiv 0$ and $\triangle_g +h$ has no kernel. Then $f'(u_0) \equiv 0$ and on  $M \backslash  \bigcup_{i=1}^k  B_g(\xia, \frac{\ria}{R_0})$ we have 
\[|f'(\mWa)| \lesssim \sum_{i=1}^k \Bia^{2^*-2}. \]
Straightforward computations with \eqref{defaa} and \eqref{decbulle} below then show that 
\ben \label{proplin27}
\begin{aligned}
& \int_{ M \backslash \bigcup_{i=1}^k  B_g(\xia, \frac{\ria}{R_0})} d_g(\xa, \cdot)^{2-n} \big|  f'(\mWa) \big| |\vpa| dv_g \\
 & \lesssim  a_{\alpha} \sum_{j=1}^k \left( \frac{\mja}{\rja}\right)^2 \Bja(\xa) = \sa a_\alpha \sum_{j=1}^k \Bia(\xa)
\end{aligned}
\een
for some sequence $(\sa)_\alpha$ satisfying \eqref{defsigmaa}. Assume now that $B_{0,\alpha} \equiv 1$, which happens when $u_0 \not \equiv 0$ or when $u_0 \equiv 0$ and $\triangle_g +h$ has kernel. By \eqref{defri} and \eqref{defR0} $\ria \le \sqrt{\mia}$ holds true for all $1 \le i \le k$ and we can decompose $M \backslash  \bigcup_{i=1}^k  B_g(\xia, \frac{\ria}{R_0})$ as
\[ \begin{aligned}
 &M \backslash  \bigcup_{i=1}^k  B_g(\xia, \frac{\ria}{R_0}) \subseteq \\
 &  \Big( M \backslash  \bigcup_{i=1}^k  B_g(\xia, \sqrt{\mia}) \Big)\bigcup  \bigcup_{i=1}^k \Big( B_g(\xia, \sqrt{\mia}) \backslash \bigcup_{j=1}^k B_g(\xja, \frac{\rja}{R_0}) \Big) .
  \end{aligned}
 \]
If $i \in \{1, \dots,k\}$ we have, by \eqref{defBia} and for all $y \in  B_g(\xia, \sqrt{\mia}) \backslash \bigcup_{j=1}^k B_g(\xja, \frac{\rja}{R_0})$,
\[  \big|  f'(\mWa) - f'(u_0) \big|(y)\lesssim \sum_{i=1}^k \Bia(y)^{2^*-2}. \]
As a consequence and by \eqref{defaa} and \eqref{decbulle} below, for any $i \in \{1, \dots, k\}$ we have
 \ben \label{proplin28}
\begin{aligned}
&  \int \limits_{ B_g(\xia, \sqrt{\mia}) \backslash \bigcup_{j=1}^k B_g(\xja, \frac{\rja}{R_0})} d_g(\xa, \cdot)^{2-n} \big|  f'(\mWa) - f'(u_0) \big| |\vpa| dv_g \\
& \lesssim   \int \limits_{ B_g(\xia, \sqrt{\mia}) \backslash B_g(\xia, \frac{\ria}{R_0})} a_\alpha d_g(\xa, \cdot)^{2-n} \Bia^{2^*-1} dv_g \\ 
 & \lesssim  \sa a_{\alpha} \sum_{j=1}^k \Bja(\xa).
\end{aligned}
\een
Since $B_{0,\alpha} = 1$ we have 
\ben \label{proplin281}
 \Tia(y) \Bia(y) \lesssim \sa(\Bia(y) + B_{0,\alpha}(y) ) 
 \een
for all $y \in M$. This follows e.g. from \eqref{defTi} and the simple inequality
\[  \Tia \Bia \lesssim \sqrt{\mia} \Bia \mathds{1}_{\{ d_g(\xia, \cdot) \le \sqrt{\mia}\} } + \sqrt{\mia}\mathds{1}_{\{ d_g(\xia, \cdot) \ge \sqrt{\mia} \}}. \]
If $y \in M \backslash  \bigcup_{i=1}^k  B_g(\xia, \sqrt{\mia}) $ we then have 
\[  \big|  f'(\mWa) - f'(u_0) \big|(y) \lesssim \sum_{i=1}^k \Bia(y) + \sum_{i=1}^k \Bia(y)^{2^*-2} + \sa B_{0,\alpha} \]
for some sequence $(\sa)_\alpha$ satisfying \eqref{defsigmaa}, so that \eqref{proplin281}, \eqref{greenB} and \eqref{decbulle} below give
\ben \label{proplin29}
\begin{aligned}
 \int_{M \backslash  \bigcup_{i=1}^k  B_g(\xia, \sqrt{\mia})} &d_g(\xa, \cdot)^{2-n} \big|  f'(\mWa) - f'(u_0) \big| |\vpa| dv_g \\
 & \lesssim  \sa a_{\alpha} \sum_{j=0}^k \Bja(\xa).
\end{aligned}
\een
By plugging \eqref{proplin26}, \eqref{proplin27},  \eqref{proplin28} and \eqref{proplin29} in  \eqref{proplin25} we have thus proven that there exists a sequence $(\sa)_\alpha$ satisfying \eqref{defsigmaa} such that 
\ben \label{proplin30}
\begin{aligned}
|\vpa(\xa)| & \lesssim \big( \tau_\alpha + \ea + \sa (\gamma_\alpha+a_\alpha) \big)   \sum_{j=0}^k \Bja(\xa) \\
&  +\gamma_\alpha  \sum_{j=1}^k \Tja(\xa) \Bja(\xa) 
\end{aligned}
\een
for any sequence $(\xa)_\alpha$ in $M$.  Estimates on $|\nabla \vpa|$ are obtained in a similar way, just as we did in \eqref{proplin821}:  a straightforward adaptation of the arguments that led to \eqref{proplin26}, \eqref{proplin27},  \eqref{proplin28} and \eqref{proplin29} then shows that
\ben \label{proplin31}
\begin{aligned}
& |\nabla \vpa(\xa)| \\
&  \lesssim\big( \tau_\alpha + \ea + \sa (\gamma_\alpha+a_\alpha) \big)  \big( B_{0,\alpha} + \sum_{j=1}^k \tja(\xa)^{-1}\Bja(\xa)\big) \\
&  +\gamma_\alpha  \sum_{j=1}^k \Xi_{j,\alpha}(\xa) \Bja(\xa) 
\end{aligned}
\een
for the same sequence $(\sa)_\alpha$ as in \eqref{proplin30} and for any sequence $(\xa)_\alpha$ of points of $M$. Going back to the definition of $a_\alpha$ in \eqref{defaa} we let $(y_\alpha)_\alpha$ be a sequence of points of $M$ such that
\[ 
 \left| \frac{\vpa(y_\alpha)}{\sum_{i=0}^k \Bia(y_\alpha)}\right | + \left|  \frac{\nabla \vpa(y_\alpha)}{B_{0,\alpha}(y_\alpha) + \sum_{i=1}^k \tia(y_\alpha)^{-1} \Bia(y_\alpha)}\right| = a_\alpha.
\]
Since $\Tja \lesssim 1$ and $\Xi_{j,\alpha} \lesssim 1$ for all $1 \le j \le k$ we have $ \sum_{j=1}^k \Tja \Bja \lesssim \sum_{i=1}^k \Bia$ and $ \sum_{j=1}^k \Xi_{j,\alpha} \Bja\lesssim  \sum_{j=1}^k \tja^{-1}\Bja$ in $M$. Applying \eqref{proplin30} and \eqref{proplin31} at $\xa=y_\alpha$ then yields  
\ben \label{soussuite}
 a_\alpha \lesssim \tau_\alpha + \ea + \gamma_\alpha + \sa a_\alpha. 
 \een
Since $\sa \to 0$ by \eqref{defsigmaa}, \emph{up to passing to a subsequence for the sequence $(\sa)_\alpha$ only} (which amounts to passing to a common subsequence for $(\xia)_\alpha, (\mia)_\alpha$, $1 \le i \le k$, and $(h_\alpha)_\alpha$), the previous inequality self-improves and gives 
\[ a_\alpha \lesssim  \tau_\alpha + \ea + \gamma_\alpha. \]
Replugging the latter in \eqref{proplin30} finally gives 
\[
\begin{aligned}
|\vpa(\xa)| & \lesssim \big( \tau_\alpha + \ea + \sa \gamma_\alpha  \big) \sum_{j=0}^k \Bja(\xa) +\gamma_\alpha \sum_{j=1}^k \Tja(\xa) \Bja(\xa) 
\end{aligned}
\]
for any sequence $(\xa)_\alpha$ in $M$, and concludes the proof of  Theorem \ref{proplin}. \vspace{-15pt} \begin{flushright} $\square$ \end{flushright}

\begin{remark}
The proof of Theorem \ref{proplin} keeps track of all the sequences appearing in the estimates by distinguishing the error terms coming from \eqref{propRa} from those that originate from the interaction between the bubbles $\Via$ themselves. It is particularly important to single out, in the estimates, the sequences satisfying \eqref{defsigmaa}, i.e. that only depend on the choice of $(\xia)_\alpha, (\mia)_\alpha$, $1 \le i \le k$, and $(h_\alpha)_\alpha$. See for instance the final self-improving argument of \eqref{soussuite} that proves that, up to passing to a subsequence for the sequences $(\xia)_\alpha, (\mia)_\alpha$, $1 \le i \le k$, and $(h_\alpha)_\alpha$ only, estimate \eqref{estopt} holds true for any of the sequences $(\ta)_\alpha, (\ea)_\alpha$ and $(\ga)_\alpha$. This will be used again in the proof of Proposition \ref{nonlin2} below. 
\end{remark}

  \begin{remark}
Theorem \ref{proplin} remains true for a slighty more general choice of $K_\alpha$ in \eqref{defKa}. A careful inspection of the proof shows for instance that when $i \ge 1$ one can replace the $Z_{i,j,\alpha}$ explicitly given by \eqref{defZij} by another family $(Z_{i,j,\alpha})_\alpha$ of smooth functions in $M$, $1 \le j \le  n_i$, satisfying 
\[ \big|(\triangle_g +1)Z_{i,j,\alpha} \big| \lesssim \Bia^{2^*-1} + \Bia, \quad |Z_{i,j,\alpha}| \lesssim \Bia,  \quad |\nabla Z_{i,j,\alpha}| \lesssim \tia^{-1} \Bia \textrm{ and } \]
\[ \mia^{\pui} Z_{i,j,\alpha} \big( \exp_{\xia}(\mia x) \big) \to Z_{i,j}(x) \textrm{ in } C^0_{loc}(\R^n). \]
  \end{remark}

\section{Nonlinear procedure and proof of Theorem \ref{theorieC0}} \label{nonlinear}

In this section we prove Theorem \ref{theorieC0}. We first perform a nonlinear perturbative argument to show that \eqref{eqha} possesses, up to kernel elements, a canonical solution that looks like $\mWa$ at first-order and that comes with explicit pointwise bounds. This is the content of Proposition \ref{nonlin2}, whose proof heavily relies on Theorem \ref{proplin}. We then apply this construction to obtain \emph{a priori} bounds and prove a slightly more general result, Theorem \ref{theopsC0} below, from which Theorem \ref{theorieC0} follows.

\subsection{Nonlinear procedure}

As in Section \ref{linear} we let $(h_\alpha)_\alpha$ be a sequence of functions converging towards $h$ in $L^\infty(M)$ as $\alpha \to + \infty$, $k \ge 1$ be an integer, $u_0 \in C^{1,\beta}(M)$ for some $0 < \beta < 1$ solve 
\[\triangle_g u_0 + h u_0 = |u_0|^{2^*-2} u_0, \]
 $(\xia)_{\alpha}$ and $(\mia)_\alpha$, $1 \le i \le k$ be $k$ sequences satisfying \eqref{structure}, we let $V_1, \dots, V_k $ in $D^{1,2}(\R^n)$ be $k$ solutions of \eqref{yamabe} and we let $(\mWa)_\alpha$ be a sequence of continuous functions satisfying \eqref{defWa}. We assume in addition that there exists a positive constant $C_1$, only depending on $n, (M,g), V_1, \dots,  V_k, u_0$ and $h$, and a sequence $(\vea)_\alpha \in \mathcal{E}$  such that for all $\alpha \ge 1$
\ben \label{errWa2}
\bal
\Big| \triangle_g \mWa + h_\alpha \mWa - &|\mWa|^{2^*-2} \mWa \Big| \le C_1(\vea+\Vert h_\alpha - h \Vert_{L^\infty(M)}) B_{0,\alpha} \\
&+ C_1\sum_{i=1}^k \Bia +  C_1 \sum \limits_{\underset{i \neq j}{i,j=0..k}}
 \Bia^{2^*-2} \Bja + C_1\vea \sum_{i=1}^k \Bia^{2^*-1}
\eal
\een
in $M$.  We keep in this section the notations of Section \ref{linear}, in particular \eqref{ineqq} and \eqref{defsigmaa}. Our first result constructs a canonical solution of \eqref{eqha} in $H^1(M)$, up to kernel elements, that looks like $\mWa$ at first-order:

\begin{prop} \label{nonlin1}
Let $(\mWa)$ be as in \eqref{defWa} and \eqref{errWa2}. There exists $\delta >0$ such that, up to a passing to a subsequence for $(\xia)_\alpha, (\mia)_\alpha$, $1 \le i \le k$, $(h_\alpha)_\alpha$ and $(\vea)_\alpha$, there exists a unique solution 
\[\vpa \in K_\alpha^{\perp} \cap \{ \vp \in H^1(M), \Vert \vp \Vert_{H^1(M)} \le \delta \}\]
 of 
\[ \Pi_{K_\alpha^{\perp}} \Bigg( \mWa + \vpa - (\triangle_g + 1)^{-1} \Big[ (1 - h_\alpha)(\mWa + \vpa) + f\big( \mWa + \vpa \big)  \Big] \Bigg) = 0,\]
where $f$ is as in \eqref{deff}.
\end{prop}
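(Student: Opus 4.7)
The strategy is a standard Lyapunov--Schmidt contraction argument. I would first rewrite the equation in fixed-point form. Set
\[
R_\alpha = \Pi_{K_\alpha^\perp}\Big(\mWa - (\triangle_g+1)^{-1}\big[(1-h_\alpha)\mWa + f(\mWa)\big]\Big),
\]
which corresponds, after applying $\triangle_g+1$, to projecting the error $\triangle_g\mWa + h_\alpha \mWa - f(\mWa)$ onto $K_\alpha^\perp$. Let $L_\alpha\colon K_\alpha^\perp\to K_\alpha^\perp$ be the operator from the statement preceding Proposition \ref{propH1} and set
\[
Q_\alpha(\vp) = \Pi_{K_\alpha^\perp}(\triangle_g+1)^{-1}\big[f(\mWa+\vp)-f(\mWa)-f'(\mWa)\vp\big].
\]
Then the equation in the proposition is equivalent to $L_\alpha \vp = -R_\alpha - Q_\alpha(\vp)$. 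By Proposition \ref{propH1} the operator $L_\alpha$ is a bicontinuous isomorphism of $K_\alpha^\perp$, so the equation becomes the fixed-point problem $\vp = T_\alpha(\vp) := -L_\alpha^{-1}\bigl(R_\alpha + Q_\alpha(\vp)\bigr)$.

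Next I would show that $T_\alpha$ is a contraction of a small ball in $K_\alpha^\perp$, uniformly in $\alpha$. For the linear term I would use the Sobolev embedding $L^{\frac{2n}{n+2}}(M) \hookrightarrow H^{-1}(M)$ together with \eqref{errWa2}: a direct computation of $\|B_{i,\alpha}\|_{L^{2n/(n+2)}}$, $\|B_{i,\alpha}^{2^*-2}B_{j,\alpha}\|_{L^{2n/(n+2)}}$ and $\|B_{i,\alpha}^{2^*-1}\|_{L^{2n/(n+2)}}$ by scaling around $\xia$ shows that each of the four families of terms on the right of \eqref{errWa2} has $L^{2n/(n+2)}$ norm that goes to zero (the $B_{i,\alpha}$ and cross terms by powers of $\mia$, the $\vea B_{i,\alpha}^{2^*-1}$ term by the factor $\vea$, and the $B_{0,\alpha}$ term by $\vea + \|h_\alpha-h\|_\infty$). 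Hence $\|R_\alpha\|_{H^1(M)}\to 0$ as $\alpha\to+\infty$. For the nonlinear term I would use the standard pointwise bound
\[
\big|f(\mWa+\vp)-f(\mWa)-f'(\mWa)\vp\big| \lesssim |\mWa|^{2^*-3}\vp^2 + |\vp|^{2^*-1}
\]
(with the obvious modification when $2^*-1<2$, in which case one just uses $|\vp|^{2^*-1}$), combined with the Sobolev embedding $H^1\hookrightarrow L^{2^*}$, to deduce
\[
\|Q_\alpha(\vp)-Q_\alpha(\psi)\|_{H^1(M)} \lesssim \big(\|\vp\|_{H^1}+\|\psi\|_{H^1}\big)^{\theta-1}\|\vp-\psi\|_{H^1(M)}
\]
for some $\theta>1$ (with $\theta = \min(2, 2^*-1)$), uniformly in $\alpha$.

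With these two bounds the Banach fixed-point theorem applies: choose $\delta>0$ so small that the contraction constant is $\le 1/2$ on the closed ball of radius $\delta$ in $K_\alpha^\perp$, and pick $\alpha$ large enough (which corresponds, in the statement, to passing to a subsequence in $(\xia)_\alpha,(\mia)_\alpha,(h_\alpha)_\alpha,(\vea)_\alpha$) so that $\|L_\alpha^{-1}R_\alpha\|_{H^1(M)}\le \delta/2$. This produces a unique $\vpa\in K_\alpha^\perp$ with $\|\vpa\|_{H^1(M)}\le \delta$.

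The main obstacle I anticipate is not really in the contraction argument itself, which is textbook once Proposition \ref{propH1} is available, but in extracting from \eqref{errWa2} the correct smallness of $\|R_\alpha\|_{H^1}$: one must carefully track that although $\sum_i B_{i,\alpha}$ and the interaction terms $B_{i,\alpha}^{2^*-2}B_{j,\alpha}$ appear without a small prefactor, their $L^{2n/(n+2)}$ norms are automatically small due to their concentration profiles and the structure condition \eqref{structure} on $(\mia)_\alpha,(\xia)_\alpha$. The rest is bookkeeping.
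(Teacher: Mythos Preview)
Your proposal is correct and is exactly the standard Lyapunov--Schmidt contraction the paper has in mind: the paper in fact omits the proof entirely, citing it as a simple variant of the nonlinear procedure in Esposito--Pistoia--V\'etois \cite{EspositoPistoiaVetois} and Robert--V\'etois \cite{RobertVetois}. Your reformulation via $L_\alpha^{-1}$ from Proposition~\ref{propH1}, the smallness of $\|R_\alpha\|_{H^1}$ extracted from \eqref{errWa2} and the structure condition \eqref{structure}, and the superlinear bound on $Q_\alpha$ are precisely the ingredients of that standard argument.
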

This result is a simple variant of the standard nonlinear procedure found e.g. in Esposito-Pistoia-V\'etois \cite{EspositoPistoiaVetois} or Robert-V\'etois \cite{RobertVetois} and we omit its proof. We now prove that the linear theory of Theorem \ref{proplin} provides pointwise estimates on the remainder constructed in Proposition \ref{nonlin1}:

\begin{prop} \label{nonlin2}
Let $(h_\alpha)_\alpha$ be a sequence of functions converging towards $h$ in $L^\infty(M)$, $k \ge 1$ be an integer, $u_0 \in C^{1,\beta}(M)$ for $0 < \beta < 1$ be a solution of \eqref{limiteq}, $V_1, \dots, V_k $ in $D^{1,2}(\R^n)$ be $k$ solutions of \eqref{yamabe} and let $(\xia)_{\alpha}$ and $(\mia)_\alpha$, $1 \le i \le k$ be $k$ sequences satisfying \eqref{structure}. Let $(\mWa)_\alpha$ be a sequence of continuous functions in $M$ satisfying \eqref{defWa} and \eqref{errWa2} for some $(\vea)_\alpha \in \mathcal{E}$. There exists $C>0$ independent of $\alpha$ and there exists a sequence $(\sa)_\alpha$ satisfying \eqref{defsigmaa} such that, up to passing to a subsequence for $(\xia)_\alpha, (\mia)_\alpha$, $1 \le i \le k$, $(h_\alpha)_\alpha$ and $(\vea)_\alpha$, the equation
\ben \label{eqnonlin}
 \Pi_{K_\alpha^{\perp}} \Bigg( \mWa + \vpa - (\triangle_g + 1)^{-1} \Big[ (1 - h_\alpha)(\mWa + \vpa) + f\big( \mWa + \vpa \big)  \Big] \Bigg) = 0
 \een
has a unique solution $\vpa$ in
\[ \bal K_\alpha^{\perp} \cap \Bigg \{ \vp \in C^0(M), \quad &|\vp(x)| \le C(\vea + \sa)^{\frac12} \sum_{i=0}^k \Bia(x) \\
&+ C \sum_{i=1}^k \Tia(x) \Bia(x)  \quad \textrm{ for all } x \in M \Bigg \} . \eal \]
 This solution $\vpa$ also satisfies
\ben \label{normereste}
\Vert \vpa \Vert_{H^1(M)} \le C(\vea + \sa).
\een
\end{prop}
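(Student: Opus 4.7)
The plan is to construct $\vpa$ as the unique fixed point of a contraction map in a weighted pointwise ball, using Theorem~\ref{proplin} to invert the linearised operator. In strong form, $\vpa \in K_\alpha^\perp$ solves \eqref{eqnonlin} if and only if there exist real numbers $\lija$, $0 \le i \le k$, $1 \le j \le n_i$, such that
\begin{equation*}
\triangle_g \vpa + h_\alpha \vpa - f'(\mWa)\vpa = -R_\alpha + N_\alpha(\vpa) + \sum \limits_{\underset{j =1 .. n_i }{i=0..k}} \lija (\triangle_g + 1)Z_{i,j, \alpha},
\end{equation*}
where $R_\alpha = \triangle_g \mWa + h_\alpha \mWa - f(\mWa)$ is the error of the approximate solution and $N_\alpha(\vp) = f(\mWa + \vp) - f(\mWa) - f'(\mWa)\vp$ is the purely nonlinear remainder. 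For $\vp$ in the pointwise ball
\[ \mathcal{B}_\alpha = \Big\{ \vp \in K_\alpha^\perp \cap C^0(M) : |\vp(x)| \le C(\vea + \sa)^{1/2}\sum_{i=0}^k \Bia(x) + C\sum_{i=1}^k \Tia(x)\Bia(x) \textrm{ for all } x \in M \Big\}, \]
let $\mathcal{T}_\alpha(\vp)$ be the unique solution in $K_\alpha^\perp$ of the linearised problem with data $-R_\alpha + N_\alpha(\vp)$ produced by Theorem~\ref{proplin}. A fixed point of $\mathcal{T}_\alpha$ is the desired $\vpa$.

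The core task is to verify that \eqref{propRa} is satisfied by $-R_\alpha + N_\alpha(\vp)$ whenever $\vp \in \mathcal{B}_\alpha$, with parameters $\ta = O(\vea + \Vert h_\alpha - h \Vert_{L^\infty(M)})$, $\ea = O((\vea+\sa)^{1/2})$ and $\ga = O(1)$. The linear part $-R_\alpha$ satisfies \eqref{propRa} directly via \eqref{errWa2}. For the nonlinear part I would use the elementary inequality
\[ |N_\alpha(\vp)| \lesssim |\mWa|^{2^*-3}|\vp|^2 + |\vp|^{2^*-1} \]
(with the natural modification when $2^*-3 < 0$), together with $|\mWa| \lesssim \sum_i \Bia$ from \eqref{calcul3}, and split the estimate along the areas of influence $B_g(\xia, \ria/R_0)$ from Subsection~\ref{radius}. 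Within each area of influence Lemma~\ref{compbulles} lets one dominate each cross-term by an admissible term in \eqref{propRa}, with an $\ea$-coefficient of size $O((\vea + \sa)^{1/2})$. Applying \eqref{estopt} of Theorem~\ref{proplin} then yields $\mathcal{T}_\alpha(\mathcal{B}_\alpha) \subset \mathcal{B}_\alpha$ for $C$ chosen larger than twice the constant $C_0$ of Theorem~\ref{proplin}. Running the same estimates on $N_\alpha(\vp_1) - N_\alpha(\vp_2)$ shows that $\mathcal{T}_\alpha$ is a strict contraction on $\mathcal{B}_\alpha$ with Lipschitz constant $O((\vea+\sa)^{1/2}) = o(1)$. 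Banach's fixed-point theorem then gives existence and uniqueness of $\vpa$ in $\mathcal{B}_\alpha$, after passing to the common subsequence for $(\xia)_\alpha$, $(\mia)_\alpha$, $(h_\alpha)_\alpha$ and $(\vea)_\alpha$ prescribed by Theorem~\ref{proplin}. The $H^1$ estimate \eqref{normereste} then follows from Proposition~\ref{propH1} applied to $\vpa$: the $L^{2n/(n+2)}(M)$-norm of $-R_\alpha + N_\alpha(\vpa)$ is of order $\vea + \sa$ thanks to \eqref{errWa2} and the pointwise bound just established.

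The main obstacle is the careful bookkeeping of the nonlinear term $N_\alpha(\vp)$. Its expansion generates cross-products of the form $\Bia^{2^*-3}\Bja \Bka$ for triples $(i,j,k)$, which are not directly of the two admissible shapes $\Bia^{2^*-1}$ or $\Bia^{2^*-2}\Bja$ appearing in \eqref{propRa}; dominating them by admissible shapes requires the full hierarchy of bubble comparisons developed in Subsection~\ref{radius}, together with Lemma~\ref{compbulles} on each area of influence. The fractional exponent $1/2$ in $(\vea + \sa)^{1/2}$ is precisely the threshold at which the quadratic nature of $N_\alpha$ still produces an $\ea$-parameter that is strictly smaller than the radius of $\mathcal{B}_\alpha$, allowing the fixed-point inclusion $\mathcal{T}_\alpha(\mathcal{B}_\alpha) \subset \mathcal{B}_\alpha$ and the contraction property to hold simultaneously across all dimensions $n \ge 3$.
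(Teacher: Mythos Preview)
Your overall architecture is the paper's: reformulate \eqref{eqnonlin} as a fixed-point problem for $\mathcal{T}_\alpha$ in a weighted pointwise ball, feed $-R_\alpha + N_\alpha(\vp)$ into Theorem~\ref{proplin}, and run Banach's theorem. The $H^1$ bound via Proposition~\ref{propH1} at the end is also what the paper does.

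Where you diverge is in the treatment of $N_\alpha(\vp)$, and there your proposal has a gap. You identify the obstacle as the triple cross-products $\Bia^{2^*-3}\Bja B_{k,\alpha}$ and propose to tame them by splitting along the areas of influence $B_g(\xia,\ria/R_0)$ and invoking Lemma~\ref{compbulles}. But those cross-products are harmless: since there are only finitely many bubbles, $(\sum_i \Bia)^{2^*-1} \lesssim \sum_i \Bia^{2^*-1}$ directly, with no bubble-tree geometry required. The genuine difficulty is the $\Theta_{i,\alpha}\Bia$ component of $\vp$ in the ball $\mathcal{B}_\alpha$: when you expand $N_\alpha(\vp)$ you produce mixed powers like $(\sum_i\Bia)^{2^*-1-\tau}(\sum_i\Tia\Bia)^\tau$, and Lemma~\ref{compbulles} says nothing about $\Tia$. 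On the area of influence of a bubble with $\ria$ bounded away from zero (which happens when $B_{0,\alpha}\equiv 0$), $\Tia$ is \emph{not} uniformly small, so your splitting does not yield an $\ea$-coefficient of size $(\vea+\sa)^{1/2}$.

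The paper handles this with a purely pointwise inequality, \eqref{prnonlin1}, proved as Lemma~\ref{lemtec} in the Appendix: for any $0<\tau\le 2^*-1$,
\[
\Big(\sum_{i=0}^k\Bia\Big)^{2^*-1-\tau}\Big(\sum_{i=1}^k\Tia\Bia\Big)^\tau \le \sa\Big(\sum_{i=0}^k\Bia + \sum_{i=1}^k\Bia^{2^*-1}\Big).
\]
The proof splits at the scale $\sqrt{\mia}$ (not $\ria$): inside $B_g(\xia,\sqrt{\mia})$ one has $\Tia\lesssim\sqrt{\mu_{1,\alpha}}$, while outside $\Tia\Bia\lesssim\sqrt{\mu_{1,\alpha}}$ in absolute value. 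Combined with the Taylor bound $|N_\alpha(\vp)|\lesssim(\sum_i\Bia)^{2^*-2-\tau}|\vp|^{1+\tau}+|\vp|^{2^*-1}$ for a fixed $0<\tau<2^*-2$ (which avoids your dimension-dependent ``natural modification'' for $2^*-3<0$), this puts $N_\alpha(\vp)$ back into the shape \eqref{propRa} with no geometric decomposition. Once you have \eqref{prnonlin1}, both the stability and contraction steps are two-line computations; the bubble-tree machinery of Subsection~\ref{radius} is not used at all in this proof.
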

Recall that $\mathcal{E}$ is as in \eqref{defE}, $\Tia$ is as in \eqref{defTi} and $f$ is as in \eqref{deff}.

\begin{proof}
Let $(h_\alpha)_\alpha$, $u_0$, $V_1, \dots, V_k $, $(\xia)_{\alpha}$ and $(\mia)_\alpha$, $1 \le i \le k$ be as in the statement of Proposition \ref{nonlin2} and let $(\mWa)_\alpha$ be a sequence of continuous functions satisfying \eqref{defWa} and \eqref{errWa2}. Throughout this proof we fix a real number $0 < \tau < 2^*-2$. We claim that there exists a sequence $(\sa^1)_{\alpha}$ satisfying \eqref{defsigmaa} such that for all $x \in M$ 
\ben  \label{prnonlin1}
\begin{aligned}
& \Big( \sum_{i=0}^k \Bia (x) \Big)^{2^*-1-\tau} \Big( \sum_{i=1}^k \Tia(x) \Bia(x) \Big)^{\tau} +  \Big( \sum_{i=1}^k \Tia(x) \Bia(x) \Big)^{2^*-1}    \\
& + \Big( \sum_{i=0}^k \Bia (x) \Big)^{1+\tau} \Big( \sum_{i=1}^k \Tia(x) \Bia(x) \Big)^{2^*-2-\tau} \\
& + \Big( \sum_{i=0}^k \Bia (x) \Big)^{2^*-2} \Big( \sum_{i=1}^k \Tia(x) \Bia(x) \Big) \\
& \le \sa^1 \Big( \sum_{i=0}^k \Bia(x) + \sum_{i=1}^k \Bia(x)^{2^*-1} \Big).
\end{aligned}
\een
This easily follows from Lemma \ref{lemtec} proven in the Appendix. Since $\mWa$ satisfies \eqref{defWa} Theorem \ref{proplin} applies and, up to passing to a subsequence for  $(\xia)_\alpha, (\mia)_\alpha$, $1 \le i \le k$ and $(h_\alpha)_\alpha$, shows the existence of $C_0>0$ and of a sequence $(\sa^2)_\alpha$ satisfying \eqref{defsigmaa} such that for any $(\ta)_\alpha, (\ea)_\alpha \in \mathcal{E}$, any bounded sequence $(\ga)_\alpha$ and any $(R_\alpha)_\alpha$ satisfying \eqref{propRa}, \eqref{estopt} holds with $\sa = \sa^2$. We now let $(\vea)_\alpha \in \mathcal{E}$ and assume in addition that $\mWa$ satisfies \eqref{errWa2}. Define the following sequence:
\ben \label{defnua}
  \nua =  \big(\sa^1 + \sa^2 + \Vert h_\alpha - h_0 \Vert_{L^\infty(M)} + \vea \big)^{\frac12}
  \een
and define 
\ben \label{defSa}
\bal 
&\mathcal{S}_\alpha = \Big \{ \vp \in C^0(M) \textrm{ such that } \\
&  |\vp(x)| \le 2C_0 C_1 \nua \sum_{i=0}^k \Bia(x) +2C_0C_1 \sum_{i=1}^k \Tia(x) \Bia(x) \textrm{ for all } x \in M  \\
& \textrm{ and } \int_M \vp (\triangle_g + 1) Z_{i,j,\alpha} dv_g = 0 \textrm{ for all } i=0 \dots k, j=1 \dots  n_i  \Big \},
\eal 
\een
where $C_1$ is the constant appearing in \eqref{errWa2}. We endow $\mathcal{S}_\alpha$ with the norm 
\[ \Vert \vp \Vert_* = \left \Vert \frac{\vp}{\nua\sum_{i=0}^k \Bia + \sum_{i=1}^k \Tia \Bia} \right \Vert_{L^\infty(M)}, \quad \vp \in \mathcal{S}_\alpha. \]
For any fixed value of  $\alpha$, $(\mathcal{S}_\alpha, \Vert \cdot \Vert_*)$ is a Banach space and it is easily seen that if $\vp \in \mathcal{S}_\alpha \cap C^1(M)$ then $\vp \in K_\alpha^{\perp}$ where $
K_\alpha$ is as in \eqref{defKa}. If $\vp \in C^0(M)$ we define $T_\alpha(\vp) \in K_\alpha^{\perp}$ as the unique solution  in $K_\alpha^{\perp}$ of
\[ \big(\triangle_g + h_\alpha - f'(\mWa) \big)T_\alpha(\vp) = \tilde{R}_\alpha + N_\alpha(\vp) +  \sum \limits_{\underset{j =1 .. n_i }{i=0..k}} \lija ( \triangle_g   + 1)Z_{i,j, \alpha} \]
for some unique real numbers $\lija$, where we have let  
\ben \label{Rapreuve}
\bal \tilde{R}_\alpha & = - \triangle_g \mWa - h_\alpha \mWa + |\mWa|^{2^*-2} \mWa \quad \textrm{ and }  \\
N_\alpha(\vp) & = f(\mWa+ \vp) - f(\mWa) - f'(\mWa) \vp. \eal
\een
The existence of $T_\alpha(\vp)$ is ensured by Proposition \ref{propH1}, and by standard elliptic theory $T_\alpha(\vp) \in C^1(M)$. To prove Proposition \ref{nonlin2} we will prove that, up to a subsequence, $T_\alpha$ is a contraction on $S_\alpha$.

\medskip

\textbf{Step $1$: $T_\alpha$ stabilizes $\mathcal{S}_\alpha$.} If $\vp \in \mathcal{S}_\alpha$ we have 
\[ |N_\alpha(\vp)| \lesssim \Big(\sum_{i=0}^k \Bia \Big)^{2^*-2-\tau}|\vp|^{1+\tau} + |\vp|^{2^*-1} \]
in $M$. By definition of $\mathcal{S}_\alpha$ and by \eqref{prnonlin1} there exists $C_2 >0$ such that
\[ 
\begin{aligned}
|N_\alpha(\vp)| \le C_2(\sa^1 + \nua^{1+\tau}) \sum_{i=1}^k \Bia ^{2^*-1} + C_2 \sa^1  \sum_{i=0}^k \Bia
\end{aligned}
\]
in $M$. Together with \eqref{errWa2} we thus have 
\[ \big| \tilde{R}_\alpha +  N_\alpha(\vp) \big| \le  \tau_\alpha B_{0,\alpha} +  \ea \sum_{i=1}^k \Bia^{2^*-1} + \gamma_\alpha \Big( \sum_{i=1}^k \Bia +   \sum \limits_{\underset{i \neq j}{i,j=0..k}} \Bia^{2^*-2} \Bja\Big) \]
in $M$, where we have let  
\ben \label{suites}
 \bal \tau_\alpha&  = C_1(\vea +  \Vert h_\alpha - h_0 \Vert_{L^\infty(M)}) + C_2 \sa^1, \\
\ea & = C_1 \vea + C_2 (\va^{1+\tau} + \sa^1) , \\
\ga & = C_1 + C_2 \sa^1 .\eal \een
A direct application of Theorem \ref{proplin} with $R_\alpha = \tilde{R}_\alpha +  N_\alpha(\vp)$ then shows that, for any $x \in M$,
\[\bal
&  |T_\alpha(\vp)(x)| \le C_0 \big( \ta + \ea + \sa^2 \gamma_\alpha   \big) \sum_{i=0}^k \Bia(x)  + C_0\gamma_\alpha \sum_{i=1}^k \Tia(x) B_i(x) 
\eal \]
holds with $\ta, \ea, \ga$ given by \eqref{suites}. Since $\sa^1 \to 0$ and since, by \eqref{defnua}, we have $ \ta + \ea + \sa^1 \gamma_\alpha = o(\nua)$ as $\alpha \to + \infty$ we can assume, up to passing to a subsequence for $(\xia)_\alpha, (\mia)_\alpha$, $1 \le i \le k$, $(h_\alpha)_\alpha$ and $(\vea)_\alpha$, that 
 \[  \ta + \ea + \sa^1 \gamma_\alpha  \le 2 C_1 \nua \textrm{ and } \gamma_\alpha \le 2 C_1. \]
By definition of $\mathcal{S}_\alpha$ in \eqref{defSa}, this shows that $T_\alpha$ stabilises $\mathcal{S}_\alpha$.
 
 \medskip

 \textbf{Step $2$: $T_\alpha$ is a contraction on $\mathcal{S}_\alpha$.} Let $\vp_1, \vp_2 \in \mathcal{S}_\alpha$. By definition $T_\alpha(\vp_1)-  T_\alpha(\vp_2)$ satisfies
 \ben \label{ctrc} \bal
  \big(\triangle_g + h_\alpha &- f'(\mWa) \big)(T_\alpha(\vp_1) - T_\alpha(\vp_2))  = \sum \limits_{\underset{j =1 .. n_i }{i=0..k}} \tilde{\lambda}_{i,j,\alpha} ( \triangle_g   + 1)Z_{i,j, \alpha} \\
  & + f(\mWa + \vp_1) - f(\mWa + \vp_2) - f'(\mWa)(\vp_1 - \vp_2),
  \eal \een
for some real numbers $ \tilde{\lambda}_{i,j,\alpha} $. By the mean-value inequality we have
\[ \begin{aligned}
\big|  f(\mWa  + \vp_1)& - f(\mWa + \vp_2) - f'(\mWa)(\vp_1 - \vp_2) \big| \\
& \lesssim \Big(\sum_{i=0}^k \Bia \Big)^{2^*-2-\tau}(|\vp_1|^{\tau} + |\vp_2|^{\tau}) |\vp_1 - \vp_2|
\end{aligned} \]
in $M$. By \eqref{prnonlin1}, by definition of $\mathcal{S}_\alpha$ in \eqref{defSa} and of the norm $\Vert \cdot \Vert_*$ we get that there exists a positive constant $C_3 $ such that
\[ \begin{aligned}
& \big|  f(\mWa  + \vp_1) - f(\mWa + \vp_2) - f'(\mWa)(\vp_1 - \vp_2) \big| \\
& \le C_3 \Vert \vp_1  - \vp_2 \Vert_{*} (\nua^{1+\tau} + \nua^{\tau}\sa^1 + \nua \sa^1 + \sa^1) \Big( \sum_{i=1}^k \Bia^{2^*-1}  +  \sum_{i=0}^k \Bia \Big)
\end{aligned} \]
everywhere in $M$. Note, by \eqref{defSa}, that $ \Vert \vp_1  - \vp_2 \Vert_{*}$ is uniformly bounded. We thus have 
\[ \bal 
\big|    f(\mWa  + \vp_1)& - f(\mWa + \vp_2) - f'(\mWa)(\vp_1 - \vp_2) \big|\\
&  \le  \tau_\alpha B_{0,\alpha} +  \ea \sum_{i=1}^k \Bia^{2^*-1} + \gamma_\alpha \Big( \sum_{i=1}^k \Bia +   \sum \limits_{\underset{i \neq j}{i,j=0..k}} \Bia^{2^*-2} \Bja\Big)
\eal\]
in $M$, where we have let  
\[ \tau_\alpha  = \ea = \ga = C_3 \Vert \vp_1  - \vp_2 \Vert_{*}  \big( \nua^{1+\tau} + \nua^{\tau}\sa^1 + \nua \sa^1 + \sa^1\big).\]
A direct application of Theorem \ref{proplin} with $R_\alpha =   f(\mWa  + \vp_1) - f(\mWa + \vp_2) - f'(\mWa)(\vp_1 - \vp_2) $ then shows that
\[\bal
&  \big|   T_\alpha(\vp_1) - T_\alpha(\vp_2) \big| \le C_0 \big( \ta + \ea + \sa^2 \gamma_\alpha   \big) \sum_{i=0}^k \Bia  + C_0\gamma_\alpha \sum_{i=1}^k \Tia B_i
\eal \]
holds everywhere in $M$. We have $\ga = o(1)$ and 
\[ C_3 \big( \nua^{1+\tau} + \nua^{\tau}\sa^1 + \nua \sa^1 + \sa^1 \big) = o(\nua) \]
as $\alpha \to + \infty$.  Up to passing to a subsequence for $(\xia)_\alpha, (\mia)_\alpha$, $1 \le i \le k$, $(h_\alpha)_\alpha$ and $(\vea)_\alpha$ the latter inequality becomes, by definition of $\Vert \cdot \Vert_{*}$,
 \[  \begin{aligned}
 \Vert  T_\alpha(\vp_1) - T_\alpha(\vp_2) \Vert_* & \le \frac12 \Vert \vp_1 - \vp_2\Vert_*
 \end{aligned}  \]
and shows that $T_\alpha$ is a contraction in $S_\alpha$. 

\medskip

Picard's fixed-point theorem now applies to $T_\alpha$ on $(\mathcal{S}_\alpha, \Vert \cdot \Vert_*)$ and shows the existence of a unique $\vpa \in \mathcal{S}_\alpha$ such that $T_\alpha(\vpa) = \vpa$, that is such that  
\[ (\triangle_g + h_\alpha)\big( \mWa + \vpa \big) = f\big( \mWa + \vpa \big)  +  \sum \limits_{\underset{j =1 .. n_i }{i=0..k}} \lija ( \triangle_g   + 1)Z_{i,j, \alpha} \]
for some real numbers $\lija$. Elliptic regularity shows that $\vpa$ is of class $C^1$ and thus belongs to $K_\alpha^{\perp}$. Clearly, for a function $\vp \in \mathcal{S}_\alpha\cap K_\alpha^{\perp}$, solving the latter equation is equivalent to solving \eqref{eqnonlin}. It remains to prove \eqref{normereste} to conclude the proof of Proposition \ref{nonlin2}. The equation above rewrites as 
\[ \big(\triangle_g + h_\alpha - f'(\mWa) \big) \vpa= \tilde{R}_\alpha + N_\alpha(\vpa) +  \sum \limits_{\underset{j =1 .. n_i }{i=0..k}} \lija ( \triangle_g   + 1)Z_{i,j, \alpha}, \]
where $\tilde{R}_\alpha$ is as in \eqref{Rapreuve}. Since $\vpa \in \mathcal{S}_\alpha$ we have, by \eqref{defnua}, $\Vert \vpa \Vert_{L^{2^*}(M)} = o(1)$ and
\[ \Vert N_\alpha(\vpa) \Vert_{L^{\frac{2n}{n+2}}(M)} \lesssim\Vert \vpa \Vert_{L^{2^*}(M)}^{1+\tau}. \]
Independently, we also have with \eqref{errWa2} and \eqref{Rapreuve} that 
\[ \Vert \tilde{R}_\alpha \Vert_{L^{\frac{2n}{n+2}}(M)} \lesssim \vea + \sa. \]
Sobolev's inequality and Proposition \ref{propH1} then show that 
\[ \bal 
 \Vert \vpa \Vert_{L^{2^*}(M)} \lesssim \Vert \vpa \Vert_{H^1(M)} & \lesssim \Vert (\triangle_g + 1)^{-1}(  \tilde{R}_\alpha + N_\alpha(\vpa)) \Vert_{H^1(M)} \\
& \lesssim \vea + \sa + \Vert \vpa \Vert_{L^{2^*}(M)}^{1+\tau},
\eal \]
which proves \eqref{normereste}.
\end{proof}

\subsection{A priori estimates for solutions of \eqref{eqnonlin}}

We now prove a slightly more general result than Theorem \ref{theorieC0}, that states that solutions of \eqref{eqnonlin} inherit pointwise estimates:

\begin{theo} \label{theopsC0}
Let $(h_\alpha)_\alpha$ be a sequence of functions converging towards $h$ in $L^\infty(M)$, $k \ge 1$ be an integer, $u_0 \in C^{1,\beta}(M)$ for $0 < \beta < 1$ be a solution of \eqref{limiteq}, $V_1, \dots, V_k $ in $D^{1,2}(\R^n)$ be $k$ solutions of \eqref{yamabe} and let $(\xia)_{\alpha}$ and $(\mia)_\alpha$, $1 \le i \le k$ be $k$ sequences satisfying \eqref{structure}. Let $(\mWa)_\alpha$ be a sequence of continuous functions in $M$ satisfying \eqref{defWa} and \eqref{errWa2} for some $(\vea)_\alpha \in \mathcal{E}$ and let $\vpa \in K_\alpha^{\perp}$ satisfy
\be
 \Pi_{K_\alpha^{\perp}} \Bigg( \mWa + \vpa - (\triangle_g + 1)^{-1} \Big[ (1 - h_\alpha)(\mWa + \vpa) + f\big( \mWa + \vpa \big)  \Big] \Bigg) = 0
 \ee
 and 
 \[ \Vert \vpa \Vert_{H^1(M)} = o(1) \]
 as $\alpha \to + \infty$. Let $\ua = \mWa + \vpa$. Up to passing to a subsequence for $(\ua)_\alpha$ we have, for any sequence $(\xa)_\alpha$ of points of $M$,
 \ben \label{contreste}
\ua(\xa) = u_0(\xa) + \sum_{i=1}^k \Via(\xa) + o \Big( \sum_{i=0}^k \Bia(\xa) \Big),
  \een
  where $\Via$ is as in \eqref{Via} and $\Bia$ as in \eqref{defBia}.
 \end{theo}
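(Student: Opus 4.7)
The plan is to identify the given remainder $\vpa$ with the canonical one built in Proposition \ref{nonlin2}, then to swap the approximation $\mWa$ for the canonical $u_0 + \sum_{i=1}^k \Via$, and finally to upgrade the resulting pointwise bound from $O\big(\sum \Theta_{i,\alpha} B_{i,\alpha}\big)$ to $o\big(\sum_{i=0}^k B_{i,\alpha}\big)$ using the bubble-tree structure of Subsection \ref{radius}.

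First, the hypothesis $\Vert \vpa \Vert_{H^1(M)} = o(1)$ places $\vpa$ in the ball where Proposition \ref{nonlin1} provides uniqueness of small solutions of \eqref{eqnonlin} in $K_\alpha^{\perp}$. Proposition \ref{nonlin2} exhibits such a solution and, by \eqref{normereste}, its $H^1$ norm is $O(\vea+\sa) = o(1)$, so the two must coincide. Consequently $\vpa$ satisfies
\[ |\vpa(x)| \le C(\vea + \sa)^{1/2} \sum_{i=0}^k B_{i,\alpha}(x) + C \sum_{i=1}^k \Theta_{i,\alpha}(x) B_{i,\alpha}(x) \]
for every $x \in M$.

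Next, to bring the bubbles $\Via$ of \eqref{Via} into the final conclusion, I would re-run the construction with the canonical approximation $\mWa^{*} := u_0 + \sum_{i=1}^k \Via$. This $\mWa^{*}$ satisfies \eqref{defWa} (taking $W_{i,\alpha} = \Via$, $u_{0,\alpha} = u_0$) and, by \eqref{errVxa} together with the standard Euclidean bubble-interaction estimates, also \eqref{errWa2}, so Propositions \ref{nonlin1}--\ref{nonlin2} produce a unique small $\vpa^{*} \in K_\alpha^{\perp}$ obeying the analogous pointwise bound. Both $\mWa + \vpa$ and $\mWa^{*} + \vpa^{*}$ solve \eqref{eqha} modulo kernel terms $\sum \lija(\triangle_g+1)Z_{i,j,\alpha}$; after absorbing the small kernel drift between the two Lyapunov--Schmidt slices $\mWa + K_\alpha^{\perp}$ and $\mWa^{*} + K_\alpha^{\perp}$ into a finite-dimensional adjustment of the parameters $(\xia,\mia)$ and of $u_0$ along $K_0$, the $H^1$-uniqueness of Proposition \ref{nonlin1} forces $\mWa + \vpa = \mWa^{*} + \vpa^{*}$. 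Therefore $\ua - u_0 - \sum_{i=1}^k \Via = \vpa^{*}$, which inherits the pointwise bound above.

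Finally, to reach \eqref{contreste} I would upgrade the bound on $\vpa^{*}$ uniformly to $o(\sum_{i=0}^k B_{i,\alpha})$. The first piece is directly $o(\sum B)$ since $(\vea+\sa)^{1/2} \to 0$. For the second piece I use the bubble-tree decomposition: on each area of influence $B_g(\xia, \ria/R_0)$, the definition \eqref{defri} forces $\ria \to 0$ (either from $\ria \le \sqrt{\mia}$ when $B_{0,\alpha} \equiv 1$, or from $\ria \le \min_{j \in \mathcal{A}_i} s_{i,j,\alpha} \to 0$ when a lower bubble is present), which by \eqref{defTi} gives $\Theta_{i,\alpha}(x) \to 0$ while $B_{i,\alpha}$ dominates $\sum_{j=0}^k B_{j,\alpha}$; outside all areas of influence, either $B_{0,\alpha} \equiv 1$ dominates or the remaining bubble contributions $B_{i,\alpha}$ are themselves $o(B_{j,\alpha})$ for a dominant bubble $j$, so $\Theta_{i,\alpha} B_{i,\alpha}$ is absorbed. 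The hardest step will be the swap in the middle paragraph: $\mWa - \mWa^{*}$ generically has a nonzero component along $K_\alpha$, so the two approximations lie on distinct Lyapunov--Schmidt slices and a careful finite-dimensional adjustment of the bubble parameters (made delicate by possible kernels of $\triangle_g + h$ and of the linearizations at $V_i$) is needed before the uniqueness clause of Proposition \ref{nonlin1} applies.
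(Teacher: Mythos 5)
Your first step (identifying $\vpa$ with the canonical remainder of Proposition \ref{nonlin2} through the uniqueness clause of Proposition \ref{nonlin1}, hence obtaining the pointwise bound \eqref{C0ps1}) is exactly how the paper begins. After that, however, the proposal diverges and contains two genuine gaps. First, the ``swap of Lyapunov--Schmidt slices'' from $\mWa$ to $\mWa^{*}=u_0+\sum_i\Via$ is not justified: in Theorem \ref{theopsC0} the $W_{i,\alpha}$ are \emph{arbitrary} functions satisfying \eqref{defWa}, not of the form $\Via$ plus kernel elements, so $\mWa-\mWa^{*}$ need not be a ``small kernel drift'' at all, and the proposed finite-dimensional adjustment of $(\xia,\mia)$ and of $u_0$ along $K_0$ (delicate precisely because no non-degeneracy of the $V_i$ is assumed, and because changing $u_0$ would alter the very function appearing in \eqref{contreste}) is never carried out. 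The paper never needs such a swap: it keeps $\mWa$ and recovers $u_0+\sum_i\Via$ directly.

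Second, and more seriously, the final ``upgrade'' of the term $\sum_i\Tia\Bia$ in \eqref{C0ps1} to $o\big(\sum_{i=0}^k\Bia\big)$ is false. Take $k=1$, $u_0\equiv 0$, $\ker(\triangle_g+h)=\{0\}$ and $n\ge 5$: then $B_{0,\alpha}\equiv 0$, $\mathcal{A}_1=\emptyset$, so $r_{1,\alpha}=i_g/2$ does \emph{not} tend to $0$ (your claim that \eqref{defri} forces $\ria\to 0$ fails here), and at a point with $d_g(x_{1,\alpha},x)\asymp 1$ one has $\Theta_{1,\alpha}(x)\asymp 1$, hence $\Theta_{1,\alpha}B_{1,\alpha}\asymp B_{1,\alpha}=\sum_{i=0}^k\Bia$, which is an $O$ and not an $o$. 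This is exactly the point made in the remark following the proof of Theorem \ref{theorieC0}: estimate \eqref{C0ps1} is \emph{not} as precise as \eqref{contreste} at finite distance from the $\xia$'s, so no bubble-tree case analysis on \eqref{C0ps1} alone can close the argument. The missing idea in the paper's proof is a contradiction argument combined with a Green's representation formula for $\triangle_g+h$: assuming \eqref{contreste} fails along $(\xa)_\alpha$ forces $\min_i d_g(\xia,\xa)/\mu_{1,\alpha}\to+\infty$, the kernel multipliers are controlled by \eqref{C0ps2}, the neck and exterior regions contribute $o\big(\sum_i\Bia(\xa)\big)$, and the contribution of $B_g(\xia,R\mia)$ to $\int_M G_h(\xa,\cdot)|\ua|^{2^*-2}\ua\,dv_g$ is evaluated by rescaling and dominated convergence: it equals $(1+o(1))(n-2)\omega_{n-1}\lambda_i\,\mia^{\frac{n-2}{2}}G_h(\xa,\xia)+O(R^{-2}\Bia(\xa))$, which by \eqref{caracl} and the definition \eqref{Vxa} is $\Via(\xa)+o(\Bia(\xa))+O(R^{-2}\Bia(\xa))$. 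It is this identification of the far-field of each bubble through its ``mass'' $\lambda_i$ and the Green's function --- not an absorption of $\Tia\Bia$ --- that produces \eqref{contreste}, and it is absent from your proposal.
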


Passing to a subsequence for $(\ua)_\alpha$ amounts to passing to a subsequence for $(\xia)_\alpha, (\mia)_\alpha$, $1 \le i \le k$, $(h_\alpha)_\alpha$, $(\vea)_\alpha$ and $(\vp_\alpha)_\alpha$. As will be momentarily proven, Theorem \ref{theorieC0} follows from Theorem \ref{theopsC0}. Theorem \ref{theopsC0} shows in particular that for approximate solutions $\mWa + \vpa$ to \eqref{eqha}, ie satisfying only \eqref{eqnonlin} and not necessarily \eqref{eqha}, the remainder $\vpa$ is globally pointwise small with respect to $u_0 + \sum_{i=1}^k \Via$. In particular, approximate solution obtained by a Lyapunov-schmidt method satisfy \eqref{contreste}. This applies to the examples of blowing-up solutions to second-order critical elliptic equations obtained in Ambrosetti-Malchiodi  \cite{AmbrosettiMalchiodi}, Brendle \cite{Brendle},  Brendle-Marques  \cite{BrendleMarques}, Esposito-Pistoia-V\'etois  \cite{EspositoPistoiaVetois}, Morabito-Pistoia-Vaira \cite{MorabitoPistoiaVaira}, Pistoia-Vaira \cite{PistoiaVaira},  Robert-V\'etois \cite{RobertVetois5, RobertVetois2,  RobertVetois}, Thizy \cite{ThizyLinNi} and Thizy-V\'etois \cite{ThizyVetois}. 
For strongly coupled systems or in nonsymmetric situations, pointwise estimates for approximate solutions constructed by a Lyapunov-schmidt method have proven more effective than $H^1(M)$ estimates to obtain the existence of blowing-up solutions, see e.g. Premoselli  \cite{Premoselli6, Premoselli7, Premoselli12} or Premoselli-Thizy \cite{PremoselliThizy}.

The approach that we follow here to prove Theorem \ref{theopsC0} (via the linear result of Theorem \ref{proplin} and the nonlinear perturbation argument of Proposition \ref{nonlin2}) is particularly robust and bypasses many difficulties that are usually encountered when trying to obtain pointwise bounds for solutions of critical nonlinear equations.

\begin{proof}
We first claim that exists $C >0$ and a sequence $(\sa)_\alpha$ satisfying \eqref{defsigmaa} such that, up to passing to a subsequence, 
\ben \label{C0ps1}
|\vpa(x)| \le C (\vea + \sa)^{\frac12} \sum_{i=0}^k \Bia(x) + C \sum_{i=1}^k \Tia(x) \Bia(x)
\een 
holds for all $x \in M$. Indeed, since $\mWa$ satisfies  \eqref{defWa} and \eqref{errWa2}, Propositions \ref{nonlin1} and \ref{nonlin2} both apply. On the one side, Proposition \ref{nonlin2} states, up to passing to a subsequence, the existence of $C>0$, of a sequence $(\sa)_\alpha$ satisfying \eqref{defsigmaa} and of a unique 
\be
\begin{aligned}
 \check{\vp}_\alpha \in K_\alpha^{\perp} \cap \Big \{ \vp \in C^0(M), \quad &|\vp(x)| \le C(\vea + \sa)^{\frac12} \sum_{i=0}^k \Bia(x) \\
 & + C \sum_{i=1}^k \Tia(x) \Bia(x) \quad \textrm{ for all } x \in M \Big \} 
 \end{aligned}
 \ee
that solves the equation
\ben  \label{thC02}
 \Pi_{K_\alpha^{\perp}} \Bigg( \mWa +  \check{\vp}_\alpha - (\triangle_g + 1)^{-1} \Big[ (1 - h_\alpha)(\mWa +  \check{\vp}_\alpha) + f\big( \mWa +  \check{\vp}_\alpha \big)  \Big] \Bigg) = 0
 \een
 and satisfies in addition $\Vert \check{\vp}_\alpha \Vert_{H^1(M)} \to 0$. On the other side, for some fixed $\delta >0$ small enough, Proposition \ref{nonlin1} states the existence of a unique $\hat{\vp}_\alpha \in K_\alpha^{\perp} \cap \{ \vp \in H^1(M), \Vert \vp \Vert_{H^1(M)} \le \delta \}$ satisfying \eqref{thC02} up to a subsequence. Since $\Vert \vpa \Vert_{H^1(M)} \to 0$ as $\alpha \to + \infty$ by assumption we therefore have $\vpa = \hat{\vp}_\alpha$ and $\check{\vp}_\alpha = \hat{\vp}_\alpha$. Hence $\vpa = \check{\vp}_\alpha $ and $\vpa$ satisfies \eqref{C0ps1}.

\medskip

We now prove \eqref{contreste}. We let $\ua = \mWa + \vpa$ and we will prove that, up to passing to a subsequence for $(\xia)_\alpha, (\mia)_\alpha$, $1 \le i \le k$, $(h_\alpha)_\alpha$, $(\vea)_\alpha$ and $(\vpa)_\alpha$, we have
\[ \Bigg | \Bigg| \frac{\ua - u_0 - \sum_{i=1}^k \Via}{\sum_{i=0}^k \Bia} \Bigg| \Bigg|_{L^\infty(M)} \to 0 \]
as $\alpha \to + \infty$. We proceed by contradiction and assume, up to passing to a subsequence, that there exists a sequence $(\xa)_\alpha$ of points of $M$ and $\eta_0 >0$ such that
\ben \label{C0ps11}
 \Big| \ua(\xa) - u_0(\xa) - \sum_{i=1}^k \Via(\xa)\Big|  \ge \eta_0 \sum_{i=0}^k \Bia(\xa). 
 \een
We still assume, up to renumbering the bubbles, that $\mu_{1, \alpha} \ge \mu_{2, \alpha} \ge \dots \ge \mu_{k,\alpha}$. By \eqref{defWa} and \eqref{C0ps1} such a sequence $(\xa)_\alpha$ then satisfies
\ben \label{locxa}
\frac{1}{\mu_{1, \alpha}} \min_{i=1\dots k} d_g(\xia, \xa) \to + \infty 
 \een
as $\alpha \to + \infty$.  By definition of $\vpa$  there exist real numbers $\lija$ such that $\vpa$ satisfies
\ben \label{eqvpabis}
 \big(\triangle_g + h_\alpha - f'(\mWa) \big)\vpa = \tilde{R}_\alpha + N_\alpha(\vpa) +  \sum \limits_{\underset{j =1 .. n_i }{i=1..k}} \lija ( \triangle_g   + 1)Z_{i,j, \alpha} 
 \een
 where $\tilde{R}_\alpha$ and $N_\alpha(\vpa)$ are as in \eqref{Rapreuve}. By \eqref{errWa2}, $\Vert \tilde{R}_\alpha\Vert_{H^1(M)} \le C( \vea + \sa)$ for some $C >0$. Since $\vpa = \check{\vp}_\alpha$ we have $\Vert \vpa \Vert_{H^1(M)} \le C(\vea + \sa)$ by \eqref{normereste}. Integrating \eqref{eqvpabis} against each $Z_{i,j,\alpha}$ and using \eqref{orthoZij} yields: 
\ben \label{C0ps2}
 |\lija| \le C( \vea + \sa)
 \een
for $0 \le i \le k$ and $1 \le j \le n_j$. Now, $\ua$ satisfies
\[ \triangle_g \ua + h_\alpha \ua = |\ua|^{2^*-2} \ua +  \sum \limits_{\underset{j =1 .. n_i }{i=1..k}} \lija ( \triangle_g   + 1)Z_{i,j, \alpha} . \]
Let $G_h$ be the Green's function of $\triangle_g + h$. Since $\Vert h_\alpha - h \Vert_{L^\infty(M)} \to 0$ as $\alpha \to + \infty$ a representation formula for $\triangle_g +h$ together with \eqref{C0ps2} yields, with \eqref{greenB}:
\ben \label{C0ps3}
\bal
 \ua(\xa) - \Pi_{K_h}(\ua)(\xa)= \int_M G_h(\xa, \cdot) |\ua|^{2^*-2} \ua dv_g \\
 + O \Big( (\vea + \sa) \sum_{i=0}^k \Bia(\xa) \Big)
 \eal
 \een
where we have let $K_h = \ker(\triangle_g + h)$. Assume first that $B_{0,\alpha} \equiv 0$, that is $u_0 \equiv 0$ and $\ker(\triangle_g +h) = \{0\}$. Then $\Pi_{K_h}(\ua) \equiv 0$ and, as a consequence of \eqref{C0ps1} and using \eqref{decbulle}, we have 
 \[  \int_{ M \backslash \bigcup_{i=1}^k   B_g(\xja, \frac{\ria}{R_0}) } G_h(x_\alpha, \cdot)^{2-n} |\ua|^{2^*-2} \ua dv_g  = O\Big( \sa \sum_{i=1}^k \Bia(x_\alpha) \Big).\]
 Assume now that $B_{0,\alpha}  \equiv 1$. Then, with \eqref{C0ps1} we have that 
 \[ \Pi_{K_h}(\ua)(\xa) = \Pi_{K_h}(u_0)(\xa) +O\Big( \sa \sum_{i=1}^k \Bia(x_\alpha) \Big). \]
On $M \backslash  \bigcup_{i=1}^k   B_g(\xja, \frac{\ria}{R_0}) $ we have by \eqref{defWa}, \eqref{proplin281}  and \eqref{C0ps1} that
 \[ \Big|  |\ua|^{2^*-2} \ua  -  |u_0|^{2^*-2} u_0 \Big| \lesssim \sum_{i=1}^k \Bia + \sum_{i=1}^k \Bia^{2^*-1} + \sa B_{0,\alpha} \]
for some sequence $(\sa)_\alpha$ satisfying \eqref{defsigmaa}. Since $B_{0,\alpha} \equiv 1$ we have $\ria \to 0$ for any $i \in \{1, \dots, k\}$ by definition, so using a representation formula for \eqref{limiteq}, \eqref{proplin281} and \eqref{greenB} below gives 
 \[ \bal
  \int_{ M \backslash \bigcup_{i=1}^k   B_g(\xja, \frac{\ria}{R_0}) } G_h(x_\alpha, \cdot)^{2-n} |\ua|^{2^*-2} \ua dv_g & = u_0(\xa) -  \Pi_{K_h}(u_0)(\xa) \\
  &+ O\Big( \sa \sum_{i=0}^k \Bia(x_\alpha) \Big).
  \eal \]
 Independently, if $i \in \{1, \dots, k\}$ we have, mimicking the proof of \eqref{proplin6}:
 \[
\bal  \int_{ \bigcup_{j \in \mathcal{B}_i}  \Big( B_g(\xja, \rjia) \backslash \cup_{s \in \mathcal{B}_i} B_g(x_{s, \alpha}, \frac{r_{s,\alpha}}{R_0}) \Big) } &G_h(x_\alpha, \cdot)^{2-n} |\ua|^{2^*-2} \ua dv_g  \\
& = O\Big( \sa \sum_{i=1}^k \Bia(x_\alpha) \Big).
\eal \]
By \eqref{defR0}, $B_g(\xia, \frac{\ria}{R_0})$ and $B_g(\xja, \frac{\rja}{R_0})$ are disjoint if $\mia \asymp \mja$. If $\mja = o(\mia)$ and $j \in \mathcal{B}_i$ then, since $\rjia \ge \sjia$ (see \eqref{defrhoia}) and $R_0 >2$, $\Oia$ and $B_g(\xja, \frac{\rja}{R_0})$ have empty intersection. Finally, if $\mja = o(\mia)$ and $ d_g(\xja, \xia)> \frac{2 \ria}{R_0}$ we have $\rja = o( d_g(\xia, \xja))$ by \eqref{defri}. Hence $B_g(\xia, \frac{\ria}{R_0})$ and $B_g(\xja, \frac{\rja}{R_0})$ are disjoint. This shows that the sets $\Oia$ defined in \eqref{defOia} are pairwise disjoint. Coming back to \eqref{C0ps3} we have thus shown that 
\ben \label{theorie0}
\bal
 \ua(x_\alpha) = u_0(\xa) +  \sum_{i=1}^k \int_{\Oia} G_h(x_\alpha, \cdot) |\ua|^{2^*-2} \ua dv_g \\
 + O \Big((\vea + \sa) \sum_{i=0}^k \Bia(x_\alpha) 
\Big).
\eal
 \een
Let $R >0$ and let $i \in \{1, \dots, k \}$. Let $y \in B(0,R) \cap \hat{\Omega}_{i,\alpha}$, where $\hat{\Omega}_{i,\alpha}$ is as in \eqref{defhOia}. By \eqref{locxa} we have
\[ (n-2) \omega_{n-1} d_g(\xia, x_\alpha)^{n-2} G_h \big(x_\alpha, \exp_{\xia}(\mia y) \big) \to 1 \quad \textrm{ if } d_g(\xia, x_\alpha) \to 0 \]
and
\[ G_h \big(x_\alpha, \exp_{\xia}(\mia y) \big) = G_h(x_\alpha, \xia) + o(1) \quad \textrm{ if } d_g(\xia, x_\alpha) \not \to 0\]
as $\alpha \to + \infty$. Since 
\[ (n-2) \omega_{n-1} d_g(\xia, x_\alpha)^{n-2} G_h(\xia, \xa) \to 1 \]
as $\alpha \to + \infty$ if $d_g(\xia, x_\alpha) \to 0 $ the dominated convergence theorem gives 
\ben \label{theorie3} \begin{aligned}
&\int_{\Oia \cap B_g(\xia, R\mia)}G_h(x_\alpha, \cdot) |\ua|^{2^*-2} \ua dv_g \\
& = (1+o(1)) G_h(x_\alpha, \xia) \mia^{\pui} \int_{B(0,R)} |V_i|^{2^*-2} V_i dx \\
& = (1+o(1)) G_h(x_\alpha, \xia)(n-2)\omega_{n-1} \mia^{\pui} \big( \lambda_i + O(R^{-2}) \big) \\
& =  \Via(x_\alpha) + o(\Bia(x_\alpha)) + O(R^{-2} \Bia(x_\alpha))
\end{aligned} \een
where the third line follows from \eqref{caracl} and the last one by \eqref{Via}. Independently, we have with \eqref{calcul4}, \eqref{C0ps1} and \eqref{decbulle} that 
\[ \Big| \int_{\Oia \backslash B_g(\xia, R\mia)}G_h(x_\alpha, \cdot) |\ua|^{2^*-2} \ua dv_g  \Big| \lesssim R^{-2} \Bia(\xa). \]
The last two lines together with \eqref{theorie0} show that, for any fixed $R >0$ and for any $\alpha$ large enough we have 
\[ \ua(x_\alpha) = u_0(\xa) +   \sum_{i=1}^k \Via(x_\alpha) + o\Big(\sum_{i=0}^k \Bia(x_\alpha)\Big) + O\Big(R^{-2} \sum_{i=1}^k  \Bia(x_\alpha) \Big). \]
Up to picking $R$ large enough and passing again to a subsequence this contradicts \eqref{C0ps11} and concludes the proof of Theorem \ref{theopsC0}.
\end{proof}

\subsection{Proof of Theorem \ref{theorieC0}}

In this last subsection we prove Theorem \ref{theorieC0}. We start with a simple consideration. Let $(M,g)$ be a closed manifold of dimension $n \ge 3$, let $(h_\alpha)_\alpha$ be a sequence of functions that converges in $L^\infty(M)$ towards $h$ as $\alpha \to + \infty$, $k \ge 1$ be an integer, $u_0 \in C^{1,\beta}(M)$ for $0 < \beta < 1$ be a solution of \eqref{limiteq}, $(\xia)_{\alpha}$ and $(\mia)_\alpha$, $1 \le i \le k$ be $k$ sequences satisfying \eqref{structure} and let $V_1, \dots, V_k $ in $D^{1,2}(\R^n)$ be $k$ solutions of \eqref{yamabe}.

\medskip

Let $(\vea)_\alpha \in \mathcal{E}$ and let  $c_{i,j,\alpha}$, $0 \le i \le k$, $1 \le i \le n_i$, be real numbers satisfying
\[ \sum_{i=0}^k\sum_{j=1}^{n_i} | c_{i,j,\alpha}| \le \vea. \]
We define, for any $\alpha \ge 1$, 
\ben \label{nouvdef}
\bal
u_{0, \alpha} & = 
u_0 + \sum_{j=1}^{n_0} c_{0,j,\alpha} Z_{0,j} \\\ 
W_{i,\alpha} & =  \Via + \sum_{j=1}^{n_i} c_{i,j,\alpha} Z_{i,j,\alpha}.
\eal
\een
Here $\Via$ is given by \eqref{Via}, $(Z_{i,j,\alpha})_{1 \le j \le n_i}$ for $1 \le i \le k$ are given by \eqref{defZij} and span $K_{i,\alpha}$ and $(Z_{0,j})_{1 \le j \le n_0}$ span $K_0$ (see \eqref{defK0} and \eqref{defKa}). If $u_0 \equiv 0$ and $\ker(\triangle_g +h) = \{0\}$ then $u_{0, \alpha} \equiv 0$. We let 
\[
\mathcal{W}_\alpha = u_{0,\alpha} + \sum_{i=1}^k W_{i,\alpha}.
\]
By \eqref{estV}, \eqref{nouvdef} and since $\vea \to 0$, this $\mWa$ satisfies \eqref{defWa}. 
As the following lemma shows it also satisfies \eqref{errWa2}: 

\begin{lemme} \label{lemmeerreur}
There exists a constant $C$ only depending on $(M,g),n,V_1, \dots, V_k$ and $u_0$ such that
\be
\bal
\Big| \triangle_g \mWa + h_\alpha \mWa &- |\mWa|^{2^*-2} \mWa \Big| \le C(\vea+\Vert h_\alpha - h \Vert_{L^\infty(M)}) B_{0,\alpha} \\
&+C  \sum_{i=1}^k \Bia + C  \sum \limits_{\underset{i \neq j}{i,j=0..k}}
 \Bia^{2^*-2} \Bja + C\vea \sum_{i=1}^k \Bia^{2^*-1}
\eal
\ee
in $M$. 
\end{lemme}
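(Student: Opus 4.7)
The plan is to decompose the error into linear contributions and a nonlinear interaction term, and to treat the perturbation by $\sum_{i,j} c_{i,j,\alpha} Z_{i,j,\alpha}$ separately. Write $\tilde{\mathcal{W}}_\alpha = u_0 + \sum_{i=1}^k V_{i,\alpha}$ and $\mathcal{W}_\alpha - \tilde{\mathcal{W}}_\alpha = \sum_{j=1}^{n_0} c_{0,j,\alpha} Z_{0,j} + \sum_{i=1}^k \sum_{j=1}^{n_i} c_{i,j,\alpha} Z_{i,j,\alpha}$. Then split
\[
\triangle_g \mathcal{W}_\alpha + h_\alpha \mathcal{W}_\alpha - f(\mathcal{W}_\alpha) = A_\alpha + B_\alpha + C_\alpha
\]
where $A_\alpha = \triangle_g \tilde{\mathcal{W}}_\alpha + h_\alpha \tilde{\mathcal{W}}_\alpha - f(\tilde{\mathcal{W}}_\alpha)$, $C_\alpha = (\triangle_g + h_\alpha)(\mathcal{W}_\alpha - \tilde{\mathcal{W}}_\alpha)$, and $B_\alpha = f(\tilde{\mathcal{W}}_\alpha) - f(\mathcal{W}_\alpha)$. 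Each term will be estimated separately.

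For $A_\alpha$, I would use \eqref{limiteq} to write $\triangle_g u_0 + h_\alpha u_0 = f(u_0) + (h_\alpha - h) u_0$ and \eqref{errVxa} to write $\triangle_g V_{i,\alpha} + h_\alpha V_{i,\alpha} = f(V_{i,\alpha}) + O(B_{i,\alpha})$. Thus
\[
A_\alpha = f(u_0) + \sum_{i=1}^k f(V_{i,\alpha}) - f\Big( u_0 + \sum_{i=1}^k V_{i,\alpha} \Big) + (h_\alpha - h) u_0 + O\Big( \sum_{i=1}^k B_{i,\alpha} \Big).
\]
The algebraic identity $|f(a+b)-f(a)-f(b)| \lesssim |a|^{2^*-2}|b| + |b|^{2^*-2}|a|$, applied iteratively, together with $|V_{i,\alpha}| \lesssim B_{i,\alpha}$ and $|u_0| \lesssim B_{0,\alpha}$ in $\{B_{0,\alpha} \equiv 1\}$, reduces $f(u_0)+\sum_i f(V_{i,\alpha})-f(\tilde{\mathcal{W}}_\alpha)$ to a sum of $O(B_{i,\alpha}^{2^*-2}B_{j,\alpha})$ with $i\neq j$. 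The term $(h_\alpha-h)u_0$ is bounded by $\|h_\alpha-h\|_{L^\infty(M)} B_{0,\alpha}$, giving the required control on $A_\alpha$.

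For $C_\alpha$, the kernel element $Z_{0,j}$ satisfies $(\triangle_g + h) Z_{0,j} = (2^*-1)|u_0|^{2^*-2} Z_{0,j}$, hence $(\triangle_g + h_\alpha) Z_{0,j} = O(1) + O(\|h_\alpha-h\|_{L^\infty(M)})$, which multiplied by $c_{0,j,\alpha}=O(\varepsilon_\alpha)$ yields a contribution of the form $\varepsilon_\alpha B_{0,\alpha}$ (using $B_{0,\alpha} \equiv 1$ whenever $n_0 > 0$). For $Z_{i,j,\alpha}$ with $i \geq 1$, using \eqref{defZij}, the linearised equation $\triangle_\xi Z_{i,j} = (2^*-1)|V_i|^{2^*-2} Z_{i,j}$ satisfied by $Z_{i,j}$ in $\R^n$, the decay estimate \eqref{estZ}, and standard computations comparing $g$ to the Euclidean metric in exponential coordinates, one obtains pointwise $|(\triangle_g + h_\alpha) Z_{i,j,\alpha}| \lesssim B_{i,\alpha}^{2^*-1} + B_{i,\alpha}$. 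Multiplying by $|c_{i,j,\alpha}| \leq \varepsilon_\alpha$ and summing gives the $\varepsilon_\alpha \sum B_{i,\alpha}^{2^*-1}$ and $\varepsilon_\alpha \sum B_{i,\alpha}$ contributions.

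For $B_\alpha$, the mean value theorem gives $|B_\alpha| \lesssim (|\tilde{\mathcal{W}}_\alpha|^{2^*-2} + |\mathcal{W}_\alpha - \tilde{\mathcal{W}}_\alpha|^{2^*-2})|\mathcal{W}_\alpha - \tilde{\mathcal{W}}_\alpha|$. Since $|Z_{0,j}| \lesssim 1 \lesssim B_{0,\alpha}$ and $|Z_{i,j,\alpha}| \lesssim B_{i,\alpha}$ by \eqref{estZ} and \eqref{defZij}, and $|c_{i,j,\alpha}| \leq \varepsilon_\alpha$, this is bounded by $\varepsilon_\alpha \big(\sum_{i=0}^k B_{i,\alpha}\big)^{2^*-1}$, which expands into $\varepsilon_\alpha \sum_i B_{i,\alpha}^{2^*-1}$ plus crossed terms $\sum_{i\neq j} B_{i,\alpha}^{2^*-2} B_{j,\alpha}$, absorbed in the stated upper bound. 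The main obstacle is the bookkeeping of cross-interaction terms in the expansion of $f(\tilde{\mathcal{W}}_\alpha)$ and in comparing $(\triangle_g + h_\alpha) Z_{i,j,\alpha}$ to the Euclidean linearised equation, but these are essentially straightforward given the explicit pointwise decay \eqref{estV}--\eqref{estZ}.
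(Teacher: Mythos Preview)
Your proof is correct and follows essentially the same strategy as the paper: split off $\tilde{\mathcal{W}}_\alpha = u_0 + \sum_{i=1}^k V_{i,\alpha}$, estimate $(\triangle_g+h_\alpha)(\mathcal{W}_\alpha-\tilde{\mathcal{W}}_\alpha)$ via the pointwise bounds $|\triangle_g Z_{i,j,\alpha}|\lesssim B_{i,\alpha}^{2^*-1}+B_{i,\alpha}$, estimate $f(\mathcal{W}_\alpha)-f(\tilde{\mathcal{W}}_\alpha)$ by the mean-value inequality, and control $\triangle_g\tilde{\mathcal{W}}_\alpha+h_\alpha\tilde{\mathcal{W}}_\alpha-f(\tilde{\mathcal{W}}_\alpha)$ using \eqref{errVxa}, \eqref{limiteq} and the expansion of $f\big(u_0+\sum_i V_{i,\alpha}\big)-f(u_0)-\sum_i f(V_{i,\alpha})$ into cross terms. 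The only cosmetic difference is that the paper absorbs $\varepsilon_\alpha B_{i,\alpha}$ into $B_{i,\alpha}$ directly (using $\varepsilon_\alpha\le 1$) and writes $(\sum_i B_{i,\alpha})^{2^*-1}\lesssim \sum_i B_{i,\alpha}^{2^*-1}$ without separating out the cross terms, but the content is the same.
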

\begin{proof}
Let $\tilde{\mWa} = u_0 + \sum_{i=1}^k V_{i,\alpha}$. Using \eqref{defZij} we have 
\[ |\triangle_g Z_{j,m,\alpha}| \lesssim \Bja^{2^*-1} + \Bja \]
for $1 \le j \le k$ and $1 \le m \le n_j$, while if $j=0$ we have, for all $1 \le m \le n_0$, $|(\triangle_g +1)Z_{0,m,\alpha}| \lesssim 1$. With  \eqref{defWa} and \eqref{nouvdef} we then have, since $\vea \le 1$
\[ \big| (\triangle_g + h_\alpha)\mWa -  (\triangle_g + h_\alpha)\tilde{\mWa} \big| \lesssim \vea B_{0,\alpha} + \sum_{j=1}^k B_{j,\alpha} + \vea \sum_{j=1}^k B_{j,\alpha}^{2^*-1}.  \]
Independently, and since $\vea \le 1$,
\[ \big| |\mWa|^{2^*-2} \mWa -  |\tilde{\mWa}|^{2^*-2} \tilde{\mWa} \big| \lesssim \vea B_{0,\alpha} + \vea \sum_{j=1}^k B_{j,\alpha}^{2^*-1}.\]
Finally, \eqref{contVa} and \eqref{errVxa} show that 
\[ \begin{aligned}
 \Big| \triangle_g \tilde{\mWa} + h_\alpha \tilde{\mWa} - |\tilde{\mWa}|^{2^*-2} \mWa \Big| & \lesssim \Vert h_\alpha - h \Vert_{L^\infty(M)} B_{0,\alpha} \\
 +   \sum \limits_{\underset{i \neq j}{i,j=0..k}}\Bia^{2^*-2} \Bja + \sum_{j=1}^k B_{j,\alpha}. 
 \end{aligned}
 \]
 Gathering these estimates proves the lemma.
 \end{proof}

We can now conclude the proof of Theorem \ref{theorieC0}. 
 
 \begin{proof}[Proof of Theorem \ref{theorieC0}.]
Let $(M,g)$ be a closed Riemannian manifold of dimension $n \ge 3$ and $(h_\alpha)_\alpha$ be a sequence of functions that converges in $L^\infty(M)$ towards $h$ as $\alpha \to + \infty$. Let $(\ua)_\alpha$ be a sequence of (possibly sign-changing) solutions of 
\be
\triangle_g \ua + h_\alpha \ua = |\ua|^{2^*-2} \ua 
\ee
in $M$, satisfying \eqref{blowup}. Struwe's decomposition \eqref{struwe} shows that there exist $u_0 \in C^{1,\beta}(M)$ for some $0 < \beta < 1$ solving 
\[\triangle_g u_0 + h u_0 = |u_0|^{2^*-2} u_0, \]
and that there exist  $k \ge 1$, $k$ sequences $(\xia)_{\alpha}$, $1 \le i \le k$ and $(\mia)_\alpha$, $1 \le i \le k$ satisfying \eqref{structure} and $k$ solutions $V_1, \dots, V_k $ in $D^{1,2}(\R^n)$ of \eqref{yamabe} such that 
\be
 \ua = u_0 + \sum_{i=1}^k \Via + \Psi_\alpha, 
 \ee
where $\Vert \Psi_\alpha \Vert_{H^1(M)} \to 0$ as $\alpha \to + \infty$ and where $\Via = V_i^{\mia, \xia}$ is given by \eqref{Vxa}. Let
\[ \psi_\alpha^{\perp} = \Pi_{K_\alpha^{\perp}}(\Psi_\alpha)  \quad \textrm{ and } \quad \ \psi_\alpha^{||} = \Pi_{K_\alpha}(\Psi_\alpha), \] 
where $K_\alpha$ is given by \eqref{defKa}. By \eqref{orthoZij} the functions $Z_{i,j,\alpha}$, $0 \le i \le k$, $1 \le j \le n_i$ form an almost-orthonormal family of $K_\alpha$ for the $H^1(M)$ scalar product, and we therefore have
\ben \label{defcij}
 \psi_\alpha^{||} =  \sum \limits_{\underset{j =1 .. n_i }{i=0..k}} c_{i,j,\alpha} Z_{i,j,\alpha}
 \een
for some real numbers $c_{i,j,\alpha}$, $0 \le i \le k$, $1 \le i \le n_i$, satisfying
\[ \sum_{i=0}^k\sum_{j=1}^{n_i} | c_{i,j,\alpha}| \le C \Vert \Psi_\alpha \Vert_{H^1(M)}  \]
for some $C>0$ independent of $\alpha$. Let 
\ben \label{defea}
 \ve_\alpha = C \Vert \Psi_\alpha \Vert_{H^1(M)},
 \een
 so that $\vea \to 0$ as $\alpha \to + \infty$. We can then rewrite Struwe's decomposition as 
\be
 \ua = \mWa + \psa^{\perp}, 
 \ee
where $\psa^{\perp} \in K_\alpha^{\perp}$, $\mWa = u_{0,\alpha} + \sum_{i=1}^k W_{i,\alpha}$ and $u_{0,\alpha}$ and $\Wia$ are given by \eqref{nouvdef} with the value of $c_{i,j,\alpha}$ given by \eqref{defcij}. In particular, by Lemma \ref{lemmeerreur}, $\mWa$ satisfies \eqref{defWa} and \eqref{errWa2}. Since $\Vert \psa^{\perp}\Vert_{H^1(M)} \to 0$,  Theorem \ref{theopsC0} applies and concludes the proof. 
\end{proof}

\medskip

\begin{remark}
The proof of Theorem \ref{theopsC0} requires to prove the intermediate estimate \eqref{C0ps1}, which is not as precise as \eqref{contreste} at finite distances from the points $(\xia)_{1 \le i \le k}$. The additional terms $\Tia \Bia$ that have to be taken into account in Proposition \ref{nonlin2} come from the error $O(B_{\ma,\xa})$ in the right-hand side of \eqref{errVxa}. If $V$ is a sign-changing solution of \eqref{yamabe}, indeed, one only has in general
\[ |(\triangle_g  + h)V^{\ma, \xa} - |V^{\ma, \xa} |^{2^*-2} V^{\ma, \xa} | \lesssim B_{\ma, \xa} \quad \textrm{ in } M \]
where $V^{\ma,\xa}$ is given by \eqref{Vxa} and this error estimate does not seem to improve. This contrasts with what happens for positive solutions, where radiality ensures that
\[ \triangle_g B_{\ma,\xa} = B_{\ma, \xa}^{2^*-1} \]
at any order in conformal normal coordinates. 
\end{remark}

\begin{remark}
Theorem \ref{theorieC0} requires no assumption on $h$. If $\ker(\triangle_g + h) \neq \{0\}$ and $u_0 \equiv 0$ one expects $\ua$ to possess a non-zero projection on $\ker(\triangle_g +h)$. Under these assumptions we have $B_{0,\alpha} \equiv 1$ and this is the reason why $\ua - \sum_{i=1}^k \Via$ is estimated by $o \big( \sum_{i=0}^k \Bia \big)$ in this case.
\end{remark}

\begin{remark} \label{remprecision}
When all the bubbles $V_1, \dots, V_k$ appearing in the Struwe decomposition of $\ua$ are \emph{non-degenerate} solutions of \eqref{yamabe} it is likely that the estimates of Theorem \ref{theorieC0} can be improved. Sharper estimates in dimensions $n \ge 7$ for solutions of \eqref{eqha} looking like a tower of positive bubbles (with or without alternating signs) were for instance obtained in Premoselli \cite{Premoselli12} (sections $3$ and $4.2$).  If one of the $V_i$, $1 \le i \le k$, is degenerate then the possible component along the kernel in \eqref{defcij} seems difficult to control, and it is not clear whether one can expect better estimates than in Theorem \ref{theorieC0}. No examples of \emph{degenerate} solutions of \eqref{yamabe} are known, but their existence has not been ruled out yet. 
\end{remark}

\appendix

\section{Technical results} \label{technicalresults}

Throughout this appendix we use the notations of Section \ref{linear}. We let $(\mia)_{\alpha}$ and $(\xia)_\alpha$, $1 \le i \le k$, be sequences respectively of positive numbers and points of $M$ satisfying \eqref{structure} and we let $\Bia = B_{\mia, \xia}$ be given by \eqref{Ba}.

\subsection{Some estimates}

\begin{prop}
Let $n \ge 3$. The following estimates hold true: 

\begin{itemize}
\item Let $i, j \in\{1, \dots, k\}$, $i \neq j$. Then for any sequence $(\xa)_\alpha$ of points of $M$ we have 
\ben \label{greenB}
\int_M d_g(\xa,\cdot)^{2-n} \Bia dv_g \lesssim \Tia(\xa) \Bia(\xa)
\een
where $\Tia$ is as in \eqref{defTi}, and
\ben \label{greenBder}
\int_M d_g(\xa,\cdot)^{1-n} \Bia dv_g \lesssim \Xi_{i,\alpha}(\xa) \Bia(\xa) 
\een
where we have let, for $x \in M$,
\ben \label{defXia}
 \Xi_{i,\alpha}(x) =  \left \{ \begin{aligned} & \tia(x) \big| \ln \tia(x) \big| & \textrm{ if } n = 3 \\
&  \tia(x) & \textrm{ if } n \ge 4 \end{aligned} \right. ,
\een
and where $\tia$ is as in \eqref{defti}.

\item Let $i, j \in\{1, \dots, k\}$, $i \neq j$. There exists a sequence $(\sa)_\alpha$ as in \eqref{defsigmaa} such that for any sequence $(\xa)_\alpha$ of points in $M$ we have 
\ben \label{greenBij}
\int_M d_g(\xa,\cdot)^{2-n} \Bia^{2^*-2} \Bja dv_g \lesssim \sa \big( \Bia(\xa) + \Bja(\xa) \big)
\een
and
\ben \label{greenBijder}
\int_M d_g(\xa,\cdot)^{1-n} \Bia^{2^*-2} \Bja dv_g \lesssim \sa \big( \mia^\pui \tia(\xa)^{1-n}  + \mja^\pui \tja(\xa)^{1-n}  \big).
\een

\item  Let $i, j \in\{1, \dots, k\}$, $i \neq j$. There exists a sequence $(\sa)_\alpha$ as in \eqref{defsigmaa} such that 
\ben \label{gradij}
\int_M \mia^{\pui} \tia^{1-n} \mja^{\pui} \tja^{1-n} dv_g \lesssim \sa. 
\een

\item  Let $(\delta_\alpha)_\alpha$ be a sequence of positive numbers with $\delta_\alpha \ge \mia$. Then for any sequence $(\xa)_\alpha$ of points of $M$ we have 
\ben \label{decbulle}
\int_{M \backslash B_g(\xia, \delta_\alpha)} d_g(\xa, \cdot)^{2-n} \Bia^{2^*-1}dv_g \lesssim \left( \frac{\mia}{\delta_\alpha} \right)^2 \Bia(\xa). 
\een
In particular, if $\delta_\alpha \ge \ria$,
\[ \int_{M \backslash B_g(\xia, \delta_\alpha)} d_g(\xa, \cdot)^{2-n} \Bia^{2^*-1}dv_g \lesssim \sa \Bia(\xa) .\]
 \end{itemize}
In \eqref{greenB} -- \eqref{decbulle} the sequences $(\sa)_\alpha$ actually only depend on $(\mia)_\alpha$ and $(\xia)_\alpha$, $1 \le i \le k$. The notation ``$\;\lesssim\;$'' means ``$\;\le C \dots\;$'', where $C$ only depends on $n$ and $(M,g)$.

\end{prop}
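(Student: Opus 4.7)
\emph{Strategy.} My plan is to reduce all six estimates to essentially Euclidean calculations by the common rescaling $y=\mu_{i,\alpha}^{-1}\exp_{x_{i,\alpha}}^{-1}(\cdot)$, which converts $B_{i,\alpha}$ into the standard Euclidean bubble $\mu_{i,\alpha}^{1-n/2}(1+|y|^2/n(n-2))^{-(n-2)/2}$ and perturbs the pulled-back metric only by $O(\mu_{i,\alpha}^2|y|^2)$. After this rescaling each integrand is the product of Riesz kernels with explicit radially decaying weights, so I will invoke Giraud's lemma (the convolution identity for Riesz potentials on $\R^n$) to bound them. The cutoffs arising from working on $M$ rather than $\R^n$ will contribute only negligible error terms because the integrands decay fast enough outside the injectivity ball around $x_{i,\alpha}$.

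\emph{Single-bubble estimates \eqref{greenB}--\eqref{greenBder}.} For \eqref{greenB}, after rescaling and writing $z_\alpha=\mu_{i,\alpha}^{-1}\exp_{x_{i,\alpha}}^{-1}(x_\alpha)$, the integral will reduce up to harmless factors to $\mu_{i,\alpha}^{2-n/2}\int_{\R^n}|y-z_\alpha|^{2-n}(1+|y|^2)^{-(n-2)/2}\,dy$. I will then perform an explicit asymptotic analysis of this convolution: it is of order $|z_\alpha|^{2-n}$ and finite uniformly in $z_\alpha$ when $n\ge 5$; it has a logarithmic factor $|\ln(1+|z_\alpha|)|$ when $n=4$; it grows linearly in $|z_\alpha|$ when $n=3$. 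Converting back to $x_\alpha$ will produce precisely $\Theta_{i,\alpha}(x_\alpha)B_{i,\alpha}(x_\alpha)$ as defined in \eqref{defTi}. The gradient estimate \eqref{greenBder} will be handled identically with the exponent of the Riesz kernel lowered by one; this shifts the critical dimension by one and yields the function $\Xi_{i,\alpha}$ of \eqref{defXia}.

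\emph{Cross-bubble estimates \eqref{greenBij}--\eqref{gradij} and truncation \eqref{decbulle}.} For \eqref{greenBij} and \eqref{greenBijder} I will assume without loss of generality $\mu_{i,\alpha}\le\mu_{j,\alpha}$ and partition $M$ into three regions indexed by the interaction radius $s_{i,j,\alpha}$ from \eqref{defsij}: a ball $B_g(x_{i,\alpha},s_{i,j,\alpha})$, on which $B_{i,\alpha}^{2^*-2}$ is concentrated and $B_{j,\alpha}$ is essentially frozen at $\mu_{j,\alpha}^{(n-2)/2}s_{i,j,\alpha}^{2-n}$, so that \eqref{greenB} applies to the residual Newton integral; a ball $B_g(x_{j,\alpha},s_{i,j,\alpha})$ where the roles reverse and \eqref{greenB} applies to $B_{j,\alpha}$; and the complement, where both bubbles decay enough that a direct estimate suffices. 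The powers of $\mu_{i,\alpha}/s_{i,j,\alpha}$ extracted on each region will combine, thanks to \eqref{structure}, into a single sequence $\sigma_\alpha$ satisfying \eqref{defsigmaa}. The same three-region partition will handle \eqref{gradij} since $\mu_{i,\alpha}^{(n-2)/2}\theta_{i,\alpha}^{1-n}$ has controlled integrability at the scale $s_{i,j,\alpha}$. Finally, for \eqref{decbulle} I will dyadically decompose $M\setminus B_g(x_{i,\alpha},\delta_\alpha)$ into annuli of radius $\sim 2^m\delta_\alpha$, on which $B_{i,\alpha}^{2^*-1}\lesssim\mu_{i,\alpha}^{(n+2)/2}(2^m\delta_\alpha)^{-(n+2)}$ and the Newton kernel against $x_\alpha$ integrates to at most $(2^m\delta_\alpha)^2$; summing the resulting geometric series yields the claimed $(\mu_{i,\alpha}/\delta_\alpha)^2 B_{i,\alpha}(x_\alpha)$, after distinguishing the cases $d_g(x_\alpha,x_{i,\alpha})\lesssim\delta_\alpha$ and $\gtrsim\delta_\alpha$. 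The main obstacle throughout will be bookkeeping, specifically verifying that the leftover multiplicative factors in \eqref{greenBij} and \eqref{gradij} depend only on $(\mu_{i,\alpha})_\alpha$ and $(x_{i,\alpha})_\alpha$, so that they can be absorbed into a sequence of the restricted form \eqref{defsigmaa}; the integral estimates themselves are all instances of Giraud's lemma or of direct dyadic summation.
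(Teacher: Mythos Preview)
Your proposal is correct and follows exactly the approach the paper indicates: the paper's own proof consists of a single sentence stating that all six estimates follow from Giraud's lemma and the structure relation \eqref{structure}, referring the reader to Hebey \cite{HebeyZLAM}, Chapter~7, for the computations. Your rescaling to the Euclidean model, the region decomposition governed by $s_{i,j,\alpha}$, and the dyadic argument for \eqref{decbulle} are precisely the standard ingredients behind that citation; one minor slip is that for $n\ge 5$ the convolution $\int_{\R^n}|y-z_\alpha|^{2-n}(1+|y|)^{2-n}\,dy$ is of order $(1+|z_\alpha|)^{4-n}$, not $|z_\alpha|^{2-n}$, which is what actually matches $\Theta_{i,\alpha}B_{i,\alpha}$ after undoing the rescaling.
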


\begin{proof}
These estimates follow from applications of Giraud's lemma and from \eqref{structure} and are obtained by straightforward computations. They can be found for instance in Hebey \cite{HebeyZLAM}, Chapter $7$. 
\end{proof}

\subsection{Proof of \eqref{prnonlin1}}

The following lemma was used in the proof of Proposition \ref{nonlin2}:

\begin{lemme} \label{lemtec}
Let $0 < \tau \le 2^*-1$ be fixed. There exists a sequence $(\sa)_\alpha$ as in \eqref{defsigmaa}, but depending only on $(\mia)_\alpha$ and $(\xia)_\alpha$, $1 \le i \le k$,  such that for all $x \in M$
\[  \Big( \sum_{i=1}^k \Bia(x) \Big)^{2^*-1-\tau}  \Big( \sum_{i=1}^k \Tia(x) \Bia(x) \Big)^{\tau} \le \sa \Big(   \sum_{i=0}^k \Bia(x) +  \sum_{i=1}^k \Bia(x)^{2^*-1} \Big)   \]
holds.
\end{lemme}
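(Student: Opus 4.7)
The plan is to reduce the lemma to a uniform pointwise estimate of the form
\[ \Tia(x)\Bia(x) \le \sa \, \Bia(x)^\lambda \qquad \text{for all } x \in M, \]
for a well-chosen $\lambda \in (0,1)$ (depending on $n$ and $\tau$), and then to conclude by the subadditivity of finite sums together with the elementary bound $\Bia^p \le \Bia + \Bia^{2^*-1}$ valid for $p \in [1, 2^*-1]$.

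\textbf{Step 1.} I would first establish the pointwise estimate displayed above. In dimensions $n = 3, 4$, a direct computation using $\tia \ge \mia$ gives $\Tia \Bia \le C \mia^{1/2}$ (resp.\ $C\mia|\ln \mia|$) uniformly on $M$; interpolating this uniform smallness with the trivial bound $\Tia \Bia \le C \Bia$ (which follows from $\Tia$ being bounded) yields $\Tia \Bia \le \sa \Bia^\lambda$ for any $\lambda \in (0,1)$. For $n \ge 5$ one has $\Tia = \tia^2$ and
\[ \tia^2 \Bia^{1-\lambda} \le C \, \mia^{(n-2)(1-\lambda)/2} \, \tia^{\,2 - (n-2)(1-\lambda)}, \]
and I would distinguish according to the sign of $\theta := 2 - (n-2)(1-\lambda)$: bound $\tia^\theta$ by a constant when $\theta \ge 0$ and by $\mia^\theta$ when $\theta < 0$. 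Either way, $\tia^2 \Bia^{1-\lambda} = o(1)$ uniformly in $x$ as soon as $\lambda \in (\max(0, (n-6)/(n-2)), 1)$.

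\textbf{Step 2.} I would then choose $\lambda$ close enough to $1$ so that $p := 2^* - 1 - \tau(1-\lambda) \ge 1$. Since $\tau \le 2^*-1$, it suffices to take $\lambda \ge 1 - (2^*-2)/\tau$, and the algebraic identity $(n-2)/(n+2) > (n-6)/(n-2)$ (which holds for all $n \ge 3$) guarantees that this is compatible with the lower bound on $\lambda$ required by Step 1.

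\textbf{Step 3.} Finally I would assemble the bound using $\sum_{i=1}^k a_i \le k \max_i a_i$:
\[ \Big(\sum_{i=1}^k \Bia\Big)^{2^*-1-\tau} \Big(\sum_{i=1}^k \Tia \Bia\Big)^{\tau} \lesssim \sa^\tau (\max_i \Bia)^{\,2^*-1-\tau(1-\lambda)} = \sa^\tau (\max_i \Bia)^p. \]
Since $p \in [1, 2^*-1]$, one has $\Bia^p \le \Bia + \Bia^{2^*-1}$ (splitting into the cases $\Bia \le 1$ and $\Bia \ge 1$), and therefore $(\max_i \Bia)^p \le \sum_{i=0}^k \Bia + \sum_{i=1}^k \Bia^{2^*-1}$, which is the desired conclusion.

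The main obstacle is the pointwise estimate of Step 1 in large dimensions: at the peak $x=\xia$ one has $\Tia(\xia) \Bia(\xia) = \mia^{(6-n)/2}$, which \emph{diverges} for $n \ge 7$, so $\Tia \Bia$ cannot be made uniformly small. The trick of writing $\Tia \Bia \le \sa \Bia^\lambda$ with $\lambda < 1$ strictly is what allows one to absorb this divergence into a power of $\Bia$, and the algebraic compatibility between the lower bounds on $\lambda$ required respectively by Steps 1 and 2 is the decisive technical point.
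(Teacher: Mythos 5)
Your proof is correct, and it takes a slightly different route from the paper's. Both arguments ultimately rest on the same elementary observation (your Step 3): any power $\Bia^p$ with $p \in [1,2^*-1]$ is dominated by $\Bia + \Bia^{2^*-1}$. Where you differ is in how the smallness factor is produced. The paper splits $M$ at the scale $\sqrt{\mia}$ around each concentration point, which gives $\sum_i \Tia \Bia \le C\sqrt{\mu_{1,\alpha}}\big(\sum_i \Bia + 1\big)$, expands the product, and then treats the range $2^*-2 < \tau \le 2^*-1$ by a separate reduction (factoring out $(\sum_i \Tia\Bia)^{2^*-2}$ using the boundedness of $\Tia$). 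You instead prove the pointwise interpolation bound $\Tia\Bia \le \sa\,\Bia^{\lambda}$ for a single exponent $\lambda<1$, chosen so that the combined exponent $p = 2^*-1-\tau(1-\lambda)$ lands in $[1,2^*-1]$; this removes the case distinction in $\tau$ at the price of the dimension-dependent constraint $\lambda > (n-6)/(n-2)$, which is exactly what absorbs the divergence $\Tia(\xia)\Bia(\xia) = \mia^{(6-n)/2}$ at the peak when $n \ge 7$, and your check that $(n-2)/(n+2) > (n-6)/(n-2)$ correctly shows a single choice of $\lambda$ works for all admissible $\tau$. Both proofs yield a sequence $\sa$ which is a power of $\mu_{1,\alpha}$ (up to a logarithm when $n=4$), hence depending only on $(\mia)_\alpha$, as required. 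One cosmetic remark: in the application \eqref{prnonlin1} the first factor is $\sum_{i=0}^k \Bia$ rather than $\sum_{i=1}^k \Bia$; your argument extends immediately to that case since $B_{0,\alpha} \in \{0,1\}$ and the extra term $\sa^\tau(\max_i\Bia)^{\lambda\tau}$ is again controlled by $B_{0,\alpha} + \Bia^{2^*-1}$.
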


\begin{proof}
Let $x \in M$ and let  
\[ S_{\alpha,x} = \big \{ i \in \{1, \dots k\}, x \in B_g(\xia, \sqrt{\mia}) \big \}. \]
By \eqref{defTi} we have 
\[  \sum_{i \in S_{\alpha,x}} \Tia(x) \Bia(x) \le C\sqrt{\mu_{1, \alpha}} \sum_{i=1}^k \Bia(x)\]
for some $C = C(n) >0$. Again by \eqref{defTi}, we also have 
\[   \sum_{i \in \{1, \dots, k\} \backslash  S_{\alpha,x}} \Tia(x) \Bia(x) \le C\sqrt{\mu_{1, \alpha}}.\]
In the end, for all $x \in M$,
\[ \sum_{i =1}^k \Tia(x) \Bia(x) \le C\sqrt{\mu_{1, \alpha}} \sum_{i=1}^k \Bia(x) + C \sqrt{\mu_{1, \alpha}}, \]
so that 
\ben \label{lemtec1}
  \begin{aligned}
\Big( \sum_{i=0}^k \Bia(x) \Big)^{2^*-1-\tau} & \Big( \sum_{i=1}^k \Tia(x) \Bia(x) \Big)^{\tau} \\
& \le C\mu_{1, \alpha}^{\frac{\tau}{2}} \sum_{i=0}^k \Bia(x)^{2^*-1} +  C\mu_{1, \alpha}^{\frac{\tau}{2}}  \sum_{i=0}^k \Bia(x)^{2^*-1-\tau}  .
\end{aligned}
\een
Assume first that $0 < \tau \le 2^*-2$. Then $2^*-1-\tau \in [1, 2^*-1[$ and thus, for any $1 \le i \le k$,
\[ \begin{aligned}
\Bia(x)^{2^*-1-\tau} & \le C \left \{  \begin{aligned} & \Bia(x)^{2^*-1} & \textrm{ if } d_g(\xia, x) \le \sqrt{\mia} \\ & \Bia(x) &  \textrm{ if } d_g(\xia, x) > \sqrt{\mia}
\end{aligned} \right.  \\
 & \le C \big(\Bia(x)^{2^*-1} + \Bia(x) \big) 
 \end{aligned}
 \]
 for some $C = C(n) >0$. Since either $B_{0,\alpha} \equiv 0 $ or $B_{0,\alpha} \equiv 1 $, going back to \eqref{lemtec1} proves the lemma in this case with $\sa = C \mu_{1, \alpha}^{\frac{\tau}{2}}$. 
 
 Assume now that $2^*-2 < \tau \le 2^*-1$. By \eqref{defTi} there exists $C = C(n) >0$ such that $\Tia(x) \le C$ for all $1 \le i \le k$ and all $x \in M$. We can then write that 
 \[
  \begin{aligned}
& \Big( \sum_{i=0}^k \Bia(x) \Big)^{2^*-1-\tau}  \Big( \sum_{i=1}^k \Tia(x) \Bia(x) \Big)^{\tau} \\
&  = \Big( \sum_{i=0}^k \Bia(x) \Big)^{2^*-1-\tau}   \Big( \sum_{i=1}^k \Tia(x) \Bia(x) \Big)^{\tau - (2^*-2)} \Big( \sum_{i=1}^k \Tia(x) \Bia(x) \Big)^{2^*-2} \\
& \le C   \Big( \sum_{i=0}^k \Bia(x) \Big)\Big( \sum_{i=1}^k \Tia(x) \Bia(x) \Big)^{2^*-2},
\end{aligned}
\]
and the proof follows from the $\tau = 2^*-2$ case proven above.
\end{proof}

\bibliographystyle{amsplain}
\bibliography{biblio}

\end{document}